\newtheorem{theorem}{Theorem}[section]
\newtheorem{lemma}{Lemma}[section]
\newtheorem{corollary}{Corollary}[section]
\newtheorem{remark}{Remark}[section]
\begin{document}
\title{Enhanced Error Estimates for Augmented Subspace Method with Crouzeix-Raviart Element}
\author{Zhijin Guan\footnote{LSEC, ICMSEC,
Academy of Mathematics and Systems Science, Chinese Academy of
Sciences,  Beijing 100190, China, and School of Mathematical
Sciences, University of Chinese Academy of Sciences, Beijing 100049, China.
({\tt guanzhijin@lsec.cc.ac.cn})},\ \ \
Yifan Wang\footnote{LSEC, ICMSEC,
Academy of Mathematics and Systems Science, Chinese Academy of
Sciences,  Beijing 100190, China, and School of Mathematical
Sciences, University of Chinese Academy of Sciences, Beijing 100049, China. ({\tt wangyifan@lsec.cc.ac.cn})},\ \ \ Hehu Xie\footnote{LSEC, ICMSEC,
Academy of Mathematics and Systems Science, Chinese Academy of
Sciences,  Beijing 100190, China, and School of Mathematical
Sciences, University of Chinese Academy of Sciences, Beijing 100049, China. ({\tt hhxie@lsec.cc.ac.cn})} \  \  and\  \ Chenguang Zhou\footnote{Department of Mathematics, Beijing University of Technology, Beijing 100124, China. (Corresponding author: {\tt zhoucg@bjut.edu.cn})} }
\date{}
\maketitle
\begin{abstract}
    In this paper, we present some enhanced error estimates for augmented subspace methods with the nonconforming Crouzeix-Raviart (CR) element. Before the novel estimates, we derive the explicit error estimates for the case of single eigenpair and multiple eigenpairs based on our defined spectral projection operators, respectively. Then we first strictly prove that the CR element based augmented subspace method exhibits the second-order convergence rate between the two steps of the augmented subspace iteration, which coincides with the practical experimental results. The algebraic error estimates of second order for the augmented subspace method explicitly elucidate the dependence of the convergence rate of the algebraic error on the coarse space, which provides new insights into the performance of the augmented subspace method. Numerical experiments are finally supplied to verify these new estimate results and the efficiency of our algorithms.

 \vskip0.3cm {\bf Keywords.}  eigenvalue problem, augmented subspace method, enhanced error estimate, Crouzeix-Raviart element.

\vskip0.2cm {\bf AMS subject classifications.} 65N30, 65N25, 65L15, 65B99.
\end{abstract}

\section{Introduction}
Solving large-scale eigenvalue problems is a basic and challenging task in the field of scientific and engineering computing. Compared with linear boundary value problems, it is always more difficult to solve eigenvalue problems because it requires more computational work and memory. In order to solve these large-scale sparse eigenvalue problems, Krylov subspace type methods (Implicitly Restarted Lanczos/Arnoldi Method (IRLM/IRAM) \cite{Sorensen}), Preconditioned INVerse ITeration (PINVIT) method \cite{BramblePasciakKnyazev,PINVIT,Knyazev}, Locally Optimal Block Preconditioned Conjugate Gradient (LOBPCG) method \cite{Knyazev_Lobpcg,KnyazevNeymeyr}, Jacobi-Davidson-type techniques \cite{Bai} and Generalized Conjugate Gradient Eigensolver (GCGE) \cite{LiXieXuYouZhang,ZhangLiXieXuYou} have been developed.
All these popular methods include the orthogonalization steps during computing Rayleigh-Ritz problems
which are always the bottlenecks for designing efficient parallel schemes
for determining relatively many eigenpairs.

As one of the effective methods to solve eigenvalue problems,
the two-grid method has been proposed and analyzed in \cite{XuZhou} for the linear eigenvalue problem. This algorithm requires solving a small-scale eigenvalue problem on a coarse mesh and a large-scale linear boundary value problem on a fine mesh. When the mesh sizes of the coarse mesh ($H$) and the fine mesh ($h$) have an appropriate proportional relation ($H=\sqrt{h}$), the optimal error estimate for the approximate solution can be derived. However, owing to the strict constraints on the ratio (i.e., $H=\sqrt{h}$), the two-grid method only performs on two mesh levels and can not be used to design the eigensolver for the algebraic eigenvalue problems which come from the finite element discretization of the eigenvalue problems of the differential operators.

Recently, a type of augmented subspace methods and their multilevel correction methods is proposed for solving eigenvalue problems in \cite{full,HongXieXu,LinXie_MultiLevel,Xie_IMA,Xie_JCP,XieZhangOwhadi,XuXieZhang}.
In this type of methods, there exists an augmented subspace which is used in each correction step and constructed with the low dimensional finite element space defined on the coarse grid. The idea of augmented subspace gives birth to the type of
augmented subspace methods which only needs the low dimension finite element space on the coarse mesh and the final finite element space on the finest mesh.
The augmented subspace methods can transform
the solution of the eigenvalue problem on the final level of mesh to the
solution of linear boundary value problems on the final level of mesh and the solution
of the eigenvalue problem on the low dimensional augmented subspace.
This type of methods can also work even the coarse and finest meshes have no
nested property \cite{Dang}.
Based on the augmented subspace methods, the multilevel correction methods give the
ways to construct the multigrid methods for eigenvalue problems \cite{full,HongXieXu,Xie_IMA,Xie_JCP,XieZhangOwhadi}, and also in \cite{XuXieZhang}, the authors design an eigenpair-wise parallel eigensolver
for the eigenvalue problems. This type of parallel method avoids doing orthogonalization and inner-products in the high dimensional space which accounts for a large portion of the wall time in the parallel computation.
The above references are mainly investigated based on conforming finite element methods, and in \cite{MR3434038, MR3395398}, the authors extend this type of methods to the case of the nonconforming finite element methods for the Laplace eigenvalue problem and the Steklov eigenvalue problem, respectively.

In \cite{MR3434038}, the author first illustrates the error estimate of the solution operator with respect to the eigenspace in Theorem 2.1. Then the augmented subspace method and its multilevel correction method is presented based on the nonconforming finite element method, and the algebraic error estimates are evaluated by utilizing the finest mesh size in Corollary 5.2, which is called ``superclose" in Remark 5.3. However, these derived algebraic error estimates only prove the first order convergence rate, while the second order algebraic error accuracy is obtained in the numerical experiments.

Based on above, in this paper, we provide some new and enhanced error estimates from the following two aspects:
\begin{itemize}
\item In the spatial discretization of CR element, we separately derive the explicit error estimates for the case of single eigenpair and multiple eigenpairs based on our defined spectral projection operators. These estimates depict the relationship between the errors of spectral projections of the eigenvalue problem and the errors of finite element projection of the corresponding linear boundary value problem with an explicit constant expression related to the eigenvalues and their gap.
\item For the first time, we strictly prove that the augmented subspace method based on the nonconforming CR element exhibits the second-order convergence rate between the two augmented subspace iteration steps, which coincides with the practical experimental results.
\end{itemize}
Our proposed algebraic error estimates are superior to the results in \cite{MR3434038}. More importantly, these algebraic error estimates of second order for the augmented subspace method explicitly elucidate the dependence of the convergence rate on the coarse space, which provides new insights into the performance of the augmented subspace method.

The overall structure of this paper goes as follows. In Section \ref{Section_2}, we introduce the nonconforming CR element method for the eigenvalue problem and derive new error estimates based on our defined spectral projection operators. The augmented subspace method and the algebraic error estimates of second order will be given in Section \ref{Section_3}. In Section \ref{Section_4}, numerical experiments are presented to validate the theoretical error estimates and the efficiency of our algorithms.
Some concluding remarks are provided in the last section.

\section{CR element discretization for the eigenvalue problem}\label{Section_2}

In our methodology description, we set the Laplace eigenvalue problem as an example. The framework of the theoretical analysis of the method can also be developed to other eigenvalue problems, for example, linear elasticity eigenvalue problems. Additionally, it should be noted that the letter $C$ (with or without subscripts) denotes a generic positive constant which may be different at its different occurrences throughout this paper.

Now, let us concern with the following Laplace eigenvalue problem: Find $(\lambda,u)\in\mathbb R\times H_0^1(\Omega)$, such that
\begin{eqnarray}\label{originproblem}
\left\{
\begin{array}{rcl}
-\Delta u&=& \lambda u,\ \ \ {\rm in}\ \Omega,\\
u&=&0,\ \ \ \ {\rm on}\ \partial\Omega,\\
|u|_1^2&=&1,
\end{array}
\right.
\end{eqnarray}
where $|\cdot|_1$ represents $H^1$-type semi-norm (cf. \cite{MR0450957}) and $\Omega\subset\mathbb R^2$ is a bounded and convex domain with Lipschitz boundary $\partial\Omega$. Let $V=H_0^1(\Omega)$ and $W=L^2(\Omega)$. Then the variational formulation of \eqref{originproblem} is provided as: Find $(\lambda,u)\in\mathbb R\times V$ such that $a(u,u)=1$ and
\begin{eqnarray}\label{weak_eigenvalue_problem}
a(u,v)=\lambda b(u,v),\quad \forall v\in V,
\end{eqnarray}
where $a(\cdot, \cdot)$ and $b(\cdot,\cdot)$ are defined as follows
\begin{eqnarray*}
a(u,v)=\int_{\Omega}\nabla u\cdot\nabla vd\Omega, &&
b(u,v)=\int_{\Omega}uvd\Omega.
\end{eqnarray*}
Based on the bilinear forms $a(\cdot,\cdot)$ and $b(\cdot,\cdot)$, we can respectively define the
norms on the spaces $V$ and $W$ as
\begin{eqnarray}\label{Nomr_a}
\left\|v\right\|_a = \sqrt{a(v,v)},\ \ \ \ \forall v\in V,
\end{eqnarray}
and
\begin{eqnarray}\label{Nomr_b}
\left\|w\right\|_b = \sqrt{b(w,w)},\ \ \ \ \forall w\in W.
\end{eqnarray}
We can find that the norms $\|\cdot\|_a$ and $\|\cdot\|_b$ defined in \eqref{Nomr_a} and \eqref{Nomr_b} are equivalent to the $H^1$-type semi-norm $|\cdot|_1$ and $L^2$ norm $\|\cdot\|_0$, respectively. It is well known that the eigenvalue problem (\ref{weak_eigenvalue_problem})
has an eigenvalue sequence $\{\lambda_j \}$ (cf. \cite{BabuskaOsborn_1989,Chatelin})
$$0<\lambda_1\leq \lambda_2\leq\cdots\leq \lambda_k\leq \cdots,\ \ \ \lim_{k\rightarrow\infty}\lambda_k=\infty.$$
And the associated eigenfunctions are provided as
$$u_1, u_2, \cdots, u_k, \cdots.$$
Here $a(u_i,u_j)=\delta_{ij}$ ($\delta_{ij}$ denotes the Kronecker function).

Let $\mathcal{T}_{h}$ be a quasi-uniform triangulation of $\Omega$. Denote by $\mathcal{E}_h$ the set of all edges of $\mathcal{T}_h$. $\mathcal{E}_h=\mathcal{E}_h^i\cup \mathcal{E}_h^b$, where $\mathcal{E}_h^i$ denotes the interior edge set and $\mathcal{E}_h^b$ denotes the edge set lying on the boundary $\partial\Omega$. The CR element space $V_h$ is defined as
\begin{align}\label{CR_Definition}
V_h:= &\Big\{v\in L^2(\Omega):v|_{K}\in
{\rm span}\{1,x,y\},\int_{\ell} v|_{K_1}{\rm d}s=\int_{\ell} v|_{K_2}{\rm d}s,\nonumber\\
&\hspace{0cm}\text{ when }K_1\cap K_2=\ell\in\mathcal{E}_h^i\
\text{and }\int_{\ell}v|_{K}{\rm d}s=0,  \text{ if }\ \ell\in\mathcal{E}_h^b\Big\},
\end{align}
where $K,\ K_1,\ K_2\in \mathcal{T}_h$.

The CR element approximation to \eqref{weak_eigenvalue_problem} is defined as follows: Find $(\bar\lambda_h, \bar u_h) \in \mathbb{R}\times V_h$ such that $a_h(\bar u_h,\bar u_h)=1$ and
\begin{eqnarray}\label{diseigenproblem}
a_h(\bar u_h,v_h)=\bar\lambda_h b(\bar u_h,v_h), \quad \forall v_h\in V_h,
\end{eqnarray}
where $a_h(\cdot,\cdot)$ is defined as
\begin{eqnarray*}
a_h(w_h,v_h)=\sum\limits_{K\in\mathcal{T}_h}\int_K\nabla w_h\nabla v_h{\rm d}K,\quad \forall w_h,v_h\in V_h.
\end{eqnarray*}

The bilinear form $a_h(\cdot,\cdot)$ is $V_h$-elliptic on $V+V_h$. Hence, we define the norms $\|\cdot\|_{a,h}$ and $\|\cdot\|_b$ on $V+V_h$ as
\begin{eqnarray}\label{ahb}
\|v\|_{a,h}^2=a_h(v,v),\ \ \|v\|_b^2=b(v,v), \ \ \ \text{for}\ v\in V+V_h.
\end{eqnarray}

For the eigenvalue problem \eqref{diseigenproblem}, the Rayleigh quotient holds for the eigenvalue $\bar\lambda_h$,
\begin{eqnarray*}\label{Dis_Rayleigh_quotient}
\bar\lambda_h=\frac{a_h(\bar u_h,\bar u_h)}{b(\bar u_h,\bar u_h)}.
\end{eqnarray*}

Similarly, the discrete eigenvalue problem \eqref{diseigenproblem} also has an eigenvalue sequence
$$0< \bar\lambda_{1,h}\leq \bar\lambda_{2,h}\leq\cdots\leq \bar\lambda_{N_h,h},$$
and the corresponding discrete eigenfunction sequence
$$\bar u_{1,h},\bar u_{2,h},\cdots,\bar u_{N_h,h},$$
with the property $a_h(\bar u_{i,h},\bar u_{j,h})=\delta_{ij}, 1\leq i,j\leq N_h$, where $N_h$ is the dimension of $V_h$.

In order to state the error estimate for the eigenpair approximation by the CR finite element method, we define the CR finite element projection $\mathcal P_h: V\mapsto V_h$ as follows
\begin{eqnarray}\label{Energy_Projection first}
a_h(\mathcal P_h u, v_h) = \lambda b(u,v_h),\ \ \ \ \forall v_h\in V_h.
\end{eqnarray}
It is obvious that the finite element projection operator $\mathcal P_h$ has the following error estimates.

\begin{lemma}[\cite{MR2373954,lin2012asymptotic}]\label{Lemma_Error_Estimate_CR}
Assume the source equation corresponding to the eigenvalue problem has $H^{2}(\Omega)$ regularity. Then the following error estimates hold
\begin{eqnarray}
\left\|u-\mathcal P_h u\right\|_{a,h} &\leq &C_1h\|u\|_{2},\label{Delta_V_h_P_h}\\
\left\|u-\mathcal P_h u\right\|_{b} &\leq& C_2h^{2}\|u\|_{2}.\label{Aubin_Nitsche_Estimate}
\end{eqnarray}
\end{lemma}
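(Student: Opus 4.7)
The plan is to combine the second Strang lemma for nonconforming methods with an Aubin--Nitsche duality argument, in both cases exploiting the defining mean-value property of the CR space. Since $u$ solves $-\Delta u = \lambda u$ pointwise, integrating $\lambda b(u,v_h)$ by parts element by element gives, for every $v_h \in V_h$,
\begin{align*}
\lambda b(u,v_h) = a_h(u,v_h) - \sum_{\ell \in \mathcal{E}_h} \int_{\ell} \frac{\partial u}{\partial n}\, J_\ell(v_h)\, ds,
\end{align*}
where $J_\ell(v_h)$ denotes the jump of $v_h$ across an interior edge $\ell$ and its trace on a boundary edge. Subtracting the definition \eqref{Energy_Projection first} of $\mathcal{P}_h u$ produces the Galerkin quasi-orthogonality
\begin{align*}
a_h(u - \mathcal{P}_h u, v_h) = \sum_{\ell \in \mathcal{E}_h} \int_{\ell} \frac{\partial u}{\partial n}\, J_\ell(v_h)\, ds.
\end{align*}

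Because the CR definition \eqref{CR_Definition} forces $\int_\ell J_\ell(v_h)\, ds = 0$ on every edge, I may subtract the edge average of $\partial u/\partial n$ inside each integral for free. A standard scaled trace inequality combined with a Bramble--Hilbert argument on each element then yields the consistency bound
\begin{align*}
\bigl| a_h(u - \mathcal{P}_h u, v_h) \bigr| \leq C h \|u\|_2 \|v_h\|_{a,h}, \quad \forall v_h \in V_h.
\end{align*}
Inserting this into the second Strang lemma and pairing it with the usual CR interpolation estimate $\|u - I_h u\|_{a,h} \leq C h \|u\|_2$ immediately delivers \eqref{Delta_V_h_P_h}.

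For \eqref{Aubin_Nitsche_Estimate} I would run an Aubin--Nitsche duality. Let $\psi \in V$ solve $-\Delta\psi = u - \mathcal{P}_h u$ in $\Omega$ with homogeneous Dirichlet data; convexity of $\Omega$ gives $\psi \in H^2(\Omega)$ with $\|\psi\|_2 \leq C\|u - \mathcal{P}_h u\|_b$. Multiplying by $u-\mathcal{P}_h u$, integrating by parts elementwise, and splitting $\psi = (\psi - I_h\psi) + I_h\psi$ lead to a representation of $\|u-\mathcal{P}_h u\|_b^2$ as the sum of three terms: an approximation term $a_h(u-\mathcal{P}_h u,\psi - I_h\psi)$, a dual-consistency term involving $\partial\psi/\partial n$ tested against $J_\ell(u-\mathcal{P}_h u)$, and the primal-consistency term obtained by testing the identity above with $v_h = I_h\psi$. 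Each factor is $O(h)$ times an $H^2$-seminorm, so the product is $O(h^2)\|u\|_2\|\psi\|_2$; absorbing $\|\psi\|_2$ on the left-hand side gives the $h^2$ rate.

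The main obstacle, and the only place one can lose the desired order, is the duality step: one must carefully show that both the primal nonconformity (arising through $a_h(u-\mathcal{P}_h u, I_h\psi)$) and the dual nonconformity (arising when $-\Delta\psi$ is integrated against the nonconforming error $u-\mathcal{P}_h u$) each contribute a full factor of $h$, so that their sum is genuinely $O(h^2)$ rather than the $O(h^{3/2})$ one would get from a careless trace estimate. The edge-mean property of the CR space is what makes both of these gains possible.
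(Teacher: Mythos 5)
The paper offers no proof of this lemma: it is quoted as a known result and attributed to \cite{MR2373954,lin2012asymptotic}, so there is nothing internal to compare against. Your argument is the standard one underlying those references --- second Strang lemma combined with the edge-mean-zero consistency estimate for \eqref{Delta_V_h_P_h}, and a nonconforming Aubin--Nitsche duality (using convexity of $\Omega$ for the $H^2$ regularity of the dual solution) for \eqref{Aubin_Nitsche_Estimate} --- and it is sound: you correctly identify the one delicate point, namely that both the primal and the dual consistency terms must each gain a full factor of $h$ through the CR mean-value property (with the dual term also invoking the already-established energy estimate to control $\|J_\ell(u-\mathcal P_h u)\|_{0,\ell}$), so that the product is genuinely $O(h^2)$.
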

Before stating error estimates of the CR finite element method for the eigenvalue problem,
we introduce the following lemma.
\begin{lemma}\label{Strang_Lemma}
For any eigenpair $(\lambda,u)$ of (\ref{weak_eigenvalue_problem}), the following equality holds
\begin{eqnarray*}\label{Strang_Equality}
(\bar\lambda_{j,h}-\lambda)b(\mathcal P_hu,\bar u_{j,h})
=\lambda b(u-\mathcal P_hu,\bar u_{j,h}),\ \ \ j = 1, \cdots, N_h.
\end{eqnarray*}
\end{lemma}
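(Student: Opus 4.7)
The plan is to derive the identity by a short direct manipulation that combines the defining equations of the projection $\mathcal{P}_h$, the discrete eigenpair $(\bar\lambda_{j,h}, \bar u_{j,h})$, and the symmetry of the bilinear form $a_h(\cdot,\cdot)$.

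First, I would substitute $v_h = \bar u_{j,h}$ into the definition \eqref{Energy_Projection first} of the projection $\mathcal{P}_h u$, which yields
\begin{equation*}
a_h(\mathcal{P}_h u, \bar u_{j,h}) = \lambda\, b(u, \bar u_{j,h}).
\end{equation*}
Next, I would substitute $v_h = \mathcal{P}_h u \in V_h$ into the discrete eigenvalue problem \eqref{diseigenproblem} for $\bar u_{j,h}$ to obtain
\begin{equation*}
a_h(\bar u_{j,h}, \mathcal{P}_h u) = \bar\lambda_{j,h}\, b(\bar u_{j,h}, \mathcal{P}_h u).
\end{equation*}
Since $a_h(\cdot,\cdot)$ and $b(\cdot,\cdot)$ are symmetric, the two left-hand sides are equal, giving
\begin{equation*}
\bar\lambda_{j,h}\, b(\mathcal{P}_h u, \bar u_{j,h}) = \lambda\, b(u, \bar u_{j,h}).
\end{equation*}

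Finally, I would subtract $\lambda\, b(\mathcal{P}_h u, \bar u_{j,h})$ from both sides and regroup, producing exactly
\begin{equation*}
(\bar\lambda_{j,h} - \lambda)\, b(\mathcal{P}_h u, \bar u_{j,h}) = \lambda\, b(u - \mathcal{P}_h u, \bar u_{j,h}),
\end{equation*}
which is the claimed identity. There is no substantive obstacle here: the lemma is a Galerkin-orthogonality-style manipulation, and the only point to note is that working with $\mathcal{P}_h u \in V_h$ as a test function is legitimate precisely because the projection is defined into the discrete space $V_h$, so using the broken form $a_h$ throughout causes no mismatch.
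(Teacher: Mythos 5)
Your proposal is correct and follows essentially the same route as the paper: both reduce the identity to $\bar\lambda_{j,h}\,b(\mathcal P_h u,\bar u_{j,h})=\lambda\,b(u,\bar u_{j,h})$ and establish it by testing the projection definition \eqref{Energy_Projection first} and the discrete eigenvalue problem \eqref{diseigenproblem} against each other via the symmetry of $a_h(\cdot,\cdot)$. No gaps.
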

\begin{proof}
Since $-\lambda b(\mathcal P_h u,\bar u_{j,h})$ appears on both sides, we only need to prove that
\begin{eqnarray*}
\bar\lambda_{j,h}b(\mathcal P_h u,\bar u_{j,h})=\lambda b(u,\bar u_{j,h}).
\end{eqnarray*}
From (\ref{weak_eigenvalue_problem}), (\ref{diseigenproblem}) and (\ref{Energy_Projection first}),
the following equalities hold
\begin{eqnarray*}
\bar\lambda_{j,h}b(\mathcal P_h u,\bar u_{j,h}) = a_h(\mathcal P_h u,\bar u_{j,h})= \lambda b(u,\bar u_{j,h}).
\end{eqnarray*}
Then the proof is completed.
\end{proof}

Now, let us consider the error estimates for the first
$k$ eigenpair approximations associated with $\bar\lambda_{1,h}\leq \cdots \leq \bar\lambda_{k,h}$.
For the following analysis in this paper, we define $\mu_i=1/\lambda_i$ for $i=1,2,\cdots$, and
$\bar\mu_{i, h} = 1/\bar\lambda_{i,h}$ for $i=1, \cdots, N_h$.

\begin{theorem}\label{Error_Estimate_Theorem_k}
Let us define the spectral projection $\bar F_{k,h}: V_h + V \mapsto  {\rm span}\{\bar u_{1,h}, \cdots, \bar u_{k,h}\}$
as follows
\begin{eqnarray}
a_h(\bar F_{k,h}w, \bar u_{i,h}) = a_h(w, \bar u_{i,h}), \ \ \ i=1, \cdots, k\ \ {\rm for}\ w\in V_h + V.
\end{eqnarray}
Then the associated exact eigenfunctions $u_1, \cdots, u_k$ of eigenvalue problem (\ref{weak_eigenvalue_problem}) have the following error estimates
\begin{eqnarray}\label{Theo Energy_Error_Estimate_k}
\left\|u_i - \bar F_{k,h} u_i\right\|_{a,h} \leq 2\|u_i-\mathcal P_h u_i\|_{a,h}
+ \frac{\sqrt{\bar\mu_{k+1,h}}}{\delta_{k,i,h}}\left\|u_i-\mathcal P_hu_i\right\|_{b}, \ \  1\leq i\leq k,
\end{eqnarray}
where $\delta_{k,i,h}$ is defined as follows
\begin{eqnarray}\label{Definition_Delta_k_i_0}
\delta_{k,i,h} = \min_{k<j\leq N_h}\left|\frac{1}{\bar\lambda_{j,h}}-\frac{1}{\lambda_i}\right|.
\end{eqnarray}
Furthermore, these $k$ exact eigenfunctions have the following error estimate in $\left\|\cdot\right\|_{b}$-norm
\begin{eqnarray}\label{Theo L2_Error_Estimate_k_0}
\left\|u_i-\bar F_{k,h} u_i\right\|_{b} \leq \left(2+\frac{\bar\mu_{k+1,h}}{\delta_{k,i,h}}\right)
\left\|u_i-\mathcal P_hu_i\right\|_{b}, \ \ \ 1\leq i\leq k.
\end{eqnarray}
\end{theorem}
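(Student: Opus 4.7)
The plan is to introduce the finite element projection $\mathcal P_h u_i$ as a bridge, writing
\begin{align*}
u_i - \bar F_{k,h} u_i = (u_i - \mathcal P_h u_i) + (\mathcal P_h u_i - \bar F_{k,h} u_i),
\end{align*}
and applying the triangle inequality. The first piece is directly controlled by $\|u_i-\mathcal P_h u_i\|_{a,h}$ and $\|u_i-\mathcal P_h u_i\|_b$, which are exactly the quantities appearing on the right-hand sides of the claimed estimates. The second piece lies in $V_h$, so it admits the expansion $\mathcal P_h u_i - \bar F_{k,h} u_i = \sum_{j=1}^{N_h}\gamma_j \bar u_{j,h}$ in the $a_h$-orthonormal basis of discrete eigenfunctions, with coefficients $\gamma_j = a_h(\mathcal P_h u_i - \bar F_{k,h} u_i,\bar u_{j,h})$. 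Using $b(\bar u_{j,h},\bar u_{l,h})=\bar\mu_{j,h}\delta_{jl}$, which follows from the discrete eigenvalue relation together with $a_h$-orthonormality, the two target norms take the Parseval form $\|\mathcal P_h u_i - \bar F_{k,h} u_i\|_{a,h}^2=\sum_j\gamma_j^2$ and $\|\mathcal P_h u_i - \bar F_{k,h} u_i\|_b^2=\sum_j\gamma_j^2\bar\mu_{j,h}$.

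For the tail indices $j>k$, the $a_h$-orthogonality of $\bar F_{k,h}u_i$ against $\bar u_{j,h}$ reduces $\gamma_j$ to $a_h(\mathcal P_h u_i,\bar u_{j,h})=\lambda_i b(u_i,\bar u_{j,h})$ by the definition of $\mathcal P_h$. Feeding this into the discrete eigenvalue relation $a_h(\mathcal P_h u_i,\bar u_{j,h})=\bar\lambda_{j,h}b(\mathcal P_h u_i,\bar u_{j,h})$ together with Lemma \ref{Strang_Lemma} and the algebraic identity $\bar\lambda_{j,h}\lambda_i/(\bar\lambda_{j,h}-\lambda_i) = -1/(\bar\mu_{j,h}-\mu_i)$ yields the clean formula
\begin{align*}
\gamma_j=\frac{-1}{\bar\mu_{j,h}-\mu_i}\,b(u_i-\mathcal P_h u_i,\bar u_{j,h}), \qquad j>k,
\end{align*}
whose denominator is controlled by the spectral gap $\delta_{k,i,h}$. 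The Bessel inequality $\sum_j\bar\lambda_{j,h}b(w,\bar u_{j,h})^2\le\|w\|_b^2$, valid for $w\in V+V_h$ because $\{\sqrt{\bar\lambda_{j,h}}\bar u_{j,h}\}_{j=1}^{N_h}$ is $b$-orthonormal in $V_h$, combined with $\bar\mu_{j,h}\le\bar\mu_{k+1,h}$ on the tail, then produces the $\sqrt{\bar\mu_{k+1,h}}/\delta_{k,i,h}$ contribution in the $a_h$ estimate and the $\bar\mu_{k+1,h}/\delta_{k,i,h}$ contribution in the $b$ estimate.

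For the head indices $j\le k$, the defining property of $\bar F_{k,h}$ gives $\gamma_j=-a_h(u_i-\mathcal P_h u_i,\bar u_{j,h})$; Bessel's inequality in the $a_h$-orthonormal system, applied to $u_i-\mathcal P_h u_i\in V+V_h$, then yields $\sum_{j\le k}\gamma_j^2\le\|u_i-\mathcal P_h u_i\|_{a,h}^2$. Combining the head and tail contributions with the triangle inequality $\|u_i-\bar F_{k,h}u_i\|_{a,h}\le\|u_i-\mathcal P_h u_i\|_{a,h}+\|\mathcal P_h u_i-\bar F_{k,h}u_i\|_{a,h}$ will deliver the leading factor $2$ in the $a_h$-norm estimate.

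The main obstacle I anticipate is the head block for the $b$-norm estimate, because the naive Cauchy--Schwarz bound $|\gamma_j|\le\|u_i-\mathcal P_h u_i\|_{a,h}$ is one order of $h$ too crude to close against $\|u_i-\mathcal P_h u_i\|_b$. I would resolve this by invoking the CR commuting identity $a_h(\Pi_h^{\mathrm{CR}}w-w,v_h)=0$ for all $v_h\in V_h$, which follows from $\int_\ell(\Pi_h^{\mathrm{CR}}w-w)\,\mathrm{d}s=0$ together with $\nabla v_h$ being piecewise constant on each triangle. This identity rewrites $\gamma_j = -\bar\lambda_{j,h}\,b(\Pi_h^{\mathrm{CR}}u_i-\mathcal P_h u_i,\bar u_{j,h})$ for $j\le k$, so Parseval in the $b$-orthonormal basis controls $\sum_{j\le k}\gamma_j^2\bar\mu_{j,h}$ by $\|\Pi_h^{\mathrm{CR}}u_i-\mathcal P_h u_i\|_b^2$; the latter is of the same $O(h^2)$ order as $\|u_i-\mathcal P_h u_i\|_b^2$ via the standard CR interpolation error estimate combined with Aubin--Nitsche duality, which then delivers the factor $2$ in the $b$-norm estimate.
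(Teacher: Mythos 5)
Your treatment of the energy-norm estimate \eqref{Theo Energy_Error_Estimate_k} is correct, and it is essentially the paper's argument in a different packaging. Your tail coefficients $\gamma_j$ for $j>k$ reduce to exactly the paper's $\alpha_j=a_h(\mathcal P_hu_i,\bar u_{j,h})$ from \eqref{Alpha_Estimate}, and your Bessel bound $\sum_{j\le k}\gamma_j^2\le\|u_i-\mathcal P_hu_i\|_{a,h}^2$ for the head block is precisely the statement $\|\bar F_{k,h}(\mathcal P_hu_i-u_i)\|_{a,h}\le\|\mathcal P_hu_i-u_i\|_{a,h}$ that the paper obtains from $\|\bar F_{k,h}\|_{a,h}\le 1$; the paper merely uses the three-term splitting $u_i-\bar F_{k,h}u_i=(u_i-\mathcal P_hu_i)+(I-\bar F_{k,h})\mathcal P_hu_i+\bar F_{k,h}(\mathcal P_hu_i-u_i)$ so that the middle term is a pure tail quantity, while you keep head and tail together and separate them in the Parseval sum. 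Your final step $\sqrt{a^2+b^2}\le a+b$ recovers the factor $2$ just as the paper's triangle inequality does, so this half is fine.

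The gap is in the $\|\cdot\|_b$ estimate \eqref{Theo L2_Error_Estimate_k_0}. You correctly identify the delicate point (the head block $\sum_{j\le k}\gamma_j^2\bar\mu_{j,h}$, which is nonzero only because of the nonconformity of $V_h$), and your commuting identity $a_h(\Pi_h^{\rm CR}u_i-u_i,v_h)=0$ and the resulting bound $\sum_{j\le k}\gamma_j^2\bar\mu_{j,h}\le\|\Pi_h^{\rm CR}u_i-\mathcal P_hu_i\|_b^2$ are both valid. But ``of the same $O(h^2)$ order'' is not the same as ``bounded by'': since $\|\Pi_h^{\rm CR}u_i-\mathcal P_hu_i\|_b\le\|u_i-\Pi_h^{\rm CR}u_i\|_b+\|u_i-\mathcal P_hu_i\|_b$ and the interpolation error $\|u_i-\Pi_h^{\rm CR}u_i\|_b$ is not dominated by $\|u_i-\mathcal P_hu_i\|_b$ with constant $1$ (or any universal constant), what your argument actually yields is
\begin{equation*}
\left\|u_i-\bar F_{k,h}u_i\right\|_b\le\left(2+\frac{\bar\mu_{k+1,h}}{\delta_{k,i,h}}\right)\left\|u_i-\mathcal P_hu_i\right\|_b+\left\|u_i-\Pi_h^{\rm CR}u_i\right\|_b,
\end{equation*}
which suffices for the $O(h^2)$ rate of Corollary \ref{Error_Estimate_Corollary_k} but is not the literal inequality \eqref{Theo L2_Error_Estimate_k_0}. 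For comparison, the paper disposes of this head block by asserting the operator bound $\|\bar F_{k,h}\|_b\le 1$ on all of $V+V_h$ (justified there only by a brief appeal to relative compactness); on $V_h$ this bound is clean, because $a_h(w,\bar u_{j,h})=\bar\lambda_{j,h}b(w,\bar u_{j,h})$ makes the $a_h$-orthogonal projection onto ${\rm span}\{\bar u_{1,h},\dots,\bar u_{k,h}\}$ coincide with the $b$-orthogonal projection, but its extension to arguments in $V$ is exactly the consistency-error issue you are wrestling with. So your instinct about where the difficulty lies is sound; your fix just does not land on the stated constant.
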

\begin{proof}
Since $(I-\bar F_{k,h})\mathcal P_hu_i\in V_h$ and
$(I-\bar F_{k,h})\mathcal P_hu_i\in {\rm span}\{\bar u_{k+1,h},\cdots, \bar u_{N_h,h}\}$,
the following orthogonal expansion holds
\begin{eqnarray}\label{Orthogonal_Decomposition_k}
(I-\bar F_{k,h})\mathcal P_hu_i=\sum_{j=k+1}^{N_h}\alpha_j\bar u_{j,h},
\end{eqnarray}
where $\alpha_j=a_h(\mathcal P_hu_i,\bar u_{j,h})$. From Lemma \ref{Strang_Lemma}, we have
\begin{eqnarray}\label{Alpha_Estimate}
\alpha_j&=&a_h(\mathcal P_hu_i,\bar u_{j,h}) = \bar\lambda_{j,h} b\big(\mathcal P_hu_i,\bar u_{j,h}\big)
=\frac{\bar\lambda_{j,h}\lambda_i}{\bar\lambda_{j,h}-\lambda_i}b\big(u_i-\mathcal P_hu_i,\bar u_{j,h}\big)\nonumber\\
&=&\frac{1}{\mu_i-\bar\mu_{j,h}} b\big(u_i-\mathcal P_hu_i,\bar u_{j,h}\big).
\end{eqnarray}
From the orthogonal property of eigenfunctions $\bar u_{1,h},\cdots, \bar u_{N_h,h}$, we acquire
\begin{eqnarray*}
1 = a_h(\bar u_{j,h},\bar u_{j,h}) = \bar\lambda_{j,h} b(\bar u_{j,h},\bar u_{j,h})
= \bar\lambda_{j,h}\left\|\bar u_{j,h}\right\|_{b}^2,
\end{eqnarray*}
which leads to the following property
\begin{eqnarray}\label{Equality_u_j}
\left\|\bar u_{j,h}\right\|_{b}^2=\frac{1}{\bar\lambda_{j,h}}=\bar\mu_{j,h}.
\end{eqnarray}
Because of (\ref{diseigenproblem}) and the definitions of eigenfunctions $\bar u_{1,h},\cdots, \bar u_{N_h,h}$,
we obtain the following equalities
\begin{eqnarray}\label{Orthonormal_Basis}
a_h(\bar u_{j,h},\bar u_{k,h})=\delta_{jk},
\ \ \ \ \ b\left(\frac{\bar u_{j,h}}{\left\|\bar u_{j,h}\right\|_{b}},
\frac{\bar u_{k,h}}{\left\|\bar u_{k,h}\right\|_{b}}\right)=\delta_{jk},\ \ \ 1\leq j,k\leq N_h.
\end{eqnarray}
Then due to (\ref{Orthogonal_Decomposition_k}), (\ref{Alpha_Estimate}), (\ref{Equality_u_j}) and (\ref{Orthonormal_Basis}),
we have the following estimates
\begin{eqnarray}\label{Equality_4_i}
&&\left\|(I-\bar F_{k,h})\mathcal P_hu_i\right\|_{a,h}^2 = \left\|\sum_{j=k+1}^{N_h}\alpha_j\bar u_{j,h}\right\|_{a,h}^2
= \sum_{j=k+1}^{N_h}\alpha_j^2\nonumber\\
&&=\sum_{j=k+1}^{N_h} \left(\frac{1}{\mu_i-\bar\mu_{j,h}}\right)^2 b\big(u_i-\mathcal P_hu_i,\bar u_{j,h}\big)^2\nonumber\\
&&\leq\frac{1}{\delta_{k,i,h}^2}\sum_{j=k+1}^{N_h}\left\|\bar u_{j,h}\right\|_{b}^2
b\left(u_i-\mathcal P_hu_i,\frac{\bar u_{j,h}}{\left\|\bar u_{j,h}\right\|_{b}}\right)^2\nonumber\\
&&=\frac{1}{\delta_{k,i,h}^2}\sum_{j=k+1}^{N_h}\bar\mu_{j,h}
b\left(u_i-\mathcal P_hu_i,\frac{\bar u_{j,h}}{\left\|\bar u_{j,h}\right\|_{b}}\right)^2\nonumber\\
&&
\leq \frac{\bar\mu_{k+1,h}}{\delta_{k,i,h}^2}\sum_{j=k+1}^{N_h}
b\left(u_i-\mathcal P_hu_i,\frac{\bar u_{j,h}}{\left\|\bar u_{j,h}\right\|_{b}}\right)^2 \nonumber\\
&&\leq \frac{\bar\mu_{k+1,h}}{\delta_{k,i,h}^2}\left\|u_i-\mathcal P_hu_i\right\|_{b}^2,
\end{eqnarray}
where the last inequality holds since $\frac{\bar u_{1,h}}{\left\|\bar u_{1,h}\right\|_{b}}$, $\cdots$,
$\frac{\bar u_{j,h}}{\left\|\bar u_{j,h}\right\|_{b}}$ are the normal orthogonal basis for the space $V_h$
in the sense of the inner product $b(\cdot, \cdot)$.

From (\ref{Equality_4_i}),  the following inequality holds
\begin{eqnarray}\label{Equality_5_k}
\left\|(I-\bar F_{k,h})\mathcal P_hu_i\right\|_{a,h}
\leq\frac{\sqrt{\bar\mu_{k+1,h}}}{\delta_{k,i,h}}\left\|u_i-\mathcal P_hu_i\right\|_{b}.
\end{eqnarray}
Since $\bar F_{k,h}$ is the spectral projection operator with respect to $a_h(\cdot, \cdot)$, we write
\begin{equation}\label{Fk}
\|\bar F_{k,h}\|_{a,h}\leq 1
\end{equation}
From (\ref{Equality_5_k}), \eqref{Fk} and the triangle inequality,  it follows that
\begin{eqnarray*}
&&\left\|u_i-\bar F_{k,h}u_i\right\|_{a,h}\leq \left\|u_i-\mathcal P_hu_i\right\|_{a,h}+  \left\|(I-\bar F_{k,h})\mathcal P_hu_i\right\|_{a,h} + \left\|\bar F_{k,h}(\mathcal P_h-I)u_i\right\|_{a,h}\nonumber\\
&&\leq  \left\|u_i-\mathcal P_hu_i\right\|_{a,h} + \left\|(I-\bar F_{k,h})\mathcal P_hu_i\right\|_{a,h} +
\left\|\bar F_{k,h}\right\|_{a,h}\left\|(\mathcal P_h-I)u_i\right\|_{a,h}\nonumber\\
&&\leq 2\|u_i-\mathcal P_h u_i\|_{a,h} + \frac{\sqrt{\bar\mu_{k+1,h}}}{\delta_{k,i,h}}\left\|u_i-\mathcal P_hu_i\right\|_{b}.
\end{eqnarray*}
This is the desired result (\ref{Theo Energy_Error_Estimate_k}).

Similarly, with the help of  (\ref{Orthogonal_Decomposition_k}),
(\ref{Alpha_Estimate}), (\ref{Equality_u_j}) and (\ref{Orthonormal_Basis}),
we have the following estimates
\begin{eqnarray*}
&&\left\|(I-\bar F_{k,h})\mathcal P_hu_i\right\|_{b}^2 = \left\|\sum_{j=k+1}^{N_h}\alpha_j\bar u_{j,h}\right\|_{b}^2
= \sum_{j=k+1}^{N_h}\alpha_j^2\left\|\bar u_{j,h}\right\|_{b}^2\nonumber\\
&&=\sum_{j=k+1}^{N_h} \left(\frac{1}{\mu_i-\bar\mu_{j,h}}\right)^2
b\big(u_i-\mathcal P_hu_i,\bar u_{j,h}\big)^2\left\|\bar u_{j,h}\right\|_{b}^2\nonumber\\
&&\leq\frac{1}{\delta_{k,i,h}^2}\sum_{j=k+1}^{N_h}\left\|\bar u_{j,h}\right\|_{b}^4\
b\left(u_i-\mathcal P_hu_i,\frac{\bar u_{j,h}}{\left\|\bar u_{j,h}\right\|_{b}}\right)^2\nonumber\\
&&=\frac{1}{\delta_{k,i,h}^2}\sum_{j=k+1}^{N_h}\bar\mu_{j,h}^2
b\left(u_i-\mathcal P_hu_i,\frac{\bar u_{j,h}}{\left\|\bar u_{j,h}\right\|_{b}}\right)^2
\leq \frac{\bar\mu_{k+1,h}^2}{\delta_{k,i,h}^2}\left\|u_i-\mathcal P_hu_i\right\|_{b}^2,
\end{eqnarray*}
which leads to the inequality
\begin{eqnarray}\label{Equality_8_k}
\left\|(I-\bar F_{k,h})\mathcal P_hu_i\right\|_{b} \leq \frac{\bar\mu_{k+1,h}}{\delta_{k,i,h}}
\left\|u_i-\mathcal P_hu_i\right\|_{b}.
\end{eqnarray}
According to the definitions \eqref{ahb} of the norms $\|\cdot\|_{a,h}$ and $\|\cdot\|_b$ and the reference \cite{Conway}, we know that the norm $\|\cdot\|_{a,h}$ is relatively compact with respect to the norm $\|\cdot\|_b$. Combined with \eqref{Fk}, we get $\|\bar F_{k,h}\|_{b}\leq 1$. And from (\ref{Equality_8_k}) and the triangle inequality, we find the
following error estimates for the eigenfunction approximations in the $\left\|\cdot\right\|_{b}$-norm
\begin{eqnarray*}\label{Inequality_11}
&&\left\|u_i-\bar F_{k,h}u_i\right\|_{b}\leq \left\|u_i-\mathcal P_hu_i\right\|_{b}
+ \left\|(I-\bar F_{k,h}) \mathcal P_hu_i\right\|_{b} + \left\|\bar F_{k,h}(\mathcal P_hu_i-u_i)\right\|_{b}\nonumber\\
&&\leq\left(1+\|\bar F_{k,h}\|_{b}\right) \left\|\mathcal P_hu_i-u_i \right\|_{b} +
\left\|(I-\bar F_{k,h})\mathcal P_hu_i\right\|_{b}\nonumber\\
&&\leq\left(2+\frac{\bar\mu_{k+1,h}}{\delta_{k,i,h}}\right)
\left\|u_i-\mathcal P_hu_i\right\|_{b}.
\end{eqnarray*}
This is the second desired result (\ref{Theo L2_Error_Estimate_k_0}) and the proof is completed.
\end{proof}

In order to make sense of the estimates (\ref{Theo Energy_Error_Estimate_k}) and
(\ref{Theo L2_Error_Estimate_k_0}), and for simplicity of notation, we assume that the eigenvalue gap $\delta_{k,i,h}$
has a uniform lower bound which is denoted by $\delta_{k,i}$ (which can be seen as the
``true" separation of the eigenvalues $\lambda_1, \cdots, \lambda_k$ from the unwanted eigenvalues)
in the following parts of this paper. This assumption is reasonable when the mesh size is small enough.
Then we have the following convergence order based on Theorem \ref{Error_Estimate_Theorem_k} and
the convergence results of CR finite element method for boundary value problems.
\begin{corollary}\label{Error_Estimate_Corollary_k}
Under the conditions of Lemma \ref{Lemma_Error_Estimate_CR}, Theorem \ref{Error_Estimate_Theorem_k}
and $\delta_{k,i,h}$ having a uniform lower bound $\delta_{k,i}$,  the following error estimates hold
\begin{eqnarray}
\left\|u_i - F_{k,h} u_i\right\|_{a,h} &\leq& C_3h\|u\|_{2}, \ \ \ \  1\leq i\leq k,\label{Energy_Error_Estimate_k}\\
\left\|u_i-F_{k,h}u_i\right\|_{b} &\leq& C_4h^{2}\|u\|_{2},\ \ \ 1\leq i\leq k.\label{L2_Error_Estimate_k}
\end{eqnarray}
\end{corollary}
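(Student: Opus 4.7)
The plan is to combine the two estimates derived in Theorem \ref{Error_Estimate_Theorem_k} with the CR projection bounds of Lemma \ref{Lemma_Error_Estimate_CR}, and then to absorb the $h$-dependent factors on the right-hand sides into generic constants using the hypothesis that $\delta_{k,i,h}$ admits a uniform lower bound $\delta_{k,i}$.

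First I would observe that $\bar\mu_{k+1,h}=1/\bar\lambda_{k+1,h}$ is itself uniformly bounded above in $h$. Standard spectral convergence of the CR method ensures that $\bar\lambda_{k+1,h}\to \lambda_{k+1}>0$ as $h\to 0$, so there exist $h_0>0$ and a constant $M_k$, independent of $h$, with $\bar\mu_{k+1,h}\leq M_k$ for all $h\leq h_0$. Together with $\delta_{k,i,h}\geq \delta_{k,i}$, this reduces the $h$-dependent coefficients in \eqref{Theo Energy_Error_Estimate_k} and \eqref{Theo L2_Error_Estimate_k_0} to constants that depend only on $k$ and $i$.

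Next I would substitute the bounds $\|u_i-\mathcal P_h u_i\|_{a,h}\leq C_1 h\|u_i\|_2$ and $\|u_i-\mathcal P_h u_i\|_b\leq C_2 h^2\|u_i\|_2$ from Lemma \ref{Lemma_Error_Estimate_CR} into the two inequalities of Theorem \ref{Error_Estimate_Theorem_k}. For the first claim, estimate \eqref{Theo Energy_Error_Estimate_k} becomes the sum of an $O(h)$ term coming from $2\|u_i-\mathcal P_h u_i\|_{a,h}$ and an $O(h^2)$ term coming from $(\sqrt{\bar\mu_{k+1,h}}/\delta_{k,i,h})\|u_i-\mathcal P_h u_i\|_b$; for $h\leq 1$ the second is dominated by the first, producing \eqref{Energy_Error_Estimate_k} with $C_3 = 2C_1 + \sqrt{M_k}\,C_2/\delta_{k,i}$. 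For the second claim, \eqref{Theo L2_Error_Estimate_k_0} is already proportional to $\|u_i-\mathcal P_h u_i\|_b$, so applying \eqref{Aubin_Nitsche_Estimate} directly gives \eqref{L2_Error_Estimate_k} with $C_4=(2+M_k/\delta_{k,i})C_2$.

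There is no real obstacle in this argument; it is essentially a bookkeeping step that converts the qualitative bounds of Theorem \ref{Error_Estimate_Theorem_k} into quantitative rates. The only point deserving explicit mention is the uniform boundedness of $\bar\mu_{k+1,h}$ in $h$, since without it the constants $C_3$ and $C_4$ would not be $h$-independent.
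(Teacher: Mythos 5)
Your proposal is correct and follows exactly the route the paper intends: the corollary is stated as an immediate consequence of Theorem \ref{Error_Estimate_Theorem_k} combined with Lemma \ref{Lemma_Error_Estimate_CR} and the uniform gap assumption, and the paper gives no further proof. Your additional remark on the uniform boundedness of $\bar\mu_{k+1,h}$ is a legitimate (and implicitly assumed) detail, not a deviation.
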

The following theorem gives the error estimates for the one eigenpair approximation and the proof is similar
to that of Theorem \ref{Error_Estimate_Theorem_k}.
\begin{theorem}\label{Error_Estimate_Theorem_Old}
Let  $(\lambda,u)$ denote an exact eigenpair of the eigenvalue problem (\ref{weak_eigenvalue_problem}).
Assume the eigenpair approximation $(\bar\lambda_{i,h},\bar u_{i,h})$ has the property that
$\bar\mu_{i,h}=1/\bar\lambda_{i,h}$ is the closest to $\mu=1/\lambda$.
The corresponding spectral projector $E_{i,h}: V_h+V\mapsto {\rm span}\{\bar u_{i,h}\}$
is  defined as follows
\begin{eqnarray*}
a_h(E_{i,h}w,\bar u_{i,h}) = a_h(w,\bar u_{i,h}),\ \ \ \ {\rm for}\  w\in V_h+V.
\end{eqnarray*}
Then the following error estimate holds
\begin{eqnarray}\label{Energy_Error_Estimate_Old}
\left\|u-E_{i,h}u\right\|_{a,h}&\leq& 2\|u-\mathcal P_h u\|_{a,h}
+ \frac{\sqrt{\bar\mu_{1,h}}}{\delta_{\lambda,h}}\left\|u-\mathcal P_hu\right\|_{b},
\end{eqnarray}
where  $\delta_{\lambda,h}$ is defined as follows
\begin{eqnarray}\label{Definition_Delta}
\delta_{\lambda,h} &:=& \min_{j\neq i}|\bar\mu_{j,h}-\mu|=\min_{j\neq i} \left|\frac{1}{\bar\lambda_{j,h}}-\frac{1}{\lambda}\right|.
\end{eqnarray}
Furthermore, the eigenfunction approximation $\bar u_{i,h}$ has the following
error estimate in $\left\|\cdot\right\|_{b}$-norm
\begin{eqnarray}\label{L2_Error_Estimate_Old}
\|u - E_{i,h}u\|_{b} &\leq &  \left(2+\frac{\bar\mu_{1,h}}{\delta_{\lambda,h}}\right)
\left\|u-\mathcal P_hu\right\|_{b}.
\end{eqnarray}
\end{theorem}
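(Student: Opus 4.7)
The plan is to mirror the proof of Theorem \ref{Error_Estimate_Theorem_k} in the single-eigenpair setting, replacing the coarse projection range $\operatorname{span}\{\bar u_{1,h},\ldots,\bar u_{k,h}\}$ by $\operatorname{span}\{\bar u_{i,h}\}$. First I would observe that $(I-E_{i,h})\mathcal P_h u\in V_h$ is $a_h$-orthogonal to $\bar u_{i,h}$, so it admits the expansion
\begin{eqnarray*}
(I-E_{i,h})\mathcal P_h u = \sum_{j\neq i}\alpha_j \bar u_{j,h},\qquad \alpha_j = a_h(\mathcal P_h u,\bar u_{j,h}).
\end{eqnarray*}
Then, exactly as in \eqref{Alpha_Estimate}, applying Lemma \ref{Strang_Lemma} with $\lambda$ in place of $\lambda_i$ gives
\begin{eqnarray*}
\alpha_j = \frac{1}{\mu-\bar\mu_{j,h}}\,b(u-\mathcal P_h u,\bar u_{j,h}),\qquad j\neq i,
\end{eqnarray*}
whose denominator is controlled from below by the gap $\delta_{\lambda,h}$ of \eqref{Definition_Delta}.

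Next I would carry out the same orthogonal-series estimate as in \eqref{Equality_4_i}, using $\|\bar u_{j,h}\|_b^2=\bar\mu_{j,h}$ from \eqref{Equality_u_j} and the $b$-orthonormality of $\{\bar u_{j,h}/\|\bar u_{j,h}\|_b\}$ to get
\begin{eqnarray*}
\|(I-E_{i,h})\mathcal P_h u\|_{a,h}^2
\leq \frac{1}{\delta_{\lambda,h}^2}\sum_{j\neq i}\bar\mu_{j,h}\,
b\!\left(u-\mathcal P_h u,\tfrac{\bar u_{j,h}}{\|\bar u_{j,h}\|_b}\right)^{\!2}
\leq \frac{\bar\mu_{1,h}}{\delta_{\lambda,h}^2}\,\|u-\mathcal P_h u\|_b^2,
\end{eqnarray*}
where the uniform factor $\bar\mu_{1,h}$ replaces the $\bar\mu_{k+1,h}$ of the multi-eigenpair case since here the unwanted indices are all of $\{1,\ldots,N_h\}\setminus\{i\}$, and the largest $\bar\mu_{j,h}$ occurs at $j=1$. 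Inserting this between the triangle-inequality splitting $u-E_{i,h}u = (u-\mathcal P_h u)+(\mathcal P_h u-E_{i,h}\mathcal P_h u)+E_{i,h}(\mathcal P_h u-u)$ together with $\|E_{i,h}\|_{a,h}\leq 1$ yields \eqref{Energy_Error_Estimate_Old}.

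For the $\|\cdot\|_b$ bound I would repeat the computation with an extra factor of $\|\bar u_{j,h}\|_b^2=\bar\mu_{j,h}$ inside the sum, producing $\bar\mu_{1,h}^2/\delta_{\lambda,h}^2$ and hence $\|(I-E_{i,h})\mathcal P_h u\|_b \leq (\bar\mu_{1,h}/\delta_{\lambda,h})\|u-\mathcal P_h u\|_b$. Combining with $\|E_{i,h}\|_b\leq 1$ via the same triangle inequality as in the last display of the previous proof gives \eqref{L2_Error_Estimate_Old}.

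The only genuinely non-routine point is bookkeeping: confirming that the gap $\delta_{\lambda,h}$ defined in \eqref{Definition_Delta} really bounds $|\mu-\bar\mu_{j,h}|$ from below for \emph{every} $j\neq i$ in the expansion, and verifying that the maximum of $\bar\mu_{j,h}$ over those $j$ is $\bar\mu_{1,h}$. The first is immediate from the definition, and the second follows from the monotonicity $\bar\lambda_{1,h}\leq \bar\lambda_{2,h}\leq\cdots$, which gives $\bar\mu_{j,h}\leq \bar\mu_{1,h}$. Once these are in place the estimates go through verbatim, so no new analytical obstruction arises beyond what was already handled in Theorem \ref{Error_Estimate_Theorem_k}.
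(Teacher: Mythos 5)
Your proposal is correct and follows essentially the same route as the paper's own proof: the same orthogonal expansion of $(I-E_{i,h})\mathcal P_h u$ over $\{\bar u_{j,h}\}_{j\neq i}$, the same use of Lemma \ref{Strang_Lemma} to express $\alpha_j$, the same bound $\bar\mu_{j,h}\leq\bar\mu_{1,h}$, and the same triangle-inequality splitting with $\|E_{i,h}\|_{a,h}\leq 1$ and $\|E_{i,h}\|_b\leq 1$. The only cosmetic caveat is that when $i=1$ the index $j=1$ is excluded from the sum, so $\bar\mu_{1,h}$ is merely an upper bound rather than the attained maximum, but the estimate is unaffected.
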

\begin{proof}
Since $(I-E_{i,h})\mathcal P_hu\in V_h$ and
$(I-E_{i,h})\mathcal P_hu\in {\rm span}\{\bar u_{1,h}, \cdots, \bar u_{i-1,h}, \bar u_{i+1,h},\cdots, \bar u_{N_h,h}\}$,
the following orthogonal expansion holds
\begin{eqnarray}\label{Orthogonal_Decomposition_1}
(I-E_{i,h})\mathcal P_hu=\sum_{j\neq i}\alpha_j\bar u_{j,h},
\end{eqnarray}
where $\alpha_j=a_h(\mathcal P_hu,\bar u_{j,h})$ has the same equality (\ref{Alpha_Estimate}).

Then due to (\ref{Alpha_Estimate}), (\ref{Equality_u_j}),  (\ref{Orthonormal_Basis}) and  (\ref{Orthogonal_Decomposition_1}),
we have the following estimates
\begin{eqnarray}\label{Equality_4_i_1}
&&\left\|(I-E_{i,h})\mathcal P_hu\right\|_{a,h}^2 = \left\|\sum_{j\neq i}\alpha_j\bar u_{j,h}\right\|_{a,h}^2
= \sum_{j\neq i}\alpha_j^2\nonumber\\
&&=\sum_{j\neq i} \left(\frac{1}{\mu-\bar\mu_{j,h}}\right)^2 b\big(u-\mathcal P_hu,\bar u_{j,h}\big)^2
\leq\frac{1}{\delta_{\lambda,h}^2}\sum_{j\neq i}\left\|\bar u_{j,h}\right\|_{b}^2
b\left(u-\mathcal P_hu,\frac{\bar u_{j,h}}{\left\|\bar u_{j,h}\right\|_{b}}\right)^2\nonumber\\
&&=\frac{1}{\delta_{\lambda,h}^2}\sum_{j\neq i}\bar\mu_{j,h}
b\left(u-\mathcal P_hu,\frac{\bar u_{j,h}}{\left\|\bar u_{j,h}\right\|_{b}}\right)^2\nonumber\\
&&
\leq \frac{\bar\mu_{1,h}}{\delta_{\lambda,h}^2}\sum_{j\neq i}b\left(u-\mathcal P_hu,\frac{\bar u_{j,h}}{\left\|\bar u_{j,h}\right\|_{b}}\right)^2\\
&&\leq \frac{\bar\mu_{1,h}}{\delta_{\lambda,h}^2}\left\|u-\mathcal P_hu\right\|_{b}^2,
\end{eqnarray}
where the last inequality holds since $\frac{\bar u_{1,h}}{\left\|\bar u_{1,h}\right\|_{b}}$, $\cdots$,
$\frac{\bar u_{j,h}}{\left\|\bar u_{j,h}\right\|_{b}}$ are the normal orthogonal basis for the space $V_h$
in the sense of the inner product $b(\cdot, \cdot)$.

From (\ref{Equality_4_i_1}),  the following inequality holds
\begin{eqnarray}\label{Equality_5_1}
\left\|(I-E_{i,h})\mathcal P_hu\right\|_{a,h}
\leq\frac{\sqrt{\bar\mu_{1,h}}}{\delta_{\lambda,h}}\left\|u-\mathcal P_hu\right\|_{b}.
\end{eqnarray}
Since $E_{i,h}$ is the spectral projection operator with respect to $a_h(\cdot, \cdot)$, we get $\|E_{i,h}\|_{a,h}\leq 1$. And from (\ref{Equality_5_1}) and the triangle inequality,  it follows that
\begin{eqnarray*}
&&\left\|u-E_{i,h}u\right\|_{a,h}\leq \left\|u-\mathcal P_hu\right\|_{a,h}+  \left\|(I-E_{i,h})\mathcal P_hu\right\|_{a,h} + \left\|E_{i,h}(\mathcal P_h-I)u\right\|_{a,h}\nonumber\\
&&\leq  \left\|u-\mathcal P_hu\right\|_{a,h} + \left\|(I-E_{i,h})\mathcal P_hu\right\|_{a,h} +
\left\|E_{i,h}\right\|_{a,h}\left\|(\mathcal P_h-I)u\right\|_{a,h}\nonumber\\
&&\leq 2\|u-\mathcal P_h u\|_{a,h} + \frac{\sqrt{\bar\mu_{1,h}}}{\delta_{\lambda,h}}\left\|u-\mathcal P_hu\right\|_{b}.
\end{eqnarray*}
This is the desired result (\ref{Energy_Error_Estimate_Old}).

Similarly, with the help of  (\ref{Alpha_Estimate}), (\ref{Equality_u_j}), (\ref{Orthonormal_Basis}) and (\ref{Orthogonal_Decomposition_1}), we have the following estimates
\begin{eqnarray*}
&&\left\|(I-E_{i,h})\mathcal P_hu\right\|_{b}^2 = \left\|\sum_{j\neq i}\alpha_j\bar u_{j,h}\right\|_{b}^2
= \sum_{j\neq i}\alpha_j^2\left\|\bar u_{j,h}\right\|_{b}^2\nonumber\\
&&=\sum_{j\neq i} \left(\frac{1}{\mu-\bar\mu_{j,h}}\right)^2
b\big(u-\mathcal P_hu,\bar u_{j,h}\big)^2\left\|\bar u_{j,h}\right\|_{b}^2\nonumber\\
&&\leq\frac{1}{\delta_{\lambda, h}^2}\sum_{j\neq i}\left\|\bar u_{j,h}\right\|_{b}^4\
b\left(u-\mathcal P_hu,\frac{\bar u_{j,h}}{\left\|\bar u_{j,h}\right\|_{b}}\right)^2\nonumber\\
&&\leq \frac{\bar\mu_{1,h}^2}{\delta_{\lambda,h}^2}\left\|u-\mathcal P_hu\right\|_{b}^2,
\end{eqnarray*}
which leads to the inequality
\begin{eqnarray}\label{Equality_8_1}
\left\|(I-E_{i,h})\mathcal P_hu\right\|_{b} \leq \frac{\bar\mu_{k+1,h}}{\delta_{k,i,h}}
\left\|u-\mathcal P_hu\right\|_{b}.
\end{eqnarray}
Due to the definitions \eqref{ahb} of the norms $\|\cdot\|_{a, h}$ and $\|\cdot\|_b$, we illustrate that the norm $\|\cdot\|_{a,h}$ is relatively compact with respect to the norm $\|\cdot\|_b$. And together with $\|E_{i,h}\|_{a,h}\leq 1$, we obtain $\|E_{i,h}\|_{b}\leq 1$. From (\ref{Equality_8_1}), $\|E_{i,h}\|_{b}\leq 1$ and the triangle inequality, we find the
following error estimates for the eigenfunction approximations in the $\left\|\cdot\right\|_{b}$-norm
\begin{eqnarray*}\label{Inequality_11}
&&\left\|u-E_{i,h}u\right\|_{b}\leq \left\|u-\mathcal P_hu\right\|_{b} + \left\|(I-E_{i,h})
\mathcal P_hu\right\|_{b} + \left\|E_{i,h}(\mathcal P_hu-u)\right\|_{b}\nonumber\\
&&\leq\left(1+\|E_{i,h}\|_{b}\right) \left\|\mathcal P_hu-u \right\|_{b} +
\left\|(I-E_{i,h})\mathcal P_hu\right\|_{b}\leq\left(2+\frac{\bar\mu_{1,h}}{\delta_{\lambda,h}}\right)
\left\|u-\mathcal P_hu\right\|_{b}.
\end{eqnarray*}
This is the second desired result (\ref{L2_Error_Estimate_Old}) and the proof is completed.
\end{proof}
Similarly, in order to make sense of the estimates (\ref{Energy_Error_Estimate_Old}) and
(\ref{L2_Error_Estimate_Old}), and for simplicity of notation,
we assume that the eigenvalue gap $\delta_{\lambda,h}$ defined by (\ref{Definition_Delta})
has also a uniform lower bound which is denoted by $\delta_\lambda$ (which can be seen as the
``true" separation of the eigenvalue $\lambda$ from others) in the following parts of this paper.
This assumption is also reasonable when the mesh size is small enough.
Then we also have the following convergence result
based on Theorem \ref{Error_Estimate_Theorem_Old} and the convergence results of CR finite element method for
boundary value problems.
\begin{corollary}\label{Error_Estimate_Corollary}
Under the conditions of Lemma \ref{Lemma_Error_Estimate_CR}, Theorem \ref{Error_Estimate_Theorem_Old} and
$\delta_{\lambda,h}$ having a uniform lower bound $\delta_\lambda$, the following error estimates hold
\begin{eqnarray}
\left\|u-E_{i,h}u\right\|_{a,h}&\leq& C_5h\|u\|_{2},\label{Energy_Error_Estimate}\\
\left\|u-E_{i,h}u\right\|_{b}&\leq& C_6h^{2}\|u\|_{2}. \label{L2_Error_Estimate}
\end{eqnarray}
\end{corollary}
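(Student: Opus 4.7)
The plan is to obtain Corollary \ref{Error_Estimate_Corollary} as a direct consequence of Theorem \ref{Error_Estimate_Theorem_Old} by inserting the finite element projection bounds from Lemma \ref{Lemma_Error_Estimate_CR} into the two inequalities (\ref{Energy_Error_Estimate_Old}) and (\ref{L2_Error_Estimate_Old}). No new spectral machinery is required; the work is essentially bookkeeping of the constants.

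First I would observe that the assumption $\delta_{\lambda,h}\geq \delta_\lambda > 0$ uniformly in $h$ immediately makes the denominators $\delta_{\lambda,h}$ appearing in Theorem \ref{Error_Estimate_Theorem_Old} controllable by $\delta_\lambda$. Next, since $\bar\lambda_{1,h}\to \lambda_1 > 0$ under the CR finite element discretization, the sequence $\bar\mu_{1,h}=1/\bar\lambda_{1,h}$ is uniformly bounded for $h$ small enough, so there is a constant $M$ independent of $h$ such that $\sqrt{\bar\mu_{1,h}} \leq M$ and $\bar\mu_{1,h} \leq M^2$. These two facts together imply that both $\sqrt{\bar\mu_{1,h}}/\delta_{\lambda,h}$ and $\bar\mu_{1,h}/\delta_{\lambda,h}$ are bounded by a constant that depends only on the targeted eigenvalue and its isolation, not on $h$.

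For the energy-type estimate (\ref{Energy_Error_Estimate}), I would start from (\ref{Energy_Error_Estimate_Old}) and substitute (\ref{Delta_V_h_P_h}) and (\ref{Aubin_Nitsche_Estimate}) to get
\begin{eqnarray*}
\left\|u-E_{i,h}u\right\|_{a,h}
\leq 2C_1 h\|u\|_2 + \frac{M}{\delta_\lambda}\, C_2 h^2\|u\|_2.
\end{eqnarray*}
Since the $h^2$ term is dominated by the $h$ term for small $h$, a single constant $C_5$ absorbs both contributions, giving (\ref{Energy_Error_Estimate}). For the $L^2$-type estimate (\ref{L2_Error_Estimate}), I would plug (\ref{Aubin_Nitsche_Estimate}) directly into (\ref{L2_Error_Estimate_Old}) to obtain
\begin{eqnarray*}
\left\|u-E_{i,h}u\right\|_{b} \leq \Big(2+\frac{M^2}{\delta_\lambda}\Big) C_2 h^2\|u\|_2,
\end{eqnarray*}
so that setting $C_6 = (2+M^2/\delta_\lambda)C_2$ yields (\ref{L2_Error_Estimate}).

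There is no real obstacle here; the only point deserving a remark is the justification that $\bar\mu_{1,h}$ stays bounded uniformly in $h$, which follows from the standard convergence of CR finite element eigenvalues to $\lambda_1$, itself a consequence of Lemma \ref{Lemma_Error_Estimate_CR} applied through the Babu\v{s}ka--Osborn framework. Once this is acknowledged, the corollary is just the observation that the right-hand sides of (\ref{Energy_Error_Estimate_Old}) and (\ref{L2_Error_Estimate_Old}) are of order $h$ and $h^2$, respectively, with constants depending on $\lambda$, $\delta_\lambda$, $C_1$ and $C_2$.
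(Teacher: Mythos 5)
Your proposal is correct and follows exactly the route the paper intends (the paper states the corollary without a written proof, deriving it "based on Theorem \ref{Error_Estimate_Theorem_Old} and the convergence results of CR finite element method for boundary value problems"): substitute the projection bounds of Lemma \ref{Lemma_Error_Estimate_CR} into (\ref{Energy_Error_Estimate_Old}) and (\ref{L2_Error_Estimate_Old}), control the denominators by the uniform lower bound $\delta_\lambda$, and absorb the bounded factor $\bar\mu_{1,h}$ into the constants. Your added remark justifying the uniform boundedness of $\bar\mu_{1,h}$ via eigenvalue convergence is a sensible piece of bookkeeping that the paper leaves implicit.
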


\section{Augmented subspace method and its error estimates}\label{Section_3}
In this section, we first present the augmented subspace method for solving the eigenvalue problem \eqref{weak_eigenvalue_problem} based on CR element.
This method contains solving the auxiliary linear boundary value problem
in the fine finite element space $V_h$ and the eigenvalue problem on the
augmented subspace $V_{H,h}$ which is built by the coarse finite element space $V_H$
and a finite element function in the fine finite element space $V_h$. In order to eliminate the compatibility error of the CR element space $V_H$, we use the conforming linear finite element space $W_H$ to construct the augmented subspace $V_{H,h}$.
Then, the new convergence analysis is given for this augmented subspace method.

In order to design the augmented subspace method, we first generate a coarse mesh $\mathcal{T}_H$
with the mesh size $H$ and the coarse linear finite element space $W_H$ is
defined on the mesh $\mathcal{T}_H$. For the positive integer $\ell$ and some given eigenfunction approximations $u_{1,h}^{(\ell)},\cdots, u_{k,h}^{(\ell)}$
which are the approximations for the first $k$ eigenfunctions $\bar u_{1,h},\cdots, \bar u_{k,h}$
of \eqref{weak_eigenvalue_problem}, we can do the following augmented subspace iteration step
which is defined by Algorithm \ref{alg:augk_NC} to improve the accuracy of $u_{1,h}^{(\ell)},\cdots, u_{k,h}^{(\ell)}$, where the superscript with parentheses denotes the number of iteration steps of the augmented subspace method.
\begin{algorithm}[hbt!]
\caption{Augmented subspace method for the first $k$ eigenpairs}\label{alg:augk_NC}
\begin{enumerate}
\item For $\ell=1$, we define $\widehat u_{i,h}^{(\ell)}=u_{i,h}^{(\ell)}$, $i=1,\cdots, k$,   and
the augmented subspace $V_{H,h}^{(\ell)} = W_H +{\rm span}\{\widehat u_{1,h}^{(\ell)}, \cdots, \widehat u_{k,h}^{(\ell)}\}$.
Then solve the following eigenvalue problem:
Find $(\lambda_{i,h}^{(\ell)},u_{i,h}^{(\ell)})\in \mathbb{R}\times V_{H,h}^{(\ell)}$
such that $a_h(u_{i,h}^{(\ell)},u_{i,h}^{(\ell)})=1$ and
\begin{equation}\label{EigenProblem_k_NC_1}
a_h(u_{i,h}^{(\ell)},v_{H,h}) = \lambda_{i,h}^{(\ell)}b(u_{i,h}^{(\ell)},v_{H,h}),
\ \ \ \ \ \forall v_{H,h}\in V_{H,h}^{(\ell)},\ \ \ i=1, \cdots, k.
\end{equation}

\item Solve the following linear boundary value problems:
Find $\widehat{u}_{i,h}^{(\ell+1)}\in V_h$ such that
\begin{equation}\label{eq:Linear_Equation_k_NC}
a_h(\widehat{u}_{i,h}^{(\ell+1)},v_h) = \lambda_{i,h}^{(\ell)}b(u_{i,h}^{(\ell)},v_h),
\ \  \forall v_h\in V_h,\ \ \ i=1, \cdots, k.
\end{equation}

\item Define the augmented subspace $V_{H,h}^{(\ell+1)} = W_H +
{\rm span}\{\widehat{u}_{1,h}^{(\ell+1)}, \cdots, \widehat u_{k,h}^{(\ell+1)}\}$ and solve the following eigenvalue problem:
Find $(\lambda_{i,h}^{(\ell+1)},u_{i,h}^{(\ell+1)})\in \mathbb{R}\times V_{H,h}^{(\ell+1)}$
such that $a_h(u_{i,h}^{(\ell+1)},u_{i,h}^{(\ell+1)})=1$ and
\begin{equation}\label{Aug_Eigenvalue_Problem_k}
a_h(u_{i,h}^{(\ell+1)},v_{H,h}) = \lambda_{i,h}^{(\ell+1)}b(u_{i,h}^{(\ell+1)},v_{H,h}),
\ \ \ \ \ \forall v_{H,h}\in V_{H,h}^{(\ell+1)},\ \ \ i=1, \cdots, k.
\end{equation}
Solve (\ref{Aug_Eigenvalue_Problem_k})  to obtain $(\lambda_{1,h}^{(\ell+1)},u_{1,h}^{(\ell+1)}), \cdots,
(\lambda_{k,h}^{(\ell+1)},u_{k,h}^{(\ell+1)})$.
\item Set $\ell=\ell+1$ and go to Step 2 for the next iteration until convergence.
\end{enumerate}
\end{algorithm}

It should be noted that, although $V_h$ is nonconforming, we can still get the following nested relationship because of conforming linear finite element space $W_H$,
\begin{equation}\label{relation_NC}
W_H\subset V_{H,h}\subset V_h.
\end{equation}
In order to derive the algebraic error estimates of Algorithm \ref{alg:augk_NC}, we first concern with the error estimates of the projection operator $\mathcal P_{H,h}: V_{h}\rightarrow V_{H,h}$. The definition of $\mathcal P_{H,h}$ is given as follows.
\begin{equation}\label{Energy_Projection}
a_h\left(\mathcal{P}_{H,h} v_h, v_{H,h}\right) = a_h\left(v_h,v_{H,h}\right),\quad \forall v_{H,h}\in V_{H,h},\ \ \text{for}\ \ v_h\in V_h.
\end{equation}
From \eqref{relation_NC}, it follows that
\begin{equation}\label{eq:ZuiYou_NC}
\|v_h - \mathcal P_{H,h}v_h\|_{a,h} = \inf_{v_{H,h}\in V_{H,h}}\|v_h - v_{H,h}\|_{a,h},
\end{equation}
and
\begin{align*}
\|\bar u_h - \mathcal P_{H,h}\bar u_h\|_b &= \sup_{f\in L^2(\Omega),\|f\|_b=1} b\left(\bar u_h - \mathcal P_{H,h} \bar u_h,f\right)\\
&=  \sup_{f\in L^2(\Omega),\|f\|_b=1} a_h\left(\bar u_h - \mathcal P_{H,h} \bar u_h,T_h f\right)\\
&= \sup_{f\in L^2(\Omega),\|f\|_b=1} a_h\left(\bar u_h - \mathcal P_{H,h} \bar u_h,T_h f - v_{H,h}\right),\quad \forall v_{H,h}\in V_{H,h}.
\end{align*}
Thus,
\begin{equation}\label{eq:NiCai_NC}
\|\bar u_h - \mathcal P_{H,h}\bar u_h\|_b \le \eta_a\left(V_{H,h}\right) \|\bar u_h - \mathcal P_{H,h}\bar u_h\|_{a,h},
\end{equation}
where
\begin{equation}\label{eq:etaa_NC}
\eta_a\left(V_{H,h}\right) = \sup_{f\in L^2(\Omega),\|f\|_b=1}\inf_{v_{H,h}\in V_{H,h}}\|T_h f-v_{H,h}\|_{a,h}.
\end{equation}
By virtue of \eqref{relation_NC}, we provide
\begin{equation}\label{eq:etarelation_NC}
\eta_a\left(V_{H,h}\right) \le \eta_a\left(W_{H}\right).
\end{equation}

\begin{theorem}\label{thm:algerr_k_NC}
Let us define the spectral projection $F_{k,h}^{(m)}: V_h\mapsto {\rm span}\{u_{1,h}^{(m)}, \cdots, u_{k,h}^{(m)}\}$ for any integer $m \geq 1$
as follows
\begin{eqnarray}
a_h(F_{k,h}^{(m)}w_h, u_{i,h}^{(m)}) = a_h(w_h, u_{i,h}^{(m)}), \ \ \ i=1, \cdots, k\ \ {\rm for}\ w_h\in V_h.
\end{eqnarray}
Then the exact eigenfunctions $\bar u_{1,h},\cdots, \bar u_{k,h}$ of (\ref{diseigenproblem}) and the eigenfunction
approximations $u_{1,h}^{(\ell+1)}$, $\cdots$,  $u_{k,h}^{(\ell+1)}$ from Algorithm \ref{alg:augk_NC} with the integer $\ell \geq 1$ have the following error estimate
\begin{equation}\label{Error_Estimate_Inverse}
\left\|\bar u_{i,h} -F_{k,h}^{(\ell+1)}\bar u_{i,h} \right\|_{a,h} \leq
\bar\lambda_{i,h} \sqrt{1+\frac{\eta_a^2(W_H)}{\lambda_{k+1}\big(\delta_{k,i}^{(\ell+1)}\big)^2}}
\left(1+\frac{1}{\lambda_{k+1}\delta_{k,i}^{(\ell)}}\right)\eta_a^2(W_H)\left\|\bar u_{i,h} - F_{k,h}^{(\ell)}\bar u_{i,h} \right\|_{a,h},
\end{equation}
where $\eta_a\left(W_H\right)$ is defined by \eqref{eq:etaa_NC}. Furthermore, the following $\left\|\cdot\right\|_b$-norm error estimate holds
\begin{eqnarray}\label{L2_Error_Estimate_Algorithm_1}
&&\left\|\bar u_{i,h} -F_{k,h}^{(\ell+1)}\bar u_{i,h} \right\|_b\leq \left(1+\frac{1}{\lambda_{k+1}\delta_{k,i}^{(\ell+1)}}\right)\eta_a(W_H) \left\|\bar u_{i,h} -F_{k,h}^{(\ell+1)}\bar u_{i,h}\right\|_{a,h}.
\end{eqnarray}
Here denote by $\delta_{k,i}^{(\ell +1)}$ the uniform lower bound of the eigenvalue gap $\delta_{k,i,h}^{(\ell +1)}$, which is defined as
\begin{equation}
\delta_{k,i,h}^{(\ell +1)} := \min_{k < j \le N_h} \left|\dfrac{1}{\lambda_{j,h}^{(\ell +1)}} - \dfrac{1}{\lambda_i}\right|.
\end{equation}
\end{theorem}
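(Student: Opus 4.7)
The plan is to reduce the algebraic error at iteration $\ell+1$ to two pieces: the quasi-best approximation of $\bar u_{i,h}$ by $V_{H,h}^{(\ell+1)}$ in $\|\cdot\|_{a,h}$, and the algebraic error at iteration $\ell$, with one factor of $\eta_a(W_H)$ harvested from each. Since $F_{k,h}^{(\ell+1)}$ is the $a_h$-orthogonal projection onto a subspace of $V_{H,h}^{(\ell+1)}$ and $\mathcal{P}_{H,h}^{(\ell+1)}$ is the $a_h$-orthogonal projection onto $V_{H,h}^{(\ell+1)}$, one has $F_{k,h}^{(\ell+1)}=F_{k,h}^{(\ell+1)}\mathcal{P}_{H,h}^{(\ell+1)}$, and Pythagoras gives
\begin{equation*}
\|\bar u_{i,h}-F_{k,h}^{(\ell+1)}\bar u_{i,h}\|_{a,h}^2 = \|\bar u_{i,h}-\mathcal{P}_{H,h}^{(\ell+1)}\bar u_{i,h}\|_{a,h}^2 + \|(I-F_{k,h}^{(\ell+1)})\mathcal{P}_{H,h}^{(\ell+1)}\bar u_{i,h}\|_{a,h}^2.
\end{equation*}
The second summand is controlled exactly as in the proof of Theorem~\ref{Error_Estimate_Theorem_k}, with $\mathcal{P}_{H,h}^{(\ell+1)}$ replacing $\mathcal{P}_h$, the iterate eigenpairs $(\lambda_{j,h}^{(\ell+1)},u_{j,h}^{(\ell+1)})$ replacing $(\bar\lambda_{j,h},\bar u_{j,h})$, and $(\bar\lambda_{i,h},\bar u_{i,h})$ playing the role of $(\lambda_i,u_i)$; expanding in the $a_h$-orthogonal basis $\{u_{j,h}^{(\ell+1)}\}_{j=k+1}$ and invoking the standard lower bound $\lambda_{k+1,h}^{(\ell+1)}\geq\lambda_{k+1}$ together with the gap assumption bounds it by $\|\bar u_{i,h}-\mathcal P_{H,h}^{(\ell+1)}\bar u_{i,h}\|_b/[\sqrt{\lambda_{k+1}}\,\delta_{k,i}^{(\ell+1)}]$. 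Absorbing the $L^2$ norm into $\eta_a(W_H)\|\bar u_{i,h}-\mathcal P_{H,h}^{(\ell+1)}\bar u_{i,h}\|_{a,h}$ via \eqref{eq:NiCai_NC}--\eqref{eq:etarelation_NC} produces precisely the square-root prefactor of \eqref{Error_Estimate_Inverse}.

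The main task is the bound on $\|\bar u_{i,h}-\mathcal{P}_{H,h}^{(\ell+1)}\bar u_{i,h}\|_{a,h}$, which must carry the remaining factor $\bar\lambda_{i,h}(1+1/[\lambda_{k+1}\delta_{k,i}^{(\ell)}])\eta_a^2(W_H)\|\bar u_{i,h}-F_{k,h}^{(\ell)}\bar u_{i,h}\|_{a,h}$. The crucial structural observation is that Step~2 of Algorithm~\ref{alg:augk_NC} gives $\widehat u_{j,h}^{(\ell+1)}=\lambda_{j,h}^{(\ell)}T_hu_{j,h}^{(\ell)}$, so $T_hF_{k,h}^{(\ell)}\bar u_{i,h}$ is a linear combination of $\widehat u_{j,h}^{(\ell+1)}/\lambda_{j,h}^{(\ell)}$ and therefore already lies in $V_{H,h}^{(\ell+1)}$. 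For $g:=(I-F_{k,h}^{(\ell)})\bar u_{i,h}$ and any $w_H\in W_H$, the element $\bar\lambda_{i,h}(T_hF_{k,h}^{(\ell)}\bar u_{i,h}+w_H)$ thus belongs to $V_{H,h}^{(\ell+1)}$; and since $\bar u_{i,h}=\bar\lambda_{i,h}T_h\bar u_{i,h}$, its residual against $\bar u_{i,h}$ simplifies to $\bar\lambda_{i,h}(T_hg-w_H)$. Choosing $w_H$ to be the quasi-best $a_h$-approximant of $T_hg$ in $W_H$ and applying the definition \eqref{eq:etaa_NC} of $\eta_a(W_H)$, optimality of $\mathcal{P}_{H,h}^{(\ell+1)}$ in $V_{H,h}^{(\ell+1)}$ yields
\begin{equation*}
\|\bar u_{i,h}-\mathcal P_{H,h}^{(\ell+1)}\bar u_{i,h}\|_{a,h}\leq\bar\lambda_{i,h}\,\eta_a(W_H)\,\|g\|_b.
\end{equation*}

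Finally, $\|g\|_b=\|(I-F_{k,h}^{(\ell)})\bar u_{i,h}\|_b$ is converted back to the $a_h$-norm by invoking \eqref{L2_Error_Estimate_Algorithm_1} at iteration $\ell$, which supplies the factor $(1+1/[\lambda_{k+1}\delta_{k,i}^{(\ell)}])\eta_a(W_H)$; multiplying the three contributions delivers \eqref{Error_Estimate_Inverse}. The companion estimate \eqref{L2_Error_Estimate_Algorithm_1} is proved by the same Pythagorean split measured in $\|\cdot\|_b$: the projection summand is bounded by \eqref{eq:NiCai_NC}--\eqref{eq:etarelation_NC}, the spectral-gap summand by the $L^2$-counterpart of Theorem~\ref{Error_Estimate_Theorem_k} combined once more with \eqref{eq:NiCai_NC}, and then $\|\bar u_{i,h}-\mathcal P_{H,h}^{(\ell+1)}\bar u_{i,h}\|_{a,h}$ is dominated by $\|\bar u_{i,h}-F_{k,h}^{(\ell+1)}\bar u_{i,h}\|_{a,h}$ since $F_{k,h}^{(\ell+1)}\bar u_{i,h}\in V_{H,h}^{(\ell+1)}$. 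The hard part is the augmentation of the natural candidate $\bar\lambda_{i,h}T_hF_{k,h}^{(\ell)}\bar u_{i,h}$ by a $W_H$-piece: without it, one only recovers the crude bound $\|T_hg\|_{a,h}\lesssim\|g\|_b$ via Poincar\'e, losing the extra $\eta_a(W_H)$ gain that upgrades the single power $\eta_a(W_H)$ to $\eta_a^2(W_H)$ and thereby yields the second-order convergence rate between two consecutive iterations.
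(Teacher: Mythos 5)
Your proposal is correct and follows the same architecture as the paper's proof: the $a_h$-orthogonal (Pythagorean) split of $\bar u_{i,h}-F_{k,h}^{(\ell+1)}\bar u_{i,h}$ through $\mathcal P_{H,h}^{(\ell+1)}$, the spectral-gap bound from the analogue of Theorem \ref{Error_Estimate_Theorem_k} for the second summand, and the key structural fact that $\bar\lambda_{i,h}\sum_j (q_j/\lambda_{j,h}^{(\ell)})\widehat u_{j,h}^{(\ell+1)}=\bar\lambda_{i,h}T_hF_{k,h}^{(\ell)}\bar u_{i,h}$ lies in $V_{H,h}^{(\ell+1)}$ and differs from $\bar u_{i,h}$ by $\bar\lambda_{i,h}T_h(I-F_{k,h}^{(\ell)})\bar u_{i,h}$. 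The one place where your mechanics genuinely diverge is the estimate of $\|\bar u_{i,h}-\mathcal P_{H,h}^{(\ell+1)}\bar u_{i,h}\|_{a,h}$: the paper tests the Galerkin orthogonality against the error, converts to $\bar\lambda_{i,h}b(\bar u_{i,h}-F_{k,h}^{(\ell)}\bar u_{i,h},\,\bar u_{i,h}-\mathcal P_{H,h}^{(\ell+1)}\bar u_{i,h})$, applies Cauchy--Schwarz in $b$, and then harvests one $\eta_a(W_H)$ from the duality bound \eqref{eq:NiCai_NC}; you instead exhibit the explicit competitor $\bar\lambda_{i,h}(T_hF_{k,h}^{(\ell)}\bar u_{i,h}+w_H)\in V_{H,h}^{(\ell+1)}$ and harvest that $\eta_a(W_H)$ directly from the best-approximation property \eqref{eq:ZuiYou_NC} and the definition \eqref{eq:etaa_NC}. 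Both routes land on exactly the intermediate inequality \eqref{Inequality_18_k}; your primal version makes it more transparent that the extra power of $\eta_a(W_H)$ comes from approximating $T_hg$ in $W_H$, while the paper's dual version reuses \eqref{eq:NiCai_NC} verbatim and avoids introducing the competitor. One shared point of looseness you should be aware of: your ``standard lower bound'' $\lambda_{k+1,h}^{(\ell+1)}\ge\lambda_{k+1}$ is not automatic here, since min--max over subspaces of $V_h$ only gives $\lambda_{k+1,h}^{(\ell+1)}\ge\bar\lambda_{k+1,h}$, and the CR element typically approximates eigenvalues from below; the paper silently makes the same identification, so this is not a gap relative to its proof, but it deserves a remark if written out in full.
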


\begin{proof}
First, let us consider the error estimate $\left\|\bar u_{i,h}- F_{k,h}^{(\ell)}\bar u_{i,h}\right\|_b$. Due to Algorithm \ref{alg:augk_NC},
we know that the approximations $u_{1,h}^{(\ell)}, \cdots, u_{k,h}^{(\ell)}$ come from (\ref{EigenProblem_k_NC_1}) (the case that $\ell=1$) or (\ref{Aug_Eigenvalue_Problem_k})
(the case that $\ell >1$). Similarly with the derivation in the case of the conforming finite element method (refer to Theorem 3.1 in \cite{MR4530254}), for both cases, there exist exact eigenfunctions $\bar u_{1,h},\cdots, \bar u_{k,h}$
such that the following error estimates for the eigenvector approximations
$u_{1,h}^{(\ell)},\cdots,  u_{k,h}^{(\ell)}$ hold for $i=1, \cdots, k$

\begin{eqnarray}\label{Inequality_13}
\left\|\bar u_{i,h}- F_{k,h}^{(\ell)}\bar u_{i,h}\right\|_b &\leq \left(1+\frac{\mu_{k+1}}{\delta_{k,i}^{(\ell)}}\right)\eta_a(V_{H,h}^{(\ell)})\left\|\bar u_{i,h}- F_{k,h}^{(\ell)}\bar u_{i,h}\right\|_{a,h} \nonumber\\
&\leq \left(1+\frac{\mu_{k+1}}{\delta_{k,i}^{(\ell)}}\right)\eta_a(W_H)\left\|\bar u_{i,h}- F_{k,h}^{(\ell)}\bar u_{i,h}\right\|_{a,h},
\end{eqnarray}
where we have used the inequality $\eta_a(V_{H,h}^{(\ell)})\leq \eta_a(W_H)$ since $W_H\subset V_{H,h}^{(\ell)}$.

From the definition of the spectral projection $F_{k,h}^{(\ell)}$.
Then there exist $k$ real numbers $q_1, \cdots, q_k \in \mathbb R$ such that $F_{k,h}^{(\ell)}\bar u_{i,h}$ has the following expansion
\begin{eqnarray}\label{Expansion_L2}
F_{k,h}^{(\ell)}\bar u_{i,h} = \sum_{j=1}^k q_ju_{j,h}^{(\ell)}.
\end{eqnarray}
From \eqref{Energy_Projection}, we obtain the orthogonal property of the projection operator $\mathcal P_{H,h}^{(\ell+1)}$, that is to say,
\begin{equation*}
a_h(\bar u_{i,h}-\mathcal P_{H,h}^{(\ell+1)}\bar u_{i,h}, v_{H,h})=0, \quad \forall v_{H,h}\in V_{H,h}^{(\ell+1)}.
\end{equation*}
Together with the definition of $V_{H,h}^{(\ell+1)}$ in Step 3 of Algorithm \ref{alg:augk_NC}, we supply
\begin{eqnarray*}
&&\left\|\bar u_{i,h} - \mathcal P_{H,h}^{(\ell+1)}\bar u_{i,h}\right\|_{a,h}^2 =
a_h\left(\bar u_{i,h} - \mathcal P_{H,h}^{(\ell+1)}\bar u_{i,h}, \bar u_{i,h} - \mathcal P_{H,h}^{(\ell+1)}\bar u_{i,h}\right)\nonumber\\
&&=a_h\left(\bar u_{i,h}, \bar u_{i,h} - \mathcal P_{H,h}^{(\ell+1)}\bar u_{i,h}\right)\nonumber\\
&&=a_h\left(\bar u_{i,h} - \sum_{j=1}^k\bar\lambda_{i,h}\frac{q_j}{\lambda_{j,h}^{(\ell)}}
\widehat u_{j,h}^{(\ell+1)}, \bar u_{i,h} - \mathcal P_{H,h}^{(\ell+1)}\bar u_{i,h}\right).
\end{eqnarray*}
Because of \eqref{diseigenproblem}, \eqref{eq:Linear_Equation_k_NC} and $V_{H,h}^{(\ell+1)}\subset V_h$, we provide
\begin{eqnarray*}
&&\left\|\bar u_{i,h} - \mathcal P_{H,h}^{(\ell+1)}\bar u_{i,h}\right\|_{a,h}^2 =
\bar\lambda_{i,h} b\left(\bar u_{i,h} - \sum_{j=1}^k\frac{q_j}{\lambda_{j,h}^{(\ell)}}\lambda_{j,h}^{(\ell)}u_{j,h}^{(\ell)}, \bar u_{i,h} - \mathcal P_{H,h}^{(\ell+1)}\bar u_{i,h}\right)\nonumber\\
&&=\bar\lambda_{i,h} b\left(\bar u_{i,h} - \sum_{j=1}^kq_ju_{j,h}^{(\ell)}, \bar u_{i,h} - \mathcal P_{H,h}^{(\ell+1)}\bar u_{i,h}\right),
\end{eqnarray*}
combined with (\ref{Expansion_L2}), then
\begin{eqnarray*}
&&\left\|\bar u_{i,h} - \mathcal P_{H,h}^{(\ell+1)}\bar u_{i,h}\right\|_{a,h}^2 =\bar\lambda_{i,h} b\left(\bar u_{i,h} - F_{k,h}^{(\ell)}\bar u_{i,h}, \bar u_{i,h} - \mathcal P_{H,h}^{(\ell+1)}\bar u_{i,h}\right)\nonumber\\
&&\leq \bar\lambda_{i,h}\left\|\bar u_{i,h} - F_{k,h}^{(\ell)}\bar u_{i,h}\right\|_b\left\|\bar u_{i,h} - \mathcal P_{H,h}^{(\ell+1)}\bar u_{i,h}\right\|_b.
\end{eqnarray*}
And considering \eqref{Inequality_13}, \eqref{eq:NiCai_NC} and the inequality $\eta_a(V_{H,h}^{(\ell+1)})\leq \eta_a(W_H)$, we render
\begin{equation*}
\left\|\bar u_{i,h} - \mathcal P_{H,h}^{(\ell+1)}\bar u_{i,h}\right\|_{a,h}^2 \leq  \bar\lambda_{i,h} \left(1+\frac{1}{\lambda_{k+1}\delta_{k,i}}\right)\eta^2_a(W_H)\left\|\bar u_{i,h}- F_{k,h}^{(\ell)}\bar u_{i,h}\right\|_{a,h} \left\|\bar u_{i,h} - \mathcal P_{H,h}^{(\ell+1)}\bar u_{i,h}\right\|_{a,h},
\end{equation*}
i.e.,
\begin{eqnarray}\label{Inequality_18_k}
\left\|\bar u_{i,h} - \mathcal P_{H,h}^{(\ell+1)}\bar u_{i,h}\right\|_{a,h} \leq \bar\lambda_{i,h} \left(1+\frac{1}{\lambda_{k+1}\delta_{k,i}}\right)\eta^2_a(W_H)\left\|\bar u_{i,h}- F_{k,h}^{(\ell)}\bar u_{i,h}\right\|_{a,h}.
\end{eqnarray}
Since $u_{1,h}^{(\ell+1)}, \cdots, u_{k,h}^{(\ell+1)}$ only come from (\ref{Aug_Eigenvalue_Problem_k})
and the orthogonal property $a_h((I-\mathcal P_{H,h}^{(\ell+1)})\bar u_{i,h}, (I-F_{k,h}^{\ell})P_{H,h}^{(\ell+1)} \bar u_{i,h}) = 0$,
we have for $i=1, \cdots, k$
\begin{eqnarray*}
\left\|\bar u_{i,h}- F_{k,h}^{(\ell+1)}\bar u_{i,h}\right\|_{a,h}^2
&=& \left\|\bar u_{i,h}- \mathcal P_{k,h}^{(\ell+1)}\bar u_{i,h}\right\|_{a,h}^2 + \left\|(I-F_{k,h}^{\ell})P_{H,h}^{(\ell+1)}\bar u_{i,h}\right\|_{a,h}^2 \\
&\leq& \left(1+\frac{\eta_a^2(V_{H,h}^{(\ell+1)})}{\lambda_{k+1}\big(\delta_{k,i}^{(\ell+1)}\big)^2}\right)
\left\|(I-\mathcal P_{H,h}^{(\ell+1)})\bar u_{i,h}\right\|_{a,h}^2 \\
&\leq&  \left(1+\frac{\eta_a^2(W_H)}{\lambda_{k+1}\big(\delta_{k,i}^{(\ell+1)}\big)^2}\right)
\left\|(I-\mathcal P_{H,h}^{(\ell+1)})\bar u_{i,h}\right\|_{a,h}^2.
\end{eqnarray*}
Thus,
\begin{eqnarray*}
\left\|\bar u_{i,h}- F_{k,h}^{(\ell+1)}\bar u_{i,h}\right\|_{a,h}
&\leq&   \sqrt{1+\frac{\eta_a^2(W_H)}{\lambda_{k+1}\big(\delta_{k,i}^{(\ell+1)}\big)^2}}
\left\|(I-\mathcal P_{H,h}^{(\ell+1)})\bar u_{i,h}\right\|_{a,h}.
\end{eqnarray*}
Together with (\ref{Inequality_18_k}), we arrive at
\begin{equation*}
\left\|\bar u_{i,h} -F_{k,h}^{(\ell+1)}\bar u_{i,h} \right\|_{a,h} \leq
\bar\lambda_{i,h} \sqrt{1+\frac{\eta_a^2(W_H)}{\lambda_{k+1}\big(\delta_{k,i}^{(\ell+1)}\big)^2}}
\left(1+\frac{1}{\lambda_{k+1}\delta_{k,i}^{(\ell)}}\right)\eta_a^2(W_H)\left\|\bar u_{i,h} - F_{k,h}^{(\ell)}\bar u_{i,h} \right\|_{a,h}.
\end{equation*}
We have the following $\left\|\cdot\right\|_b$-error estimate
\begin{equation*}
\left\|\bar u_{i,h}-F_{k,h}^{(\ell+1)}\bar u_{i,h}\right\|_b\leq \left(1+\frac{\mu_{k+1}}{\delta_{k,i}^{(\ell+1)}}\right)
\eta_a(W_H)\left\|\bar u_{i,h}-F_{k,h}^{(\ell+1)}\bar u_{i,h}\right\|_{a,h}.
\end{equation*}
The proof is completed.
\end{proof}

\begin{remark}
The convergence result (\ref{Error_Estimate_Inverse}) in Theorem \ref{thm:algerr_k_NC} means that the augmented subspace methods
have the second order convergence rate. In addition, in order to accelerate the convergence rate, we should
decrease the term $\eta_a(W_H)$ which depends on the coarse space $W_H$.
That is to say, enlarging the subspace $W_H$ can speed up the convergence.
\end{remark}

\begin{remark}\label{Remark_Eigenvalue}
In this paper, we are only concerned with the error estimates for the eigenvector
approximation since the error estimates for the eigenvalue approximation
can be deduced from the following error expansion (refer to (4.9) in \cite{MR3434038}),
\begin{eqnarray*}\label{rayexpan}
0\leq \widehat{\lambda}_i-\bar\lambda_{i,h}
=\frac{a_h(\bar u_{i,h}-\psi,\bar u_{i,h}-\psi)}{b(\psi,\psi)}-\bar\lambda_{i,h}
\frac{b(\bar u_{i,h}-\psi,\bar u_{i,h}-\psi)}{b(\psi,\psi)}+2\frac{a_h(\bar u_{i,h},\psi)-b(\bar \lambda_{i,h}\bar u_{i,h}, \psi)}{b(\psi, \psi)},
\end{eqnarray*}
where $\psi$ is the eigenvector approximation for the exact eigenvector $\bar u_{i,h}$ and
\begin{eqnarray*}
\widehat{\lambda}_i=\frac{a_h(\psi,\psi)}{b(\psi,\psi)}.
\end{eqnarray*}
\end{remark}

It is obvious that the parallel computing method can be used for Step 2 of Algorithm \ref{alg:augk_NC}
since each linear equation can be solved independently. Thereout, the augmented subspace method
can be used to design a type of parallel schemes for eigenvalue problems. Step 3 of Algorithm \ref{alg:augk_NC}
is to solve the eigenvalue problem (\ref{Aug_Eigenvalue_Problem_k}). But in order to assemble the matrices for (\ref{Aug_Eigenvalue_Problem_k}),
we need to do the inner products of the $k$ vectors in the high dimensional space $V_h$, which is a very low scalable
process for the parallel computing \cite{LiXieXuYouZhang,XuXieZhang}.
That is to say, the inner product computation of many high dimensional vectors is indeed a bottleneck for parallel computing.
In order to overcome this essential bottleneck,
we give another version of the augmented subspace method for only one (may be not the smallest one) eigenpair
which represents the single process version of this type of parallel schemes.
The corresponding numerical method is defined by Algorithm \ref{Algorithm_1}.
Here we will also give a sharper error estimate for this type of the method.

In Algorithm \ref{Algorithm_1},  we assume that the given eigenpair
approximation $(\lambda_{i,h}^{(\ell)}, u_{i,h}^{(\ell)})\in\mathbb R\times V_h$  with different superscripts is the closest
to an exact eigenpair $(\bar\lambda_{i,h}, \bar u_{i,h})$ of \eqref{diseigenproblem}.
Based on this setting, we can give the following convergence result for the augmented
subspace method defined by Algorithm \ref{Algorithm_1}.
\begin{algorithm}[hbt!]
\caption{Augmented subspace method for one eigenpair}\label{Algorithm_1}
\begin{enumerate}
\item For $\ell=1$, we define $\widehat u_{i,h}^{(\ell)}=u_{i,h}^{(\ell)}$, and
the augmented subspace $V_{H,h}^{(\ell)} = W_H +{\rm span}\{\widehat u_{i,h}^{(\ell)}\}$.
Then solve the following eigenvalue problem:
Find $(\lambda_{i,h}^{(\ell)},u_{i,h}^{(\ell)})\in \mathbb{R}\times V_{H,h}^{(\ell)}$
such that $a_h(u_{i,h}^{(\ell)},u_{i,h}^{(\ell)})=1$ and
\begin{equation}\label{parallel_correct_eig_exact1}
a_h(u_{i,h}^{(\ell)},v_{H,h}) = \lambda_{i,h}^{(\ell)}b(u_{i,h}^{(\ell)},v_{H,h}),
\ \ \ \ \ \forall v_{H,h}\in V_{H,h}^{(\ell)}.
\end{equation}

\item Solve the following linear boundary value problem:
Find $\widehat{u}_{i,h}^{(\ell+1)}\in V_h$ such that
\begin{equation}\label{Linear_Equation}
a(\widehat{u}_{i,h}^{(\ell+1)},v_h) = \lambda_{i,h}^{(\ell)}b(u_{i,h}^{(\ell)},v_h),
\ \  \forall v_h\in V_h.
\end{equation}
\item Define the augmented subspace $V_{H,h}^{(\ell+1)} = W_H +
{\rm span}\{\widehat{u}_{i,h}^{(\ell+1)}\}$ and solve the following eigenvalue problem:
Find $(\lambda_{i,h}^{(\ell+1)},u_{i,h}^{(\ell+1)})\in \mathbb{R}\times V_{H,h}^{(\ell+1)}$
such that $a_h(u_{i,h}^{(\ell+1)},u_{i,h}^{(\ell+1)})=1$ and
\begin{equation}\label{parallel_correct_eig_exact}
a_h(u_{i,h}^{(\ell+1)},v_{H,h}) = \lambda_{i,h}^{(\ell+1)}b(u_{i,h}^{(\ell+1)},v_{H,h}),
\ \ \ \ \ \forall v_{H,h}\in V_{H,h}^{(\ell+1)}.
\end{equation}
Solve (\ref{parallel_correct_eig_exact}) and the output $(\lambda_{i,h}^{(\ell+1)},u_{i,h}^{(\ell+1)})$
is chosen such that $u_{i,h}^{(\ell+1)}$ has the largest component in ${\rm span}\{\widehat{u}_{i,h}^{(\ell+1)}\}$
among all eigenfunctions of (\ref{parallel_correct_eig_exact}).
\item Set $\ell=\ell+1$ and go to Step 2 for the next iteration until convergence.
\end{enumerate}
\end{algorithm}

\begin{theorem}\label{Theorem_Error_Estimate_1}
For any integer $m\geq 1$, according to the eigenpair approximation $(\lambda_{i,h}^{(m)},u_{i,h}^{(m)})\in\mathbb R\times V_h$,
we define the spectral projector $E_{i,h}^{(m)}: V_h\mapsto {\rm span}\{u_{i,h}^{(m)}\}$ as follows
\begin{eqnarray*}
a_h(E_{i,h}^{(m)}w_h, u_{i,h}^{(m)}) = a_h(w_h, u_{i,h}^{(m)}),\ \  \ \ {\rm for}\  w_h\in V_h.
\end{eqnarray*}
Then the eigenpair approximation $(\lambda_{i,h}^{(\ell+1)},u_{i,h}^{(\ell+1)})\in\mathbb R\times V_h$ produced by
Algorithm \ref{Algorithm_1} satisfies the following error estimates
\begin{align*}
\left\|\bar u_{i,h}-E_{i,h}^{(\ell+1)}\bar u_{i,h}\right\|_{a,h} &\leq \bar\lambda_{i,h} \sqrt{1+\frac{\eta_a^2(W_H)}{\lambda_1\big(\delta_\lambda^{(\ell+1)}\big)^2}}
\left(1+\frac{1}{\lambda_1\delta_\lambda^{(\ell)}}\right)\eta_a^2(W_H)\left\|\bar u_{i,h}-E_{i,h}^{(\ell)}\bar u_{i,h}\right\|_{a,h},\\
\left\|\bar u_{i,h}-E_{i,h}^{(\ell+1)}\bar u_{i,h}\right\|_b &\leq \left(1+\frac{1}{\lambda_1\delta_\lambda^{(\ell+1)}}\right)\eta_a(W_H)
\left\|\bar u_{i,h}-E_{i,h}^{(\ell+1)}\bar u_{i,h}\right\|_{a,h}.
\end{align*}
Here denote by $\delta^{(\ell+1)}_{\lambda}$ the uniform lower bound of the eigenvalue gap $\delta^{(\ell+1)}_{\lambda,h}$, which is defined as
\begin{eqnarray*}
\delta^{(\ell+1)}_{\lambda,h} &:=& \min_{j\neq i} \left|\frac{1}{\lambda^{(\ell+1)}_{j,h}}-\frac{1}{\lambda}\right|.
\end{eqnarray*}
\end{theorem}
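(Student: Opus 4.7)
The plan is to mirror the structure of the proof of Theorem \ref{thm:algerr_k_NC} line by line, adapting every ingredient to the rank--one spectral projector $E_{i,h}^{(\ell+1)}$ and to the single--eigenfunction augmented subspace $V_{H,h}^{(\ell+1)}=W_H+\mathrm{span}\{\widehat u_{i,h}^{(\ell+1)}\}$. The starting point is an analogue of the auxiliary bound \eqref{Inequality_13}: since $u_{i,h}^{(\ell)}$ comes from \eqref{parallel_correct_eig_exact1} or \eqref{parallel_correct_eig_exact} on the augmented subspace $V_{H,h}^{(\ell)}\supset W_H$, the single--eigenpair argument underlying Theorem \ref{Error_Estimate_Theorem_Old}, combined with the Aubin--Nitsche--type inequalities \eqref{eq:NiCai_NC}--\eqref{eq:etarelation_NC}, should yield
\begin{equation*}
\left\|\bar u_{i,h}-E_{i,h}^{(\ell)}\bar u_{i,h}\right\|_b\leq\Bigl(1+\frac{1}{\lambda_1\delta_\lambda^{(\ell)}}\Bigr)\eta_a(W_H)\left\|\bar u_{i,h}-E_{i,h}^{(\ell)}\bar u_{i,h}\right\|_{a,h},
\end{equation*}
where the factor $1/\lambda_1$ replaces the $\mu_{k+1}$ in the multi--eigenpair statement because the entire spectral expansion is now taken over all indices $j\neq i$ and $\bar\mu_{1,h}\approx 1/\lambda_1$ is the largest of the weights $\bar\mu_{j,h}$.

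Next I would reproduce the projection--identity step. Write $E_{i,h}^{(\ell)}\bar u_{i,h}=q\,u_{i,h}^{(\ell)}$ for a scalar $q$, use the definition \eqref{Energy_Projection} of $\mathcal P_{H,h}^{(\ell+1)}$, the linear problem \eqref{Linear_Equation}, and the eigenvalue equation \eqref{diseigenproblem} to obtain the identity
\begin{equation*}
\left\|\bar u_{i,h}-\mathcal P_{H,h}^{(\ell+1)}\bar u_{i,h}\right\|_{a,h}^2=a_h\!\Bigl(\bar u_{i,h}-\bar\lambda_{i,h}\tfrac{q}{\lambda_{i,h}^{(\ell)}}\widehat u_{i,h}^{(\ell+1)},\,\bar u_{i,h}-\mathcal P_{H,h}^{(\ell+1)}\bar u_{i,h}\Bigr)=\bar\lambda_{i,h}\,b\!\left(\bar u_{i,h}-E_{i,h}^{(\ell)}\bar u_{i,h},\,\bar u_{i,h}-\mathcal P_{H,h}^{(\ell+1)}\bar u_{i,h}\right).
\end{equation*}
Applying Cauchy--Schwarz, the preceding $L^2$--bound, and \eqref{eq:NiCai_NC}--\eqref{eq:etarelation_NC} to both factors on the right then gives
\begin{equation*}
\left\|\bar u_{i,h}-\mathcal P_{H,h}^{(\ell+1)}\bar u_{i,h}\right\|_{a,h}\leq\bar\lambda_{i,h}\Bigl(1+\frac{1}{\lambda_1\delta_\lambda^{(\ell)}}\Bigr)\eta_a^2(W_H)\left\|\bar u_{i,h}-E_{i,h}^{(\ell)}\bar u_{i,h}\right\|_{a,h}.
\end{equation*}

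The recursion then closes via the Pythagorean decomposition
\begin{equation*}
\left\|\bar u_{i,h}-E_{i,h}^{(\ell+1)}\bar u_{i,h}\right\|_{a,h}^2=\left\|\bar u_{i,h}-\mathcal P_{H,h}^{(\ell+1)}\bar u_{i,h}\right\|_{a,h}^2+\left\|(I-E_{i,h}^{(\ell+1)})\mathcal P_{H,h}^{(\ell+1)}\bar u_{i,h}\right\|_{a,h}^2,
\end{equation*}
based on the orthogonality $a_h((I-\mathcal P_{H,h}^{(\ell+1)})\bar u_{i,h},(I-E_{i,h}^{(\ell+1)})\mathcal P_{H,h}^{(\ell+1)}\bar u_{i,h})=0$ which holds because $(I-E_{i,h}^{(\ell+1)})\mathcal P_{H,h}^{(\ell+1)}\bar u_{i,h}\in V_{H,h}^{(\ell+1)}$. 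The second term is controlled by the spectral--gap argument from the proof of Theorem \ref{Error_Estimate_Theorem_Old}, now carried out with $\mathcal P_{H,h}^{(\ell+1)}\bar u_{i,h}$ in place of a continuous eigenfunction; this produces the factor $\sqrt{1+\eta_a^2(W_H)/(\lambda_1(\delta_\lambda^{(\ell+1)})^2)}$. Combining the three displayed estimates delivers the $a_h$--bound, and the $L^2$--bound follows by reapplying the first step with $\ell$ replaced by $\ell+1$.

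The only delicate point, and the main obstacle, is verifying the algebraic identity that folds $\widehat u_{i,h}^{(\ell+1)}$ into a $b(\cdot,\cdot)$--pairing involving $\bar u_{i,h}-E_{i,h}^{(\ell)}\bar u_{i,h}$: because $E_{i,h}^{(\ell)}$ is rank one, one has to confirm that the scaling $q/\lambda_{i,h}^{(\ell)}$ is legitimate and that $\bar u_{i,h}-\mathcal P_{H,h}^{(\ell+1)}\bar u_{i,h}\in V_h$ so that \eqref{Linear_Equation} is applicable. Both hold automatically from the construction together with the hypothesis that $(\lambda_{i,h}^{(\ell)},u_{i,h}^{(\ell)})$ is the approximation closest to $(\bar\lambda_{i,h},\bar u_{i,h})$ (guaranteeing $\lambda_{i,h}^{(\ell)}\neq 0$ and a meaningful rank--one projector), and this is where the single--eigenpair case genuinely relies on that closeness assumption.
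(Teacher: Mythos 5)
Your proposal follows essentially the same route as the paper's own proof: the $\|\cdot\|_b$ auxiliary bound for $E_{i,h}^{(\ell)}$, the identity $\|\bar u_{i,h}-\mathcal P_{H,h}^{(\ell+1)}\bar u_{i,h}\|_{a,h}^2=\bar\lambda_{i,h}\,b(\bar u_{i,h}-E_{i,h}^{(\ell)}\bar u_{i,h},\,\bar u_{i,h}-\mathcal P_{H,h}^{(\ell+1)}\bar u_{i,h})$ obtained from \eqref{Energy_Projection}, \eqref{Linear_Equation} and \eqref{diseigenproblem} with $E_{i,h}^{(\ell)}\bar u_{i,h}=q\,u_{i,h}^{(\ell)}$, then Cauchy--Schwarz with \eqref{eq:NiCai_NC}--\eqref{eq:etarelation_NC}, and finally the spectral-gap factor $\sqrt{1+\eta_a^2(W_H)/(\lambda_1(\delta_\lambda^{(\ell+1)})^2)}$ applied to $(I-\mathcal P_{H,h}^{(\ell+1)})\bar u_{i,h}$. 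Your explicit Pythagorean decomposition merely spells out a step the paper states directly (it appears explicitly only in the proof of Theorem \ref{thm:algerr_k_NC}), so the argument is correct and matches the paper.
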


\begin{proof}
First, let us consider the error estimate $\left\|\bar u_{i,h}- E_{i,h}^{(\ell)}\bar u_{i,h}\right\|_b$. Because of Algorithm \ref{Algorithm_1},
we understand that $u_{i,h}^{(\ell)}$ may stem from two places, i.e., (\ref{parallel_correct_eig_exact1}) for $\ell=1$ and (\ref{parallel_correct_eig_exact}) for $\ell >1$.
Both cases present the following error estimate for the eigenvector approximation $u_{i,h}^{(\ell)}$. Similarly with the derivation process in the case of the conforming finite element method (refer to Theorem 3.2 in \cite{MR4530254}), we get

\begin{eqnarray}\label{Inequality_131}
\left\|\bar u_{i,h}- E_{i,h}^{(\ell)}\bar u_{i,h}\right\|_b&\leq& \left(1+\frac{1}{\lambda_1\delta_\lambda^{(\ell)}}\right)\eta_a(V_{H,h}^{(\ell)})
\left\|\bar u_{i,h}- E_{i,h}^{(\ell)}\bar u_{i,h}\right\|_{a,h}\nonumber\\
&\leq&\left(1+\frac{1}{\lambda_1\delta_\lambda^{(\ell)}}\right)\eta_a(W_H)
\left\|\bar u_{i,h}- E_{i,h}^{(\ell)}\bar u_{i,h}\right\|_{a,h},
\end{eqnarray}
where we have used the inequality $\eta_a(V_{H,h}^{(\ell)})\leq \eta_a(W_H)$ since $W_H\subset V_{H,h}^{(\ell)}$.

According to the orthogonal property of the projection operator $\mathcal P_{H,h}^{(\ell+1)}$, i.e.,
\begin{equation*}
a_h(\bar u_{i,h}-\mathcal P_{H,h}^{(\ell+1)}\bar u_{i,h}, v_{H,h})=0, \quad \forall v_{H,h}\in V_{H,h}^{(\ell+1)},
\end{equation*}
and the definition of $V_{H,h}^{(\ell+1)}$ in Step 3 of Algorithm \ref{Algorithm_1}, we get
\begin{eqnarray*}
&&\left\|\bar u_{i,h} - \mathcal P_{H,h}^{(\ell+1)}\bar u_{i,h}\right\|_{a,h}^2
= a_h\left(\bar u_{i,h} - \mathcal P_{H,h}^{(\ell+1)}\bar u_{i,h}, \bar u_{i,h} - \mathcal P_{H,h}^{(\ell+1)}\bar u_{i,h}\right)\nonumber\\
&&=a_h\left(\bar u_{i,h}, \bar u_{i,h} - \mathcal P_{H,h}^{(\ell+1)}\bar u_{i,h}\right)\nonumber\\
&&=a_h\left(\bar u_{i,h} -\frac{\bar\lambda_{i,h}}{\lambda_{i,h}^{(\ell)}}q\widehat u_{i,h}^{(\ell+1)}, \bar u_{i,h} - \mathcal P_{H,h}^{(\ell+1)}\bar u_{i,h}\right)\nonumber\\
&&=a_h\left(\bar u_{i,h},  \bar u_{i,h} - \mathcal P_{H,h}^{(\ell+1)}\bar u_{i,h}\right) - \frac{\bar\lambda_{i,h}}{\lambda_{i,h}^{(\ell)}}q
a_h\left(\widehat u_{i,h}^{(\ell+1)}, \bar u_{i,h} - \mathcal P_{H,h}^{(\ell+1)}\bar u_{i,h}\right).
\end{eqnarray*}
And by virtue of \eqref{diseigenproblem}, (\ref{Linear_Equation}) and $V_{H,h}^{(\ell+1)}\subset V_h$, we have
\begin{equation*}
\left\|\bar u_{i,h} - \mathcal P_{H,h}^{(\ell+1)}\bar u_{i,h}\right\|_{a,h}^2 = \bar\lambda_{i,h} b\left(\bar u_{i,h},  \bar u_{i,h} - \mathcal P_{H,h}^{(\ell+1)}\bar u_{i,h}\right) - \bar\lambda_{i,h}
b\left(q u_{i,h}^{(\ell)}, \bar u_{i,h} - \mathcal P_{H,h}^{(\ell+1)}\bar u_{i,h}\right).
\end{equation*}
Since the spectral projection $E_{i,h}^{(\ell)}$ satisfies
\begin{equation*}
E_{i,h}^{(\ell)}\bar u_{i,h} = q u_{i,h}^{(\ell)},
\end{equation*}
combined with \eqref{Inequality_131}, \eqref{eq:NiCai_NC} and the inequality $\eta_a(V_{H,h}^{(\ell+1)})\leq \eta_a(W_H)$, we obtain
\begin{eqnarray}\label{Inequality_16_2}
&&\left\|\bar u_{i,h} - \mathcal P_{H,h}^{(\ell+1)}\bar u_{i,h}\right\|_{a,h}^2
= \bar\lambda_{i,h} b\left(\bar u_{i,h} - E_{i,h}^{(\ell)}\bar u_{i,h}, \bar u_{i,h} - \mathcal P_{H,h}^{(\ell+1)}\bar u_{i,h}\right)\nonumber\\
&&\leq \bar\lambda_{i,h}\left\|\bar u_{i,h} - E_{i,h}^{(\ell)}\bar u_{i,h}\right\|_b\left\|\bar u_{i,h} - \mathcal P_{H,h}^{(\ell+1)}\bar u_{i,h}\right\|_b\nonumber\\
&&\leq  \bar\lambda_{i,h}\left(1+\frac{1}{\lambda_1\delta_\lambda^{(\ell)}}\right) \eta_a(W_H)\left\|\bar u_{i,h}- E_{i,h}^{(\ell)}\bar u_{i,h}\right\|_{a,h}
\eta_a(V_{H,h}^{(\ell+1)})\left\|\bar u_{i,h} - \mathcal P_{H,h}^{(\ell+1)}\bar u_{i,h}\right\|_{a,h}\nonumber\\
&&\leq  \bar\lambda_{i,h} \left(1+\frac{1}{\lambda_1\delta_\lambda^{(\ell)}}\right)
\eta_a^2(W_H)\left\|\bar u_{i,h}- E_{i,h}^{(\ell)}\bar u_{i,h}\right\|_{a,h}\left\|\bar u_{i,h} - \mathcal P_{H,h}^{(\ell+1)}\bar u_{i,h}\right\|_{a,h}.
\end{eqnarray}
Since the approximation $u_{i,h}^{(\ell+1)}$ only comes from (\ref{parallel_correct_eig_exact}), we have
\begin{eqnarray*}
\left\|\bar u_{i,h}- E_{i,h}^{(\ell+1)}\bar u_{i,h}\right\|_{a,h}
&\leq& \sqrt{1+\frac{\eta_a^2(V_{H,h}^{(\ell+1)})}{\lambda_1\big(\delta_\lambda^{(\ell+1)}\big)^2}}
\left\|(I-\mathcal P_{H,h}^{(\ell+1)})\bar u_{i,h}\right\|_{a,h}\nonumber\\
&\leq&  \sqrt{1+\frac{\eta_a^2(W_H)}{\lambda_1\big(\delta_\lambda^{(\ell+1)}\big)^2}}
\left\|(I-\mathcal P_{H,h}^{(\ell+1)})\bar u_{i,h}\right\|_{a,h}.
\end{eqnarray*}

From (\ref{Inequality_16_2}), there holds
\begin{eqnarray}\label{Inequality_17_2}
\left\|\bar u_{i,h} - \mathcal P_{H,h}^{(\ell+1)}\bar u_{i,h}\right\|_{a,h} \leq \bar\lambda_{i,h} \left(1+\frac{1}{\lambda_1\delta_\lambda^{(\ell)}}\right)\eta_a^2(W_H)\left\|\bar u_{i,h}- E_{i,h}^{(\ell)}\bar u_{i,h}\right\|_{a,h}.
\end{eqnarray}
And considering (\ref{Inequality_17_2}), we have the following estimate
\begin{eqnarray}\label{Inequality_18}
\left\|\bar u_{i,h}-E_{i,h}^{(\ell+1)}\bar u_{i,h}\right\|_{a,h} \leq \bar\lambda_{i,h}
\sqrt{1+\frac{\eta_a^2(W_H)}{\lambda_1\big(\delta_\lambda^{(\ell+1)} \big)^2}}\left(1+\frac{1}{\lambda_1\delta_\lambda^{(\ell)}}\right) \eta_a^2(W_H)\left\|\bar u_{i,h}-E_{i,h}^{(\ell)}\bar u_{i,h}\right\|_{a,h}.
\end{eqnarray}
The following $\left\|\cdot\right\|_b$-error estimate holds
\begin{eqnarray}\label{Inequality_19_2}
&&\left\|\bar u_{i,h}- E_{i,h}^{(\ell+1)}\bar u_{i,h}\right\|_b\leq \left(1+\frac{1}{\lambda_1\delta_\lambda^{(\ell+1)}}\right) \eta_a(W_H)\left\|\bar u_{i,h}- E_{i,h}^{(\ell+1)}\bar u_{i,h}\right\|_{a,h}.
\end{eqnarray}
From (\ref{Inequality_18}) and (\ref{Inequality_19_2}), the proof is completed.
\end{proof}

\begin{corollary}
Under the conditions of Theorem \ref{Theorem_Error_Estimate_1}, the eigenfunction approximation $u_{i,h}^{(\ell+1)}$ has the following error estimates
\begin{eqnarray}
\left\|\bar u_{i,h}-E_{i,h}^{(\ell+1)}\bar u_{i,h}\right\|_{a,h} &\leq&\big(\gamma(\bar\lambda_{i,h})\big)^\ell\  \left\|\bar u_{i,h}-E_{i,h}^{(1)}\bar u_{i,h}\right\|_{a,h},\label{Estimate_h_1_a_ell}\\
\left\|\bar u_{i,h}-E_{i,h}^{(\ell+1)}\bar u_{i,h}\right\|_b&\leq& \left(1+\frac{1}{\lambda_1\delta_\lambda^{(\ell+1)}}\right)\eta_a(W_H)
\left\|\bar u_{i,h}-E_{i,h}^{(\ell+1)}\bar u_{i,h}\right\|_{a,h},\label{Estimate_h_1_b_ell}
\end{eqnarray}
where
\begin{eqnarray}\label{Definition_Gamma}
\gamma(\bar\lambda_{i,h}) = \bar\lambda_{i,h} \sqrt{1+\frac{\eta_a^2(W_H)}{\lambda_1\big(\delta_\lambda^{(\ell+1)} \big)^2}}
\left(1+\frac{1}{\lambda_1\delta_\lambda^{(\ell)}}\right)\eta_a^2(W_H).
\end{eqnarray}
\end{corollary}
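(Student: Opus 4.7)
The second inequality \eqref{Estimate_h_1_b_ell} is already the $\|\cdot\|_b$-estimate produced in Theorem \ref{Theorem_Error_Estimate_1}, so no additional work is required: one simply reads it off from the conclusion of the theorem with the running index $\ell$. Hence the whole task reduces to proving the $\|\cdot\|_{a,h}$-estimate \eqref{Estimate_h_1_a_ell}, and the natural strategy is a straightforward induction on $\ell$ using the contraction-type inequality
\[
\left\|\bar u_{i,h}-E_{i,h}^{(m+1)}\bar u_{i,h}\right\|_{a,h}\ \leq\ \gamma(\bar\lambda_{i,h})\,\left\|\bar u_{i,h}-E_{i,h}^{(m)}\bar u_{i,h}\right\|_{a,h}
\]
that is obtained from the first conclusion of Theorem \ref{Theorem_Error_Estimate_1} together with the definition \eqref{Definition_Gamma} of $\gamma(\bar\lambda_{i,h})$.

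For the base case $\ell=1$, there is nothing to prove beyond the definition $\gamma^0=1$. For the inductive step, I assume
\[
\left\|\bar u_{i,h}-E_{i,h}^{(\ell)}\bar u_{i,h}\right\|_{a,h}\ \leq\ \bigl(\gamma(\bar\lambda_{i,h})\bigr)^{\ell-1}\left\|\bar u_{i,h}-E_{i,h}^{(1)}\bar u_{i,h}\right\|_{a,h}.
\]
Applying Theorem \ref{Theorem_Error_Estimate_1} with the iteration index shifted to $\ell$ bounds the left-hand side of \eqref{Estimate_h_1_a_ell} by $\gamma(\bar\lambda_{i,h})$ times the previous error, and substituting the induction hypothesis immediately yields the desired power $\bigl(\gamma(\bar\lambda_{i,h})\bigr)^\ell$. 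This closes the induction and delivers \eqref{Estimate_h_1_a_ell}.

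The only subtlety, and where I expect the main obstacle to lie, is that $\gamma(\bar\lambda_{i,h})$ as written in \eqref{Definition_Gamma} carries the iteration-dependent gaps $\delta_\lambda^{(\ell)}$ and $\delta_\lambda^{(\ell+1)}$. To make the iterated factor $\bigl(\gamma(\bar\lambda_{i,h})\bigr)^\ell$ meaningful, I would invoke the standing assumption stated after Theorem \ref{Error_Estimate_Theorem_Old} that these discrete gaps have a uniform lower bound $\delta_\lambda$ across iterations once the mesh is fine enough; this legitimately permits replacing every $\delta_\lambda^{(m)}$ by $\delta_\lambda$ in \eqref{Definition_Gamma} so that $\gamma(\bar\lambda_{i,h})$ becomes a single contraction factor independent of $m$. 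With that interpretation in place, the induction goes through verbatim and \eqref{Estimate_h_1_a_ell}--\eqref{Estimate_h_1_b_ell} follow.
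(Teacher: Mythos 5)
Your proof is correct and is exactly the intended argument: the paper states this corollary without proof, and it follows by iterating the first estimate of Theorem \ref{Theorem_Error_Estimate_1}, whose one-step factor is precisely $\gamma(\bar\lambda_{i,h})$, while \eqref{Estimate_h_1_b_ell} is the theorem's second conclusion restated verbatim. Your observation that the iteration-dependent gaps $\delta_\lambda^{(m)}$ must be replaced by their uniform lower bound $\delta_\lambda$ for the power $\big(\gamma(\bar\lambda_{i,h})\big)^{\ell}$ to be meaningful is a fair point that the paper glosses over.
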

The error estimate for the eigenvalue approximation $\lambda_{i,h}^{(\ell)}$ can be deduced from
Theorem \ref{Theorem_Error_Estimate_1}  and Remark \ref{Remark_Eigenvalue}.

\section{Numerical experiments}\label{Section_4}
In this section, numerical experiments are presented to validate our theoretical results. Here, we are concerned with the Laplace eigenvalue problem \eqref{originproblem}, where the computing domain is set to be the unit square $\Omega=(0,1)\times (0,1)$. $W_H$ and $V_h$ are chosen as the conforming linear element and the CR element spaces defined on the coarse mesh $\mathcal T_H$ and the fine mesh $\mathcal T_h$, respectively. The fine mesh $\mathcal T_h$ is obtained from the coarse mesh $\mathcal T_H$ by the regular refinement of uniform triangular mesh.

Since the coarse space $W_H$ is the conforming linear finite element space defined on the coarse mesh $\mathcal T_H$, together with the theories of the error estimates of CR element and conforming linear element \cite{MR2373954, MR0520174}, it is known that the following estimate holds when the mesh size $h < H$,
\begin{align*}
\eta_a(W_H)&\le \sup_{f\in L^2(\Omega),\|f\|_b=1}\left\{\|T f-T_h f\|_{a,h} + \inf_{v_{H}\in W_{H}}\|Tf-v_{H}\|_{a}\right\}\\
&\le CH,
\end{align*}
where the constant $C$ depends on shape of the meshes $\mathcal T_H$ and $\mathcal T_h$.

Based on Theorems \ref{thm:algerr_k_NC} and \ref{Theorem_Error_Estimate_1},
the convergence results can be concluded with the following inequalities
\begin{eqnarray}
&&\left\|\bar u_{i,h} -F_{k,h}^{(\ell+1)}\bar u_{i,h}\right\|_{a,h} \leq C\big(CH\big)^{2\ell}\left\|\bar u_{i,h} - F_{k,h}^{(1)}\bar u_{i,h}\right\|_{a,h},\ \ \ \ i=1, \cdots, k,\label{Test_1_1}\\
&&\left\|\bar u_{i,h} -F_{k,h}^{(\ell+1)}\bar u_{i,h} \right\|_b\leq CH \left\|\bar u_{i,h} -F_{k,h}^{(\ell+1)}\bar u_{i,h}\right\|_{a,h},\ \ \ \ i=1, \cdots, k,\label{Test_1_0}
\end{eqnarray}
and
\begin{eqnarray}
\left\|\bar u_{i,h}-E_{i,h}^{(\ell+1)}\bar u_{i,h}\right\|_{a,h} &\leq& C\big(CH\big)^{2\ell}\left\|\bar u_{i,h}-E_{i,h}^{(1)}\bar u_{i,h}\right\|_{a,h},\label{Test_2_1}\\
\left\|\bar u_{i,h}-E_{i,h}^{(\ell+1)}\bar u_{i,h}\right\|_b&\leq& CH\left\|\bar u_{i,h}-E_{i,h}^{(\ell+1)}\bar u_{i,h}\right\|_{a,h}.\label{Test_2_0}
\end{eqnarray}
One of the purposes of this section is to check the overall error estimates and algebraic error estimates \eqref{Test_1_1}-\eqref{Test_2_0} of our proposed methods. It should be noted that the exact finite element eigenfunction is obtained by solving the eigenvalue
problem directly on the fine space $V_h$, which is the CR element space defined on the fine mesh $\mathcal T_h$. To make it more intuitive, in all the following figures, the notations with
and without ``{\tt dir}" superscript stand for the exact finite element eigenfunctions and the augmented subspace approximations, respectively. The other purpose of this section is to carry out the numerical tests for the computational complexity by comparing Algorithms \ref{alg:augk_NC} and \ref{Algorithm_1} with the Krylov-Schur method directly applied on the final mesh without coarsening to verify the advantage of our algorithms.

We should note that all the following numerical tests are accomplished on LSSC-IV in the State Key Laboratory of Scientific and Engineering Computing, Academy of Mathematics and Systems Science, Chinese Academy of Sciences, where each computing node has two $18$-core Intel Xeon Gold $6140$ processors at $2.3$ GHz and $192$ GB memory.
The linear equations \eqref{eq:Linear_Equation_k_NC} in Algorithm \ref{alg:augk_NC} and (\ref{Linear_Equation}) in Algorithm \ref{Algorithm_1} are solved by the package PETSc \cite{petsc-web-page,petsc-user-ref,petsc-efficient} with the geometric multigrid method. The eigenvalue problems (\ref{EigenProblem_k_NC_1}) and (\ref{Aug_Eigenvalue_Problem_k}) in Algorithm \ref{alg:augk_NC} and (\ref{parallel_correct_eig_exact1}) and (\ref{parallel_correct_eig_exact}) in Algorithm \ref{Algorithm_1} are solved by the Krylov-Schur algorithm from SLEPc \cite{SLEPc}. The single processor is adopted for the convergence tests and $36$ processors for the computational complexity tests.

\subsection{Tests for overall error estimates and computational efficiency}
In this subsection, we carry out numerical examples to check the overall error estimates and the computational complexity of our algorithms. In the tests of error and computational efficiency, we fix the coarse mesh sizes $H=\sqrt{2}/8$ and $H=\sqrt{2}/32$, respectively, and divide the fine mesh $\mathcal{T}_h$. The initial eigenfunction approximations are obtained in two steps: (1) The initial coarse eigenfunction approximations are produced by solving the eigenvalue problem (\ref{originproblem}) on the coarse space $V_H$; (2) The interpolation matrix is used to project the initial coarse eigenfunction approximations onto the space $V_h$ to get the initial eigenfunction approximations $u_{1,h}^{(1)},\cdots, u_{k,h}^{(1)}$. Then we do the iteration steps by the augmented subspace
method defined by Algorithms \ref{alg:augk_NC} and \ref{Algorithm_1}.

For comparison, we conduct the tests that the SLEPc solver (Krylov-Schur method) is applied directly on the mesh $\mathcal{T}_h$ without coarsening, which we also call the single level solver. Figure \ref{overall} gives the overall errors for the first $4$ eigenvalues $2\pi^2$, $5\pi^2$, $5\pi^2$, $8\pi^2$ and their corresponding eigenfunctions by our algorithms and the single level solver with $H=\sqrt{2}/8$, and Figure \ref{CPUs} shows the CPU time for computing the first and the smallest $4$ eigenpair approximations with $H=\sqrt{2}/32$.

From Figure \ref{overall}, it follows that the error convergence orders of the first $4$ eigenvalues and their corresponding eigenfunctions by our algorithms are $O(h^2)$ and $O(h)$, respectively. Furthermore, from the left subfigure of Figure \ref{overall}, we can find that the augmented subspace method can also obtain the lower bound approximations of the eigenvalues.

As for the computational efficiency, from Figures \ref{overall} and \ref{CPUs}, it can be seen that the augmented subspace method provides almost the same results as the single level solver but with smaller computational work.

\begin{figure}[!hbt]
\centering
\includegraphics[width=6cm,height=4.5cm]{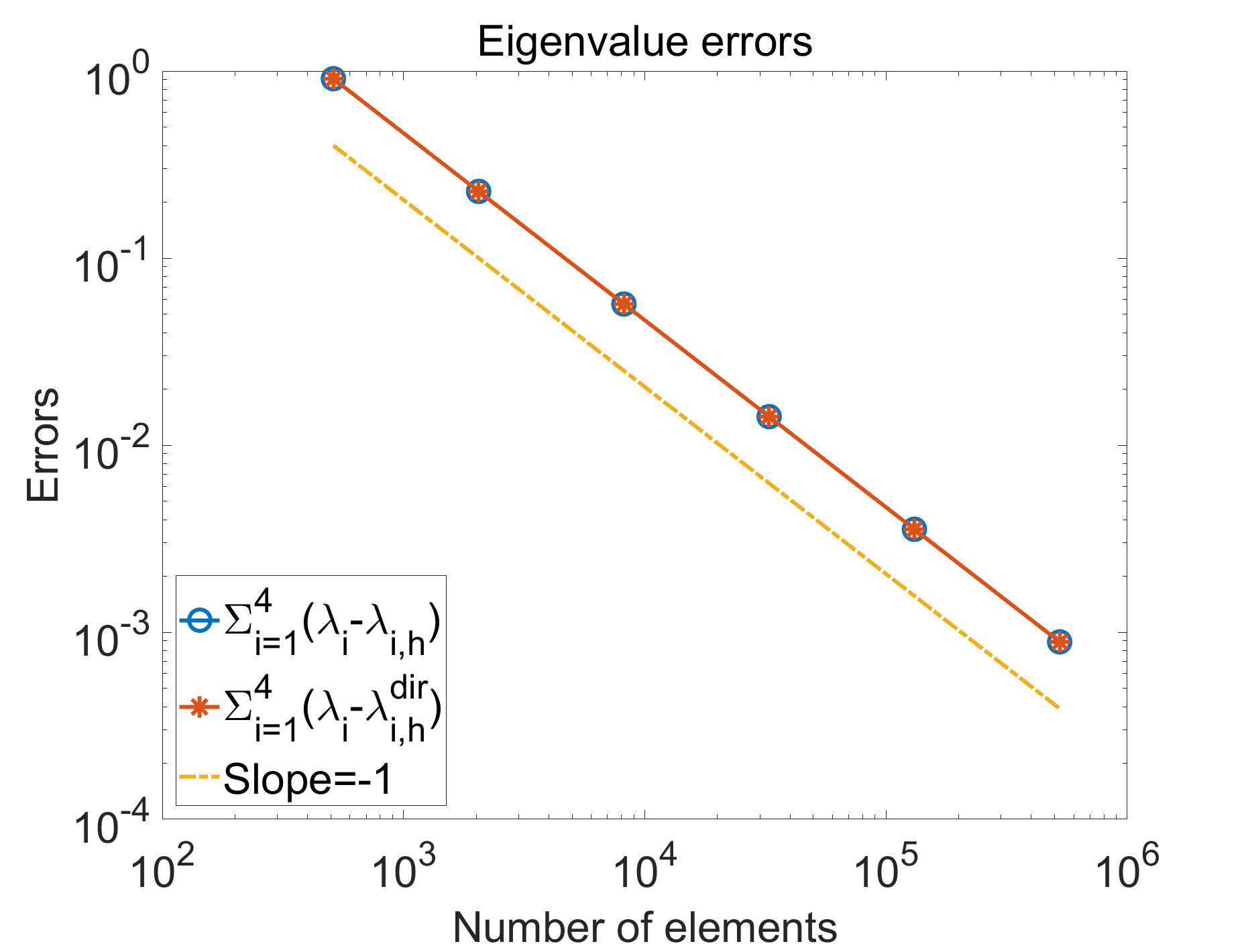}
\includegraphics[width=6cm,height=4.5cm]{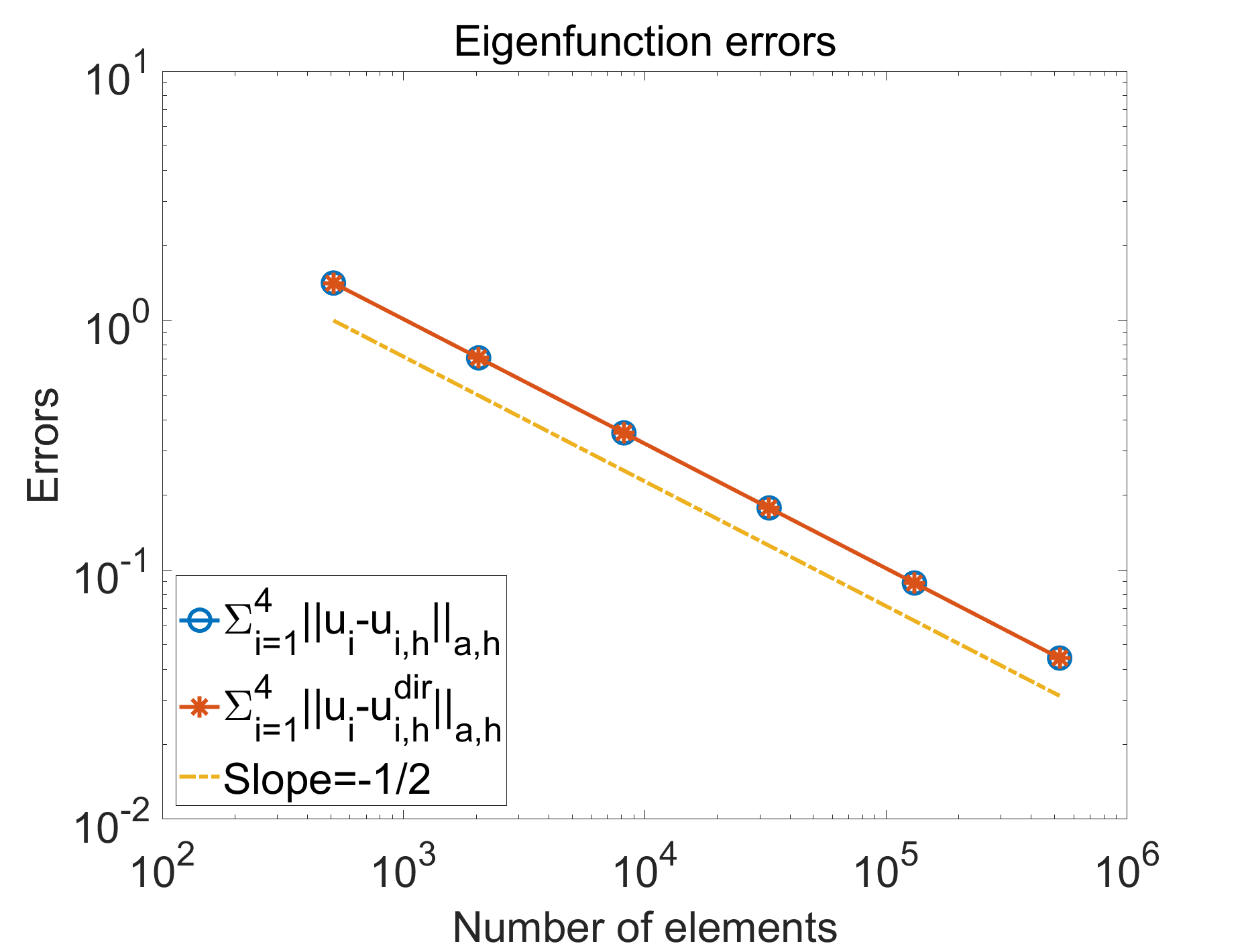}
\caption{Errors for the eigenpair approximations by our algorithms and the single level solver for the first $4$ eigenvalues $2\pi^2$, $5\pi^2$, $5\pi^2$, $8\pi^2$ and their corresponding eigenfunctions with $H=\sqrt{2}/8$.}
\label{overall}
\end{figure}

\begin{figure}[!hbt]
\centering
\includegraphics[width=6cm,height=4.5cm]{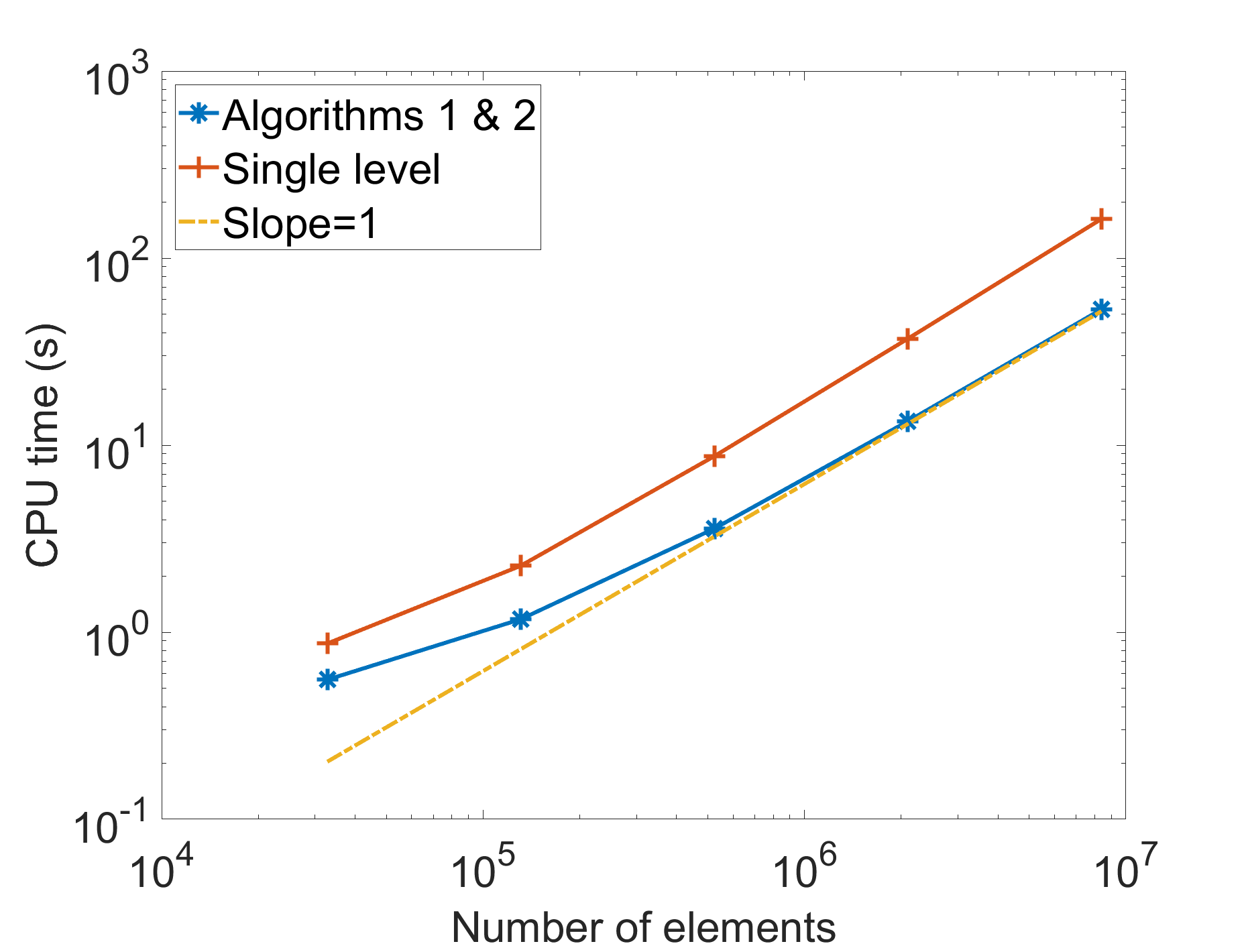}
\includegraphics[width=6cm,height=4.5cm]{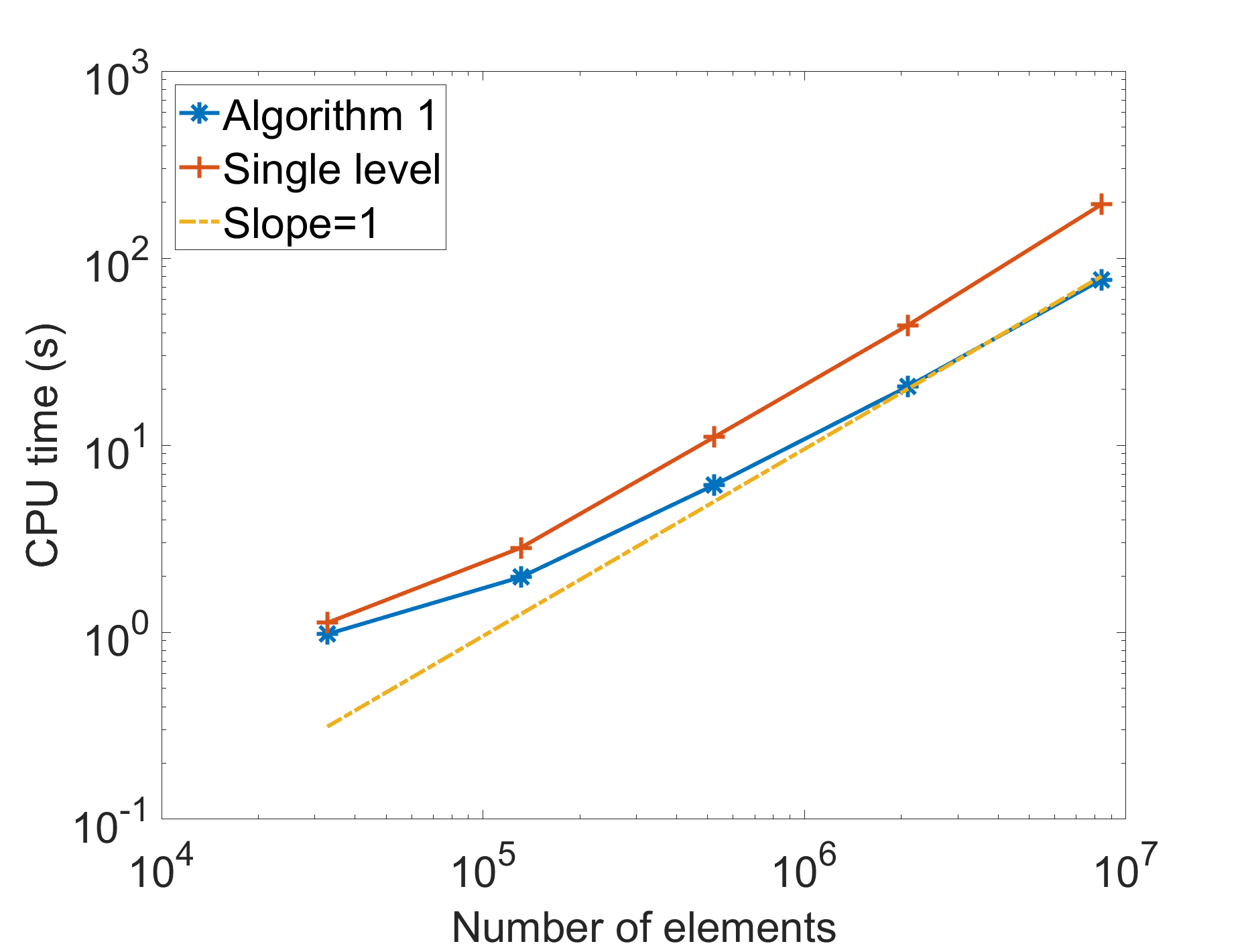}
\caption{CPU time for our algorithms and the single level solver with $H=\sqrt{2}/32$: The left subfigure shows the CPU time for the first eigenpair approximation  and the right subfigure shows the CPU time for the smallest $4$ eigenpair approximations.}
\label{CPUs}
\end{figure}

\subsection{Tests for algebraic error estimates}
In this subsection, we check the algebraic error estimates of our proposed algorithms, which is also one of the main contributions of this paper. In the following numerical example, we set the fine mesh size $h=\sqrt{2}/512$ for testing the convergence. The initial eigenfunction approximation is produced by solving the eigenvalue
problem \eqref{originproblem} on the coarse space $W_H$. Then we do the iteration steps by the augmented subspace
method defined by Algorithms \ref{alg:augk_NC} and \ref{Algorithm_1}.

In order to validate the convergence results stated in (\ref{Test_1_1})-(\ref{Test_2_0}),
we check the numerical errors corresponding to the linear finite element space $W_H$
with different sizes $H$.  The aim is to check the dependence of the convergence rate
on the mesh size $H$.

Figure \ref{Result_Coarse_Mesh} shows the convergence behaviors for the first eigenfunction by
the augmented subspace methods corresponding to the coarse mesh size $H=\sqrt{2}/8$, $\sqrt{2}/16$, $\sqrt{2}/32$ and $\sqrt{2}/64$, respectively.
The convergence rates related with $\|\cdot\|_{a,h}$ and $\|\cdot\|_b$ are $0.048503$, $0.013692$, $0.0035433$ and $0.00089758$, and $0.052093$, $0.014819$, $0.0038451$ and $0.001132$, separately.
These results show that the augmented subspace method defined by Algorithms \ref{alg:augk_NC} and \ref{Algorithm_1} have the second order convergence speed
which also validates the results (\ref{Test_1_1})-(\ref{Test_2_0}).
\begin{figure}[http!]
\centering
\includegraphics[width=6cm,height=4.5cm]{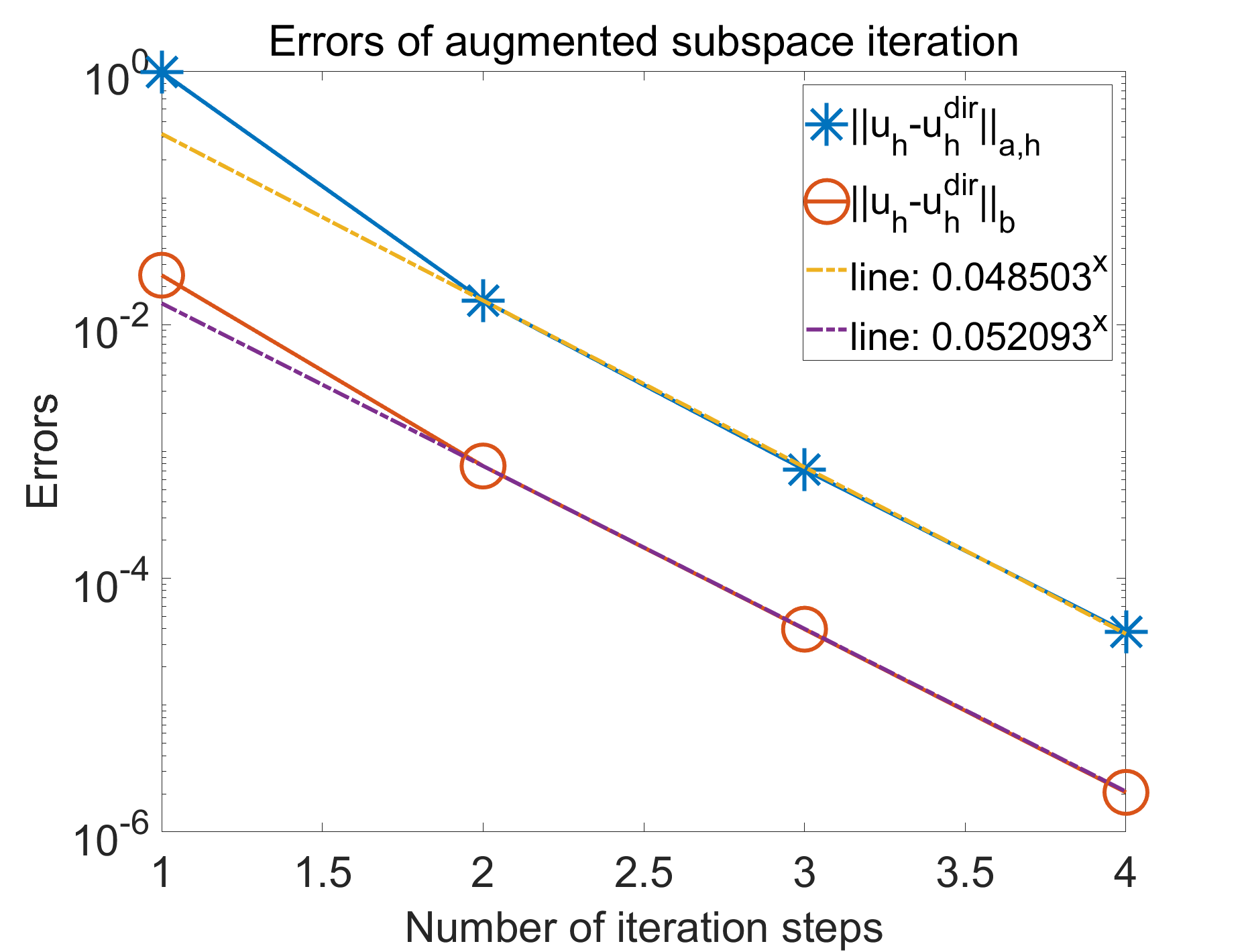}
\includegraphics[width=6cm,height=4.5cm]{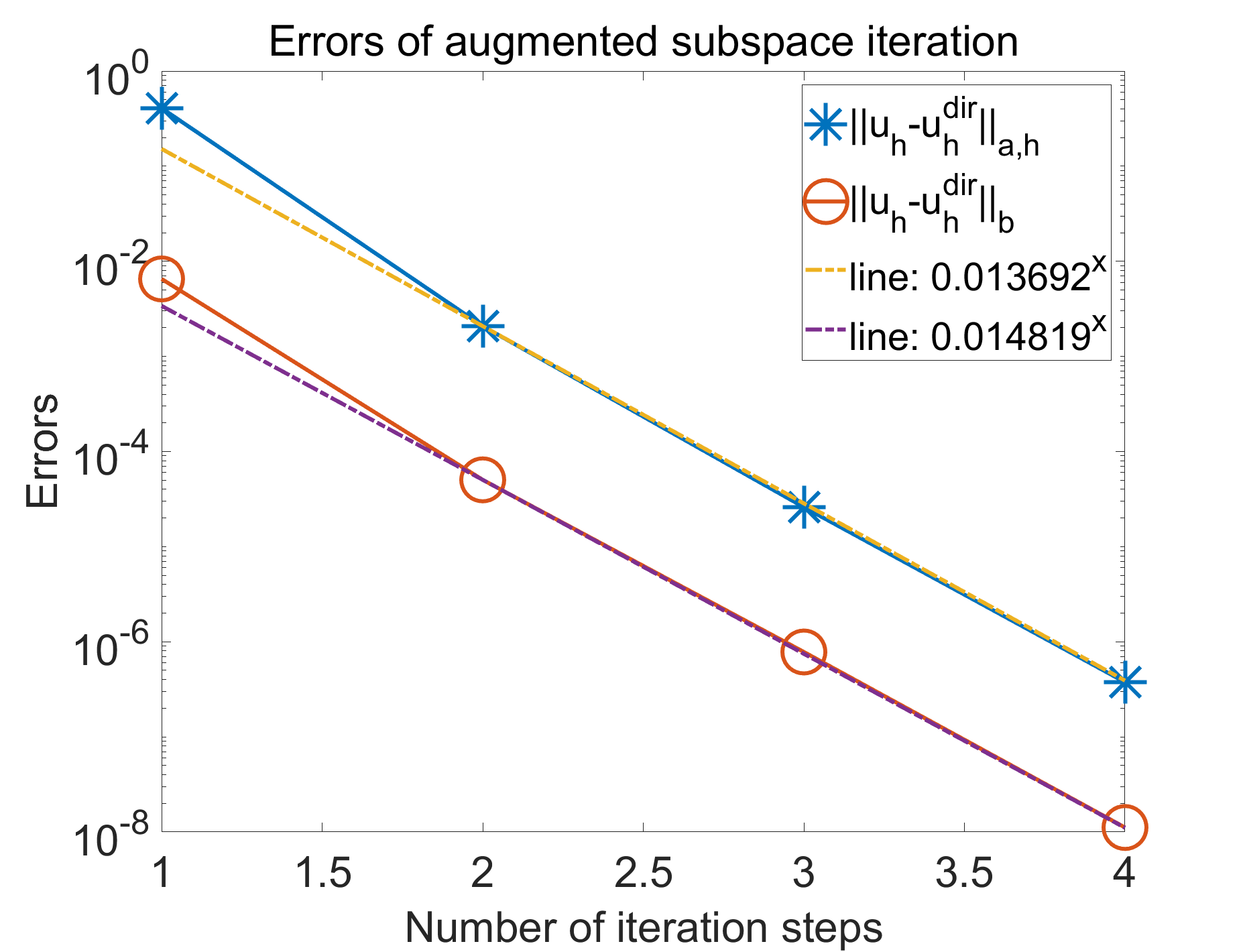}\\
\includegraphics[width=6cm,height=4.5cm]{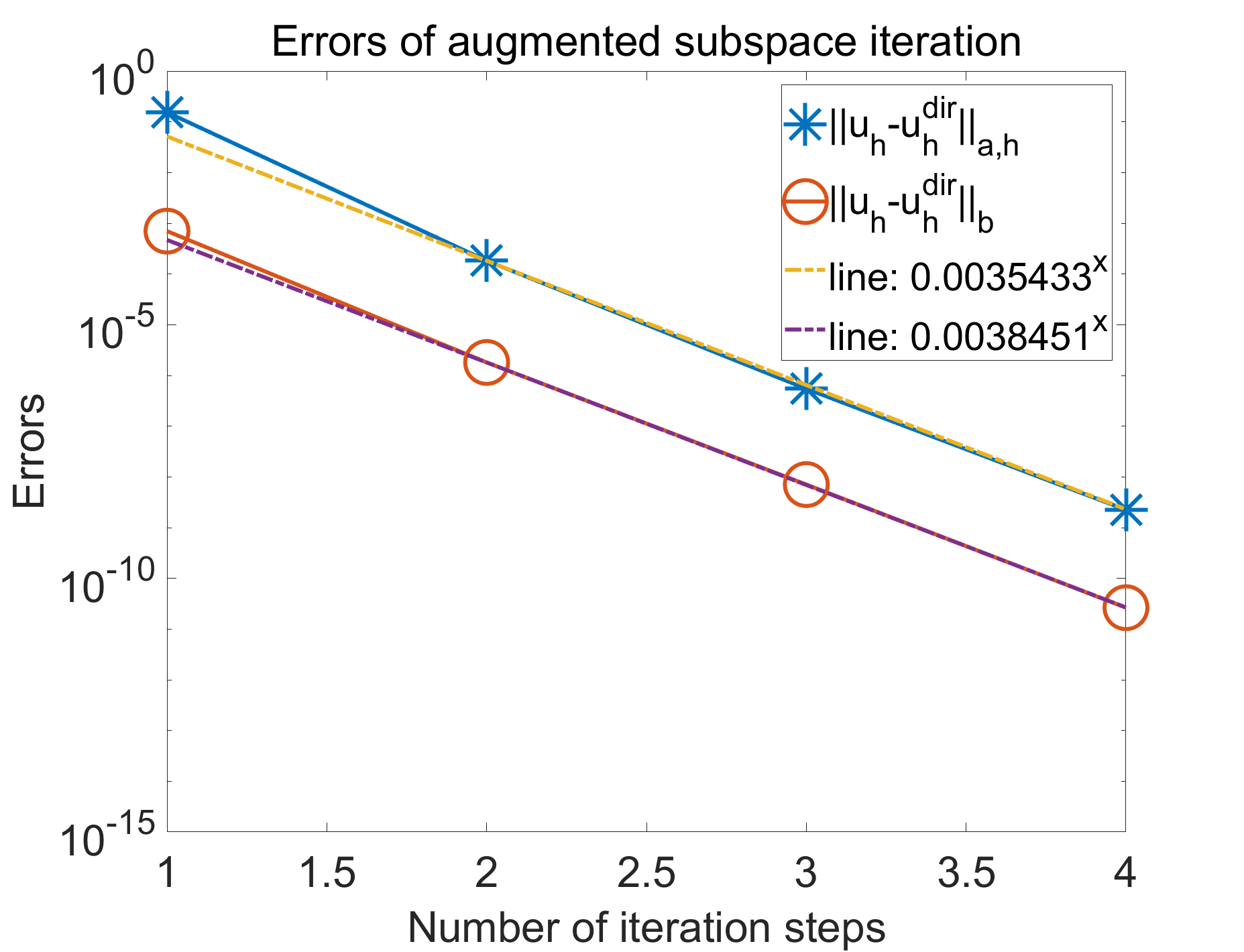}
\includegraphics[width=6cm,height=4.5cm]{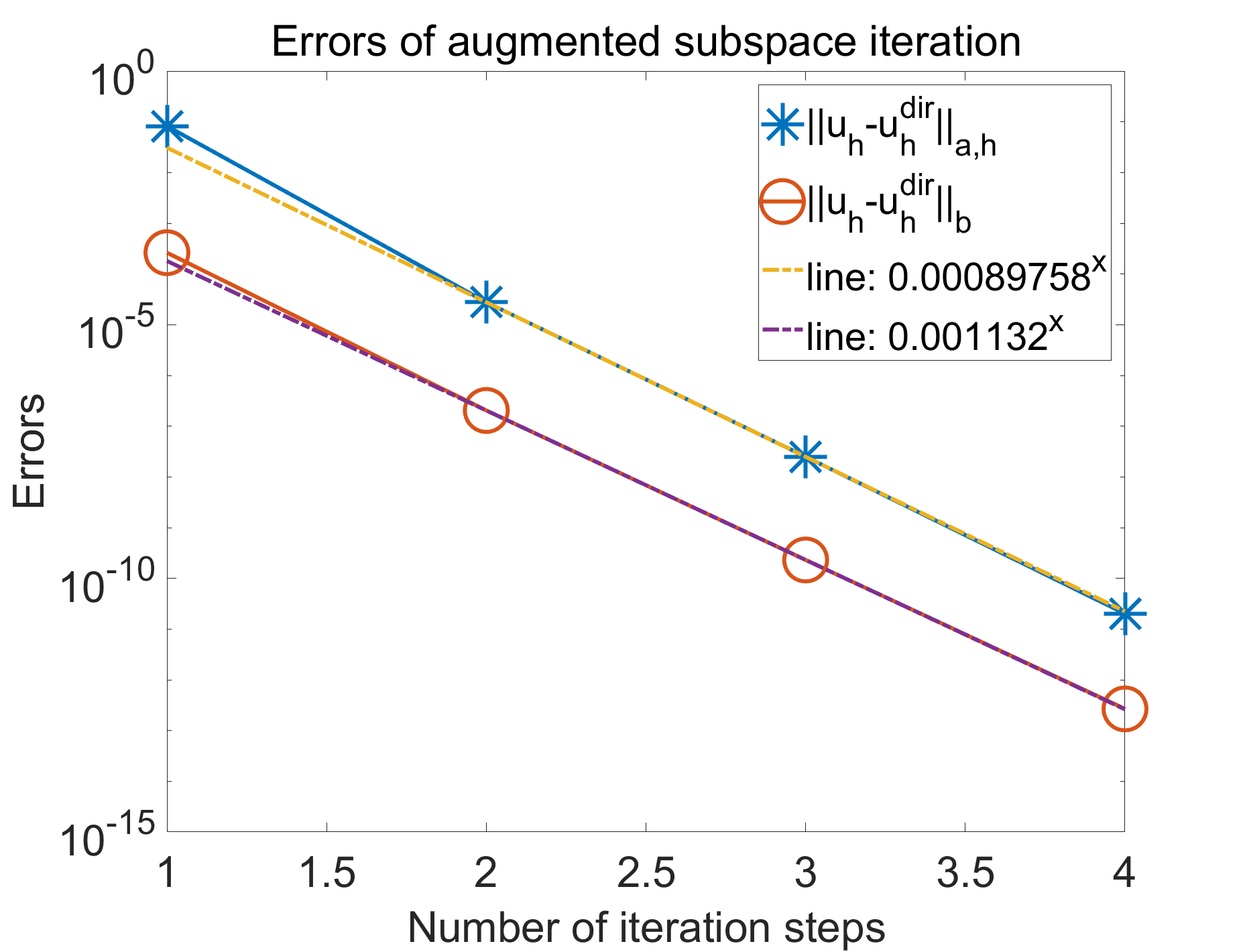}
\caption{The convergence behaviors for the first eigenfunction by Algorithm \ref{alg:augk_NC}
corresponding to the coarse mesh size $H=\sqrt{2}/8$, $\sqrt{2}/16$, $\sqrt{2}/32$ and $\sqrt{2}/64$, respectively.}\label{Result_Coarse_Mesh}
\end{figure}

Then, we check the performance of Algorithm \ref{alg:augk_NC} for computing the smallest $4$ eigenpairs.
Figure \ref{Result_Coarse_Mesh_4} shows the corresponding convergence behaviors  for the smallest $4$ eigenfunctions
by Algorithm \ref{alg:augk_NC} with the coarse space being the linear finite element space on the mesh with size $H=\sqrt{2}/8$, $\sqrt{2}/16$, $\sqrt{2}/32$ and $\sqrt{2}/64$, respectively.
Taking the $4$-th eigenfunction for example, we can find that the corresponding convergence rates are $0.52894$, $0.14163$, $0.033235$ and $0.0084715$,
which states the second order convergence speed of the method defined by Algorithm \ref{alg:augk_NC}.
Furthermore, from Figure \ref{Result_Coarse_Mesh_4}, we can find
the convergence rate for the $4$-th eigenfucntion is slower than that for the $1$-st eigenfunction which
is consistent with Theorem \ref{thm:algerr_k_NC}.
\begin{figure}[http!]
\centering
\includegraphics[width=6cm,height=4.5cm]{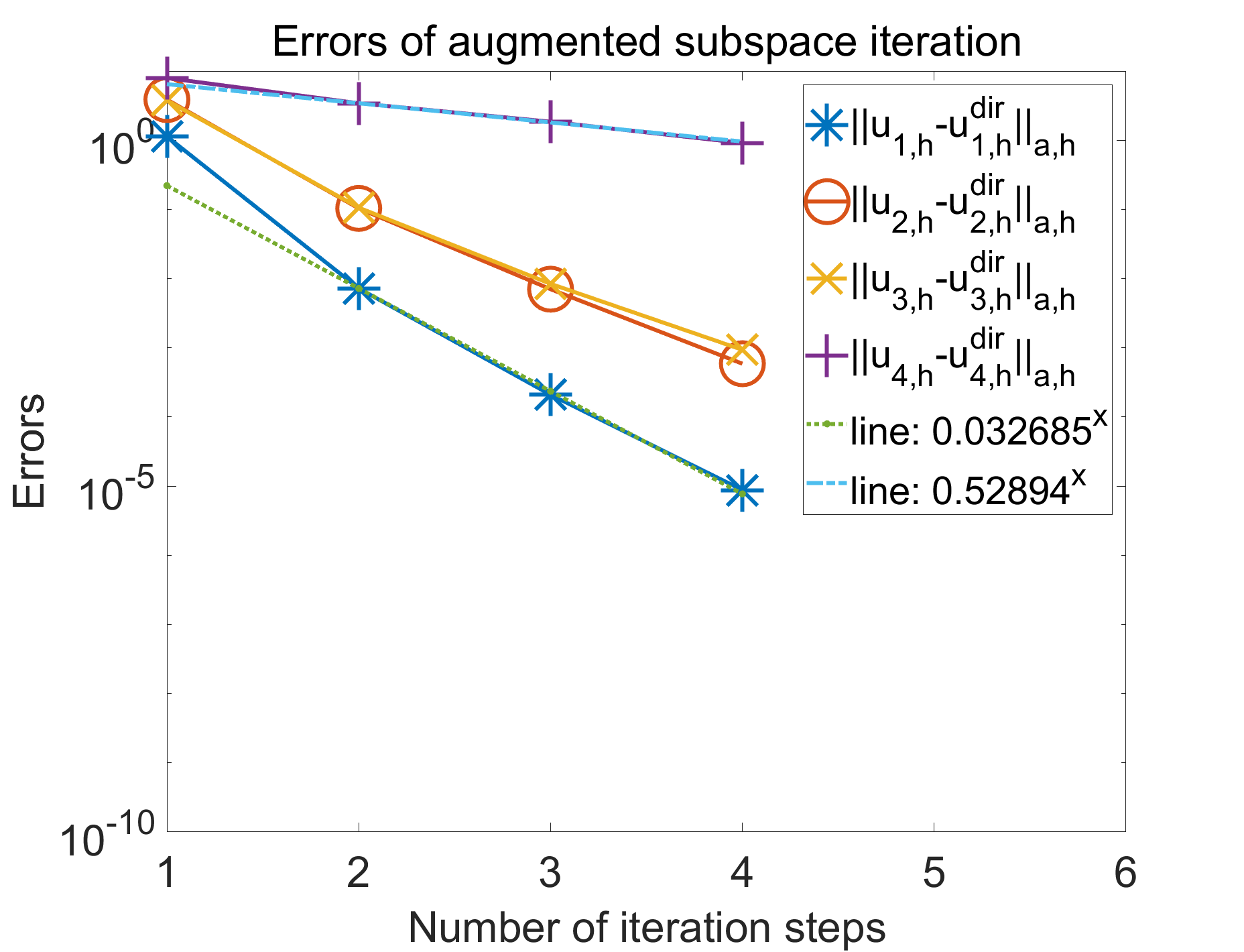}
\includegraphics[width=6cm,height=4.5cm]{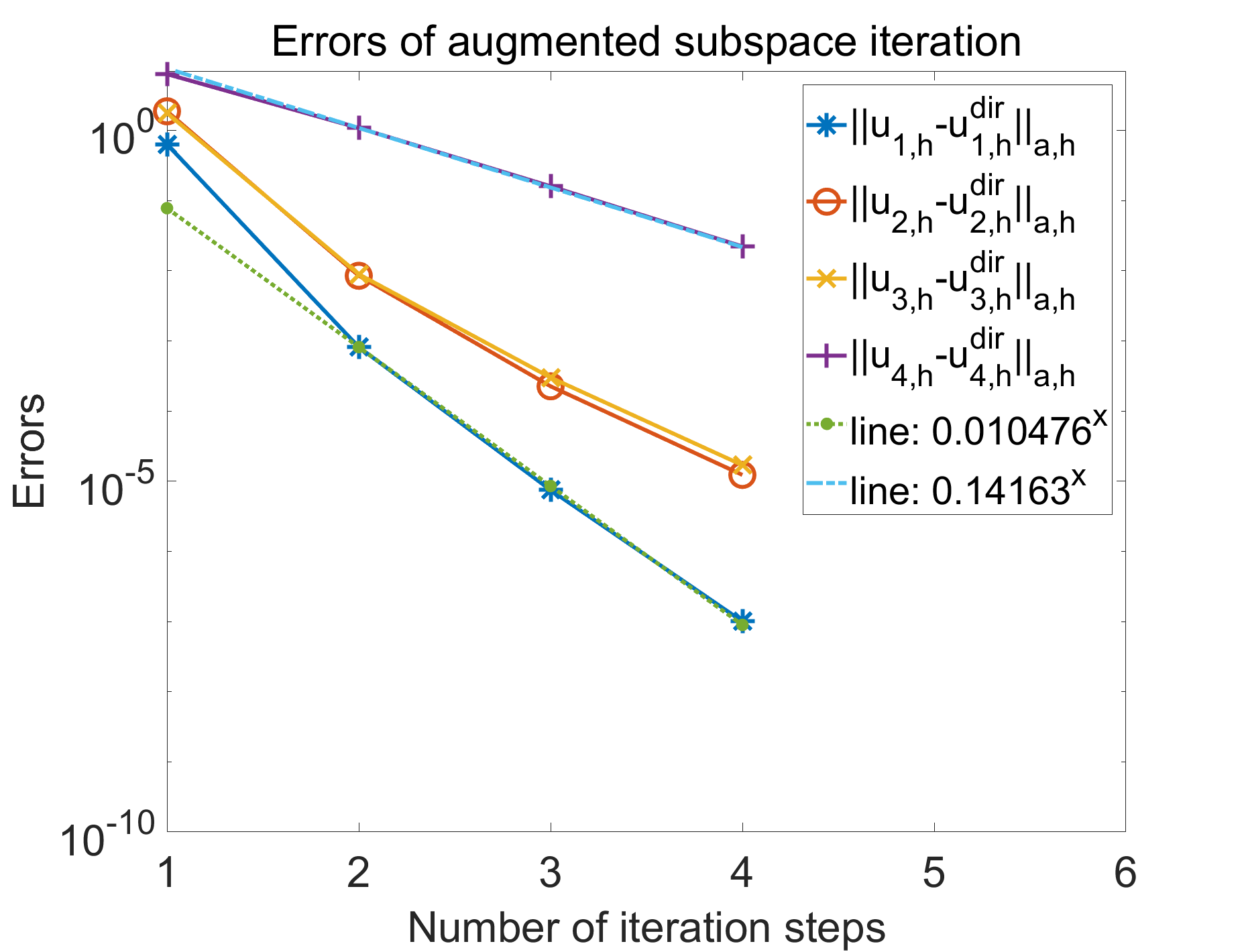}\\
\includegraphics[width=6cm,height=4.5cm]{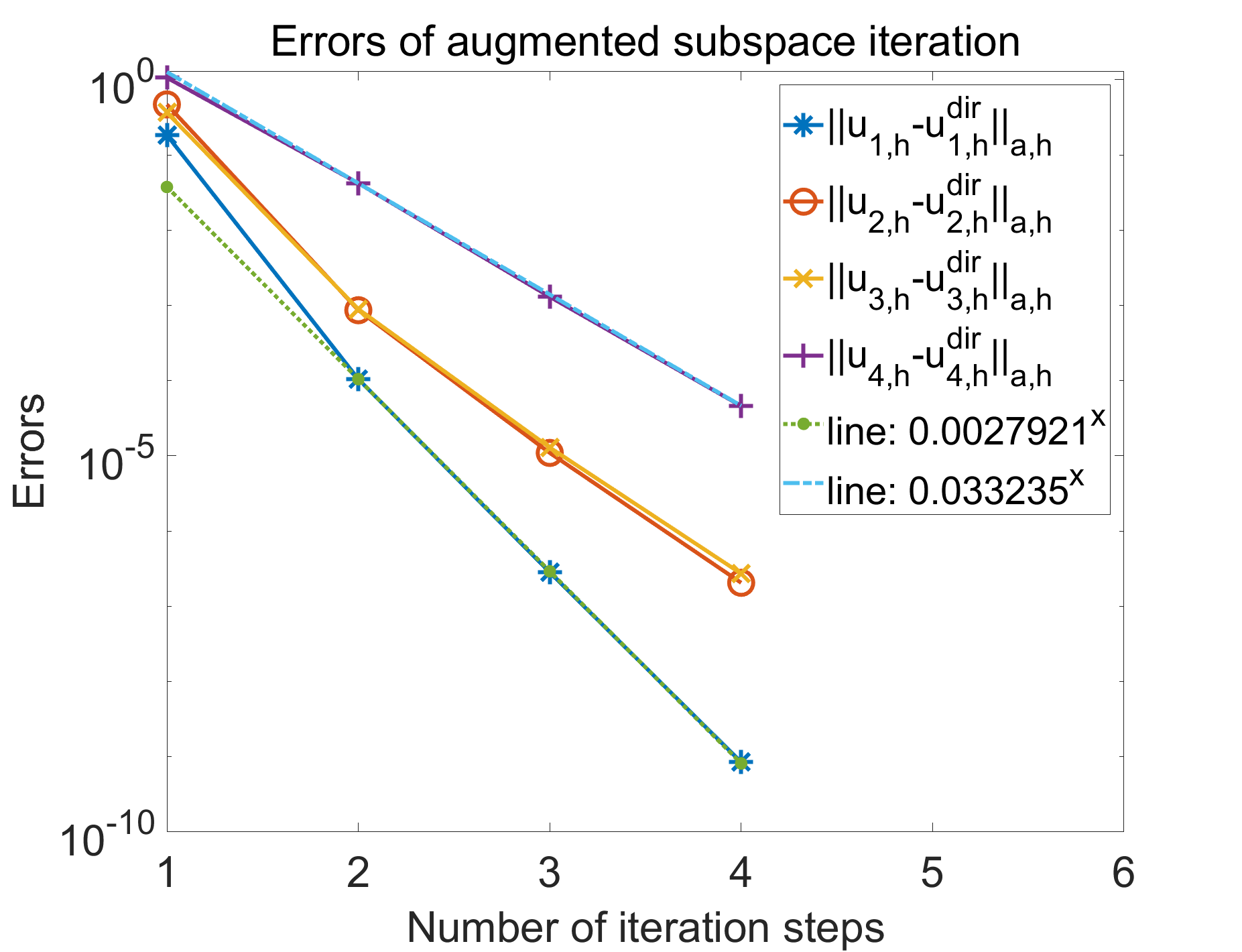}
\includegraphics[width=6cm,height=4.5cm]{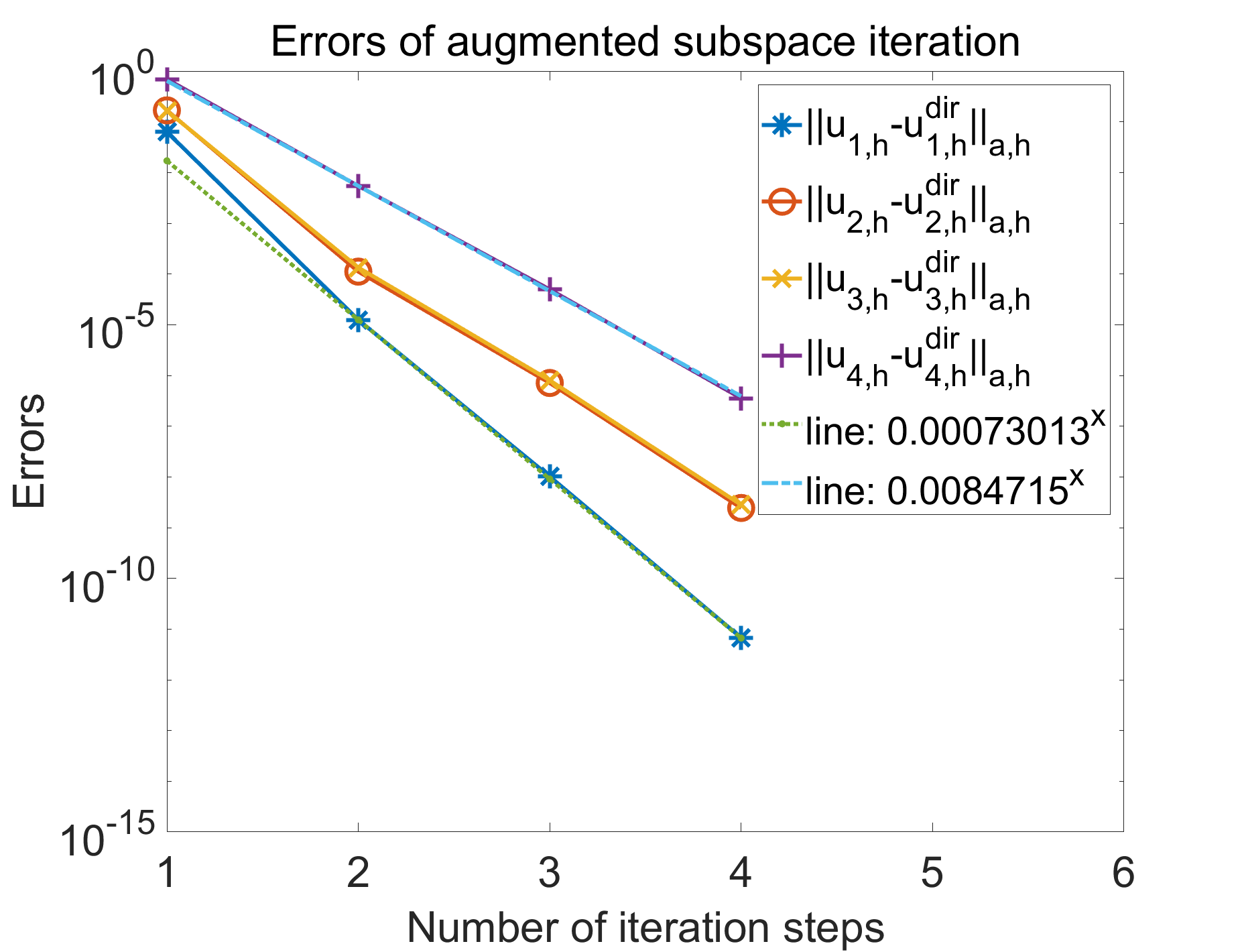}
\caption{The convergence behaviors for the smallest $4$ eigenfunctions by Algorithm \ref{alg:augk_NC}
with the coarse space being the linear finite element space on the mesh with size $H=\sqrt{2}/8$, $\sqrt{2}/16$, $\sqrt{2}/32$ and $\sqrt{2}/64$, respectively.}\label{Result_Coarse_Mesh_4}
\end{figure}

The final task is to check the performance of Algorithm \ref{Algorithm_1} for computing the only $4$-th eigenpair.
Figure \ref{Result_Coarse_Mesh_4_Only} shows the corresponding convergence behaviors  for the only $4$-th eigenfunction
by Algorithm \ref{Algorithm_1} with the coarse space being the linear finite element space on
the mesh with size $H=\sqrt{2}/8$, $\sqrt{2}/16$, $\sqrt{2}/32$ and $\sqrt{2}/64$, respectively. The convergence rates corresponding to $\|\cdot\|_{a,h}$ and $\|\cdot\|_b$ shown in
Figure \ref{Result_Coarse_Mesh_4_Only} are $0.5321$, $0.14456$, $0.033682$ and $0.0085807$, and $0.35757$, $0.12384$, $0.034523$ and $0.0089362$, separately.
These results show that the augmented subspace method defined by
Algorithm \ref{Algorithm_1} has the second order convergence speed which validates the results (\ref{Test_2_1})-(\ref{Test_2_0}).
\begin{figure}[http!]
\centering
\includegraphics[width=6cm,height=4.5cm]{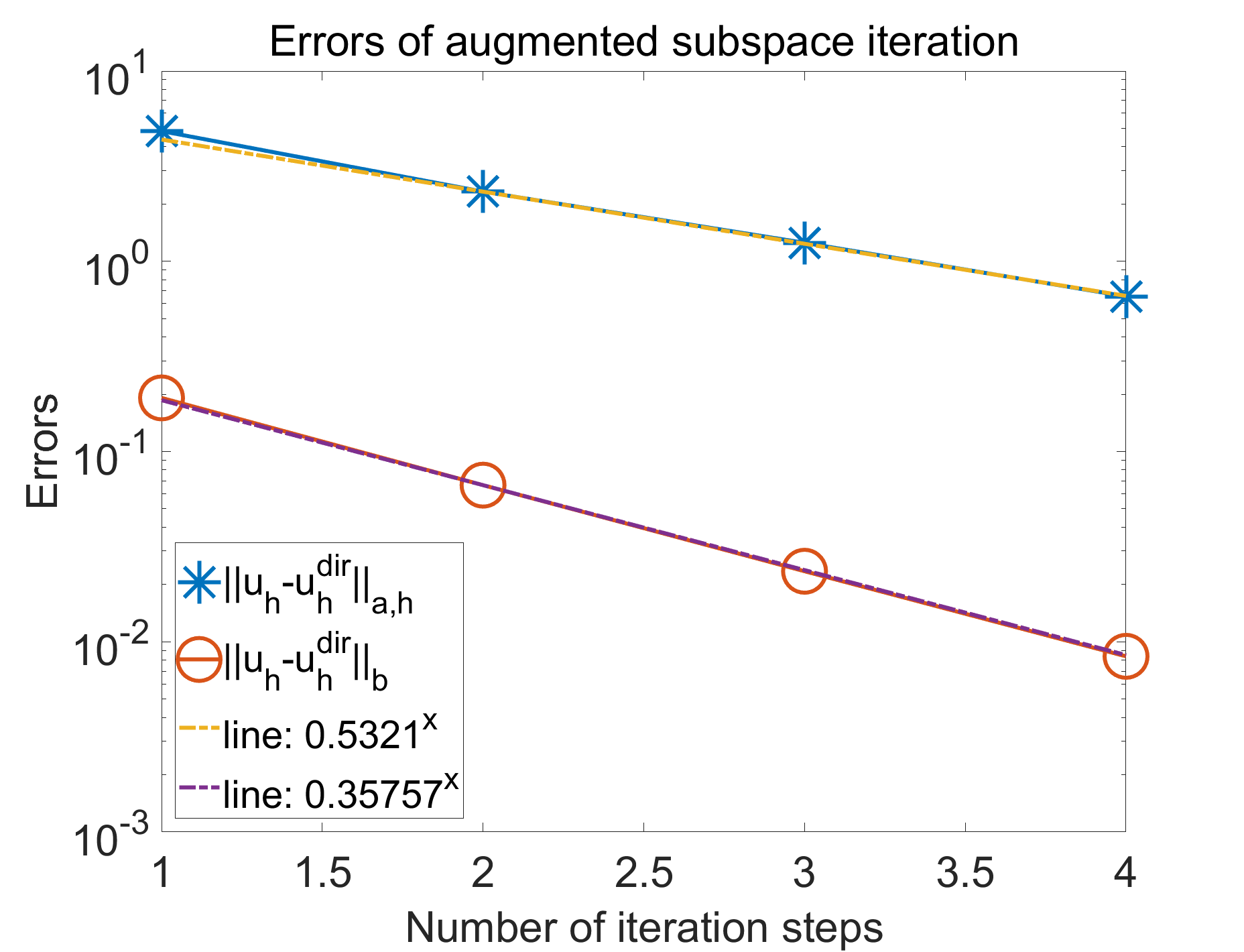}
\includegraphics[width=6cm,height=4.5cm]{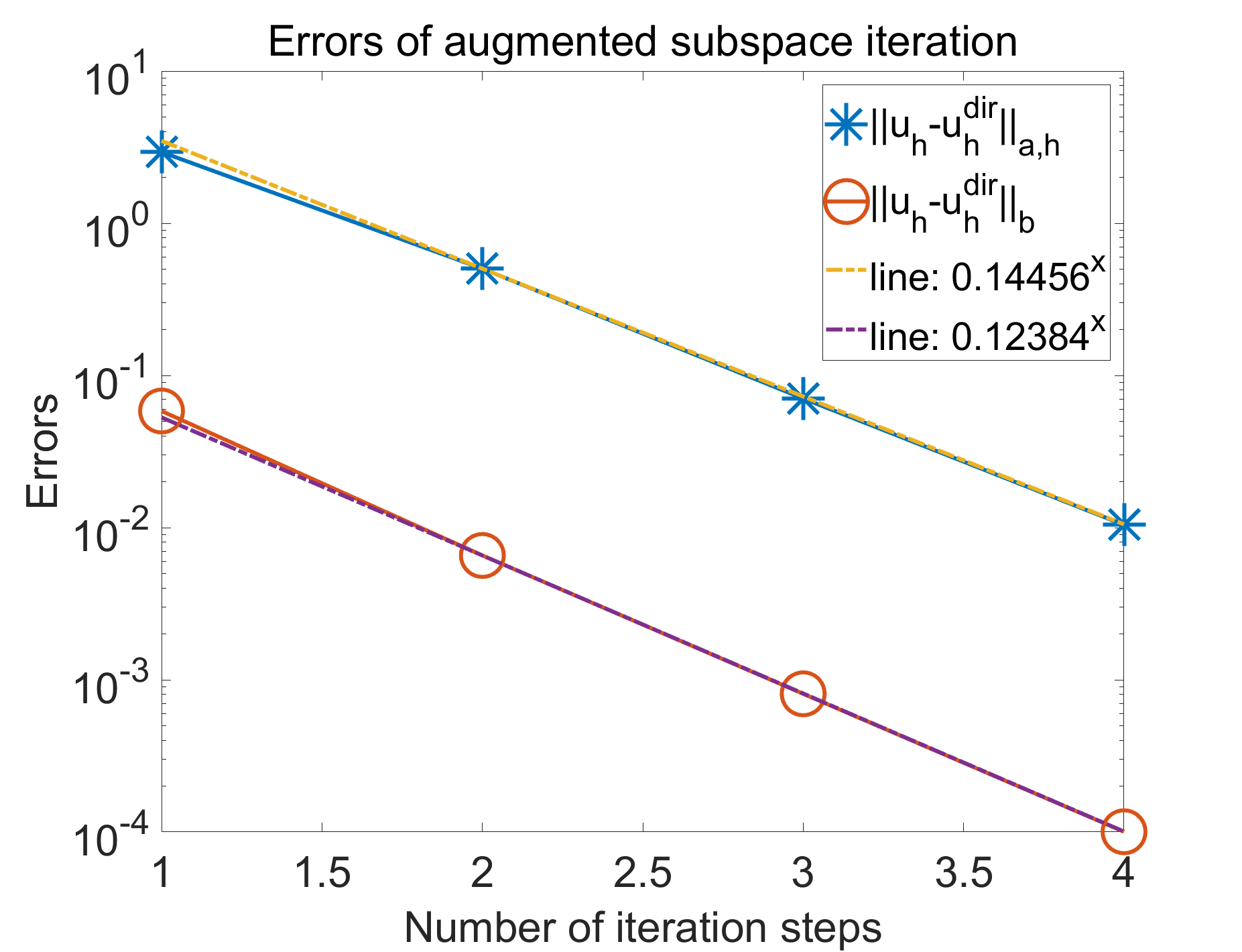}\\
\includegraphics[width=6cm,height=4.5cm]{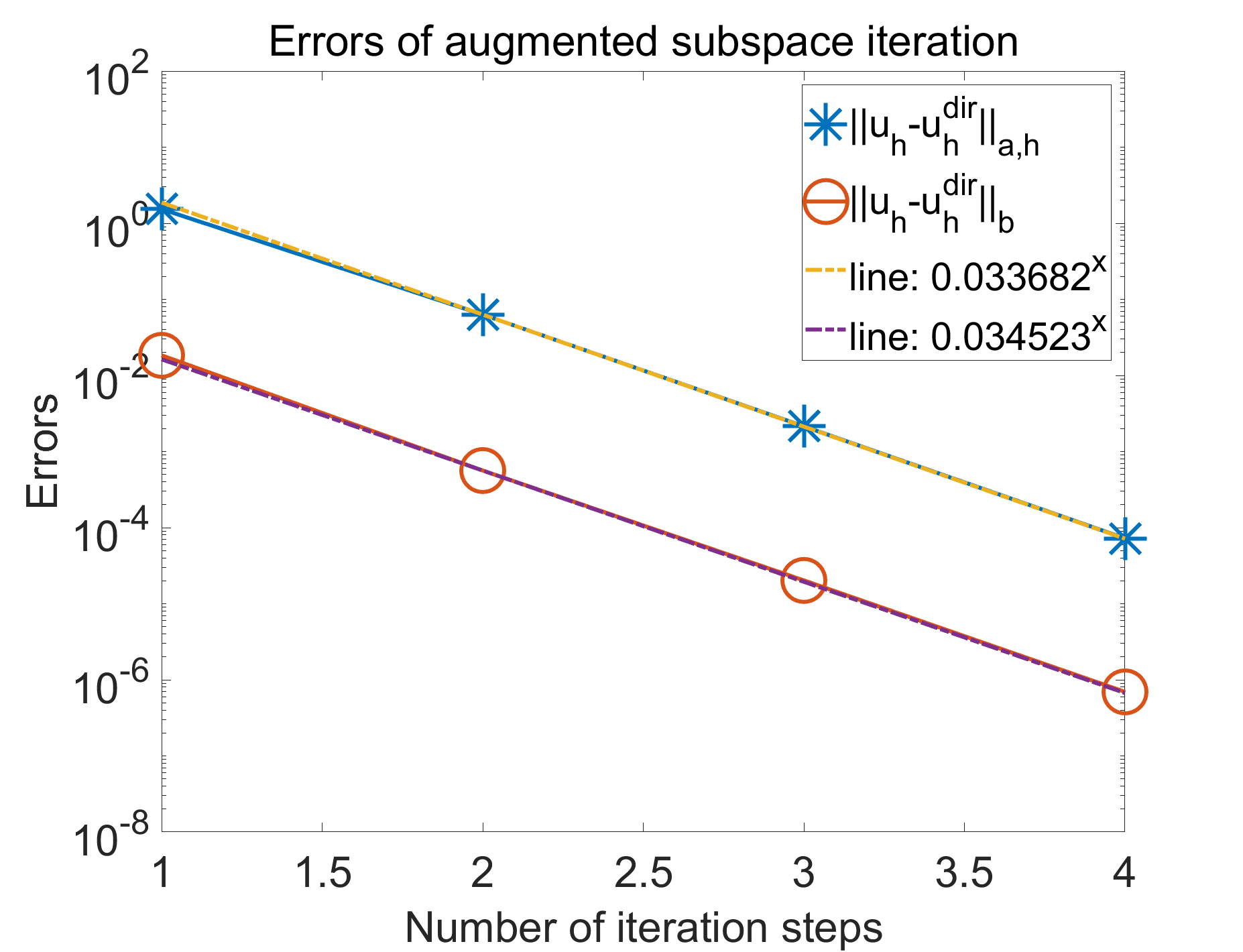}
\includegraphics[width=6cm,height=4.5cm]{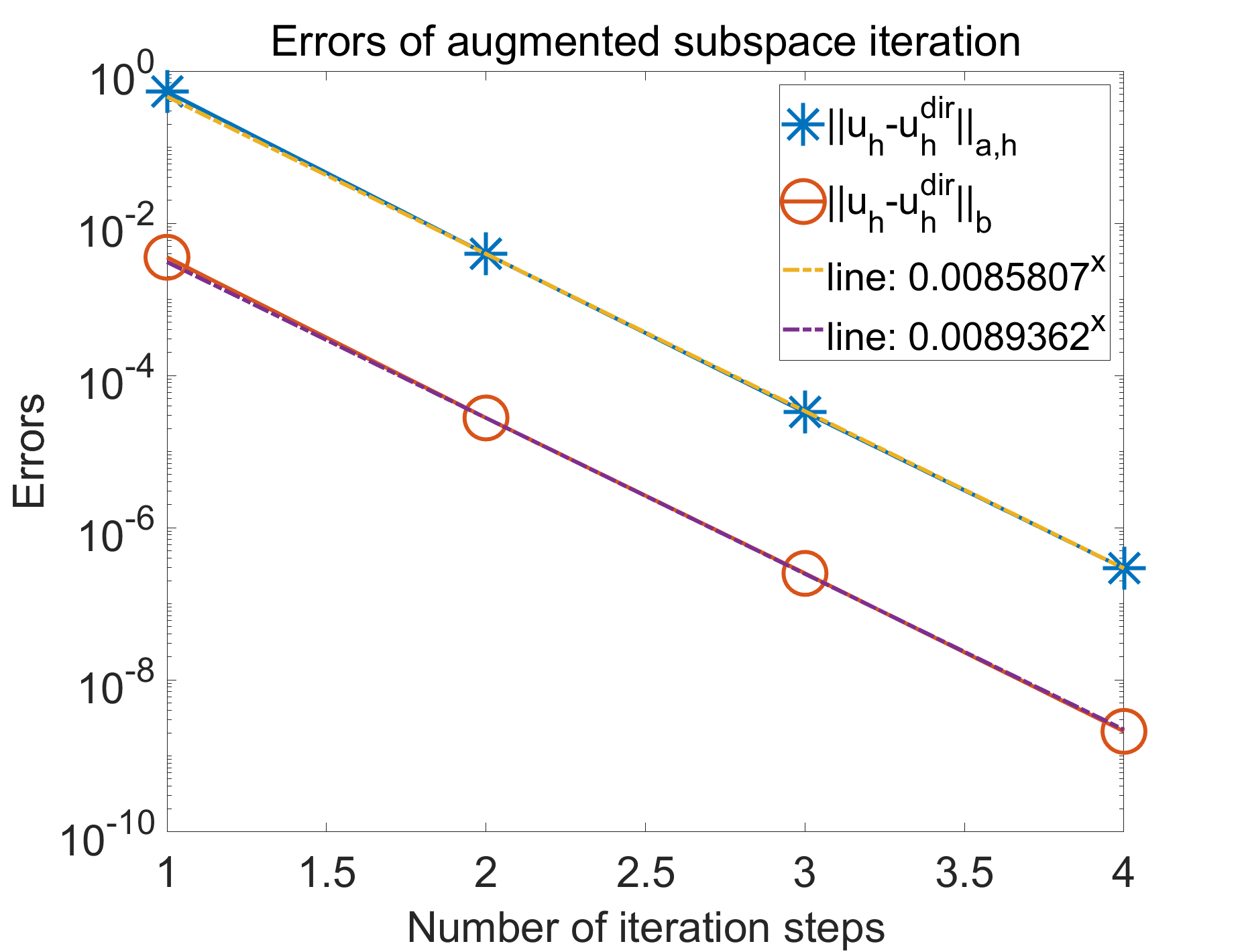}
\caption{The convergence behaviors for the only $4$-th eigenfunction by Algorithm \ref{Algorithm_1}
with the coarse space being the linear finite element space on the mesh with size $H=\sqrt{2}/8$, $\sqrt{2}/16$, $\sqrt{2}/32$ and $\sqrt{2}/64$, respectively.}\label{Result_Coarse_Mesh_4_Only}
\end{figure}

\section{Concluding remarks}
In this paper, some enhanced error estimates for the CR element based augmented subspace method are deduced for solving eigenvalue problems. Before the new estimates, the explicit error estimates for single eigenpair and multiple eigenpairs based on our defined spectral projection operators are derived, respectively. Then we prove the second order algebraic error convergence rate of the augmented subspace method. Based on the new algebraic error results, we can also
produce the corresponding sharper error estimates for the multigrid or multilevel methods
which are designed based on the augmented subspace method and the sequence of grids.

\section*{Acknowledgements}
This work was partly supported by the Beijing Natural Science Foundation (No. Z200003), the National Natural Science Foundation of China (No. 1233000214, 12301465), the National Center for Mathematics and Interdisciplinary Science, Chinese Academy of Sciences, and by the Research Foundation for Beijing University of Technology New Faculty (No. 006000514122516).

\section*{Declarations}

\subsection*{Conflict of interest}
All authors declare that they have no conflict of interest.

\subsection*{Data availability}
All data generated or analysed during the current study are available from the corresponding author on reasonable request.


\begin{thebibliography}{23}
\expandafter\ifx\csname natexlab\endcsname\relax\def\natexlab#1{#1}\fi
\expandafter\ifx\csname url\endcsname\relax
  \def\url#1{\texttt{#1}}\fi
\expandafter\ifx\csname urlprefix\endcsname\relax\def\urlprefix{URL }\fi

\bibitem{MR0450957}
Adams, R.~A., 1975. Sobolev spaces. Pure and Applied Mathematics, Vol. 65.
  Academic Press [Harcourt Brace Jovanovich, Publishers], New York-London.

\bibitem{BabuskaOsborn_1989}
Babu\v{s}ka, I., Osborn, J.~E., 1989. Finite element-{G}alerkin approximation
  of the eigenvalues and eigenvectors of selfadjoint problems. Math. Comp.
  52~(186), 275--297.

\bibitem{Bai}
Bai, Z., Demmel, J., Dongarra, J., Ruhe, A., van der Vorst, H., eds., 2000. Templates
for the solution of algebraic eigenvalue problems: a practical guide.
Society for Industrial and Applied Math., Philadelphia.

\bibitem{petsc-web-page}
Balay, S., Abhyankar, S., Adams, M.~F., Brown, J., Brune, P., Buschelman, K., Dalcin, L., Dener, A., Eijkhout, V., Gropp, W.~D., Karpeyev, D., Kaushik, D., Knepley, M.~G., May, D.~A., McInnes, L.~C., Mills, R.~T., Munson, T., Rupp, K., Sanan, P.,
Smith, B.~F., Zampini, S., Zhang, H., Zhang, H., 2019. {PETS}c {W}eb page.
\newblock https://www.mcs.anl.gov/petsc.

\bibitem{petsc-user-ref}
Balay, S., Abhyankar, S., Adams, M.~F., Brown, J., Brune, P., Buschelman, K., Dalcin, L., Dener, A., Eijkhout, V., Gropp, W.~D., Karpeyev, D., Kaushik, D., Knepley, M.~G., May, D.~A., McInnes, L.~C., Mills, R.~T., Munson, T., Rupp, K., Sanan, P.,
Smith, B.~F., Zampini, S., Zhang, H., Zhang, H., 2020. {PETS}c users
  manual. Tech. Report ANL-95/11 - Revision 3.14, Argonne National Laboratory.

\bibitem{petsc-efficient}
Balay, S., Gropp, W.~D., McInnes, L.~C., Smith, B.~F., 1997. Efficient
  management of parallelism in object oriented numerical software libraries,
  in Modern Software Tools in Scientific Computing. Arge, E., Bruaset, A.~M., Langtangen, H.~P., eds., Birkh{\"{a}}user Press, pp.~163--202.

\bibitem{BramblePasciakKnyazev}
Bramble, J.~H., Pasciak, J.~E., Knyazev, A.~V., 1996. A subspace
  preconditioning algorithm for eigenvector/eigenvalue computation. Adv.
  Comput. Math. 6~(2), 159--189 (1997).

\bibitem{MR2373954}
Brenner, S.~C., Scott, L.~R., 2008. The mathematical theory of finite element
  methods, 3rd Edition. Vol.~15 of Texts in Applied Mathematics. Springer, New
  York.

\bibitem{Chatelin}
Chatelin, F., 1983. Spectral approximation of linear operators. Computer
  Science and Applied Mathematics. Academic Press, Inc. [Harcourt Brace
  Jovanovich, Publishers], New York, with a foreword by P. Henrici, With
  solutions to exercises by Mario Ahu\'{e}s.

\bibitem{full}
Chen, H., Xie, H., Xu, F., 2016. A full multigrid method for eigenvalue
  problems. J. Comput. Phys. 322, 747--759.

\bibitem{MR0520174}
Ciarlet, P.~G., 1978. The finite element method for elliptic problems. Studies
  in Mathematics and its Applications, Vol. 4. North-Holland Publishing Co.,
  Amsterdam-New York-Oxford.

\bibitem{Conway}
Conway, J., 1990. A course in functional analysis. Springer-Verlag.

\bibitem{MR4530254}
Dang, H., Wang, Y., Xie, H., Zhou, C., 2023{\natexlab{a}}. Enhanced error
  estimates for augmented subspace method. J. Sci. Comput. 94~(2), Paper No.
  40, 24.

\bibitem{Dang}
Dang, H., Xie, H., Zhao, G., Zhou, C., 2023{\natexlab{b}}. A nonnested augmented subspace
  method for elliptic eigenvalue problems with curved interfaces. J. Sci.
  Comput. 94~(2), Paper No. 34, 20.

\bibitem{PINVIT}
D'yakonov, E.~G., Orekhov, Y.~M., 1980. Minimization of the computational labor in determining
the first eigenvalues of differential operators. Math. Notes 27, 382--391.

\bibitem{MR3395398}
Han, X., Li, Y., Xie, H., 2015. A multilevel correction method for {S}teklov
  eigenvalue problem by nonconforming finite element methods. Numer. Math.
  Theory Methods Appl. 8~(3), 383--405.

\bibitem{HongXieXu}
Hong, Q., Xie, H., Xu, F., 2018. A multilevel correction type of adaptive
  finite element method for eigenvalue problems. SIAM J. Sci. Comput. 40~(6),
  A4208--A4235.

\bibitem{Knyazev}
Knyazev, A.~V., 1998. Preconditioned eigensolvers---an oxymoron? Vol.~7. pp.
  104--123, large scale eigenvalue problems (Argonne, IL, 1997).

\bibitem{Knyazev_Lobpcg}
Knyazev, A.~V., 2001. Toward the optimal preconditioned eigensolver: locally
  optimal block preconditioned conjugate gradient method. Vol.~23. pp.
  517--541, copper Mountain Conference (2000).

\bibitem{KnyazevNeymeyr}
Knyazev, A.~V., Neymeyr, K., 2003. Efficient solution of symmetric eigenvalue
  problems using multigrid preconditioners in the locally optimal block
  conjugate gradient method. Vol.~15. pp. 38--55, tenth Copper Mountain
  Conference on Multigrid Methods (Copper Mountain, CO, 2001).

\bibitem{LiXieXuYouZhang}
Li, Y., Xie, H., Xu, R., You, C., Zhang, N., 2020. A parallel generalized
  conjugate gradient method for large scale eigenvalue problems. CCF Trans. HPC
  2, 111--122.

\bibitem{lin2012asymptotic}
Lin, Q., Xie, H., 2012. The asymptotic lower bounds of eigenvalue problems by
  nonconforming finite element methods. Math. Pract. Theory 42~(11), 219--226.

\bibitem{LinXie_MultiLevel}
Lin, Q., Xie, H., 2015. A multi-level correction scheme for eigenvalue
  problems. Math. Comp. 84~(291), 71--88.

\bibitem{SLEPc}
Roman, J.~E., Campos, C., Romero, E., Tom\v{a}s, A.. Slepc users
  manual--scalable library for eigenvalue problem computations. Tech. Report
  3.14, Universitat Polit`ecnica de Valencia, Spain.

\bibitem{Sorensen}
Sorensen, D.~C., 1997. Implicitly restarted {A}rnoldi/{L}anczos methods for
  large scale eigenvalue calculations. In: Parallel numerical algorithms
  ({H}ampton, {VA}, 1994). Vol.~4 of ICASE/LaRC Interdiscip. Ser. Sci. Eng.
  Kluwer Acad. Publ., Dordrecht, pp. 119--165.

\bibitem{Xie_JCP}
Xie, H., 2014. A multigrid method for eigenvalue problem. J.
  Comput. Phys. 274, 550--561.

\bibitem{Xie_IMA}
Xie, H., 2014. A type of multilevel method for the {S}teklov
  eigenvalue problem. IMA J. Numer. Anal. 34~(2), 592--608.

\bibitem{MR3434038}
Xie, H., 2015. A type of multi-level correction scheme for eigenvalue problems
  by nonconforming finite element methods. BIT 55~(4), 1243--1266.

\bibitem{XieZhangOwhadi}
Xie, H., Zhang, L., Owhadi, H., 2019. Fast eigenpairs computation with operator
  adapted wavelets and hierarchical subspace correction. SIAM J. Numer. Anal.
  57~(6), 2519--2550.

\bibitem{XuXieZhang}
Xu, F., Xie, H., Zhang, N., 2020. A parallel augmented subspace method for
  eigenvalue problems. SIAM J. Sci. Comput. 42~(5), A2655--A2677.

\bibitem{XuZhou}
Xu, J., Zhou, A., 2001. A two-grid discretization scheme for eigenvalue
  problems. Math. Comp. 70~(233), 17--25.

\bibitem{ZhangLiXieXuYou}
Zhang, N., Li, Y., Xie, H., Xu, R., You, C., 2021. A generalized conjugate gradient method for eigenvalue problems (in Chinese).
Scientia Sinica Mathematica 51, 1297--1320.

\end{thebibliography}

\end{document}